\newcommand\ignore[1]{}
\newtheorem{theorem}{Theorem}[section]
\newtheorem{proposition}[theorem]{Proposition}
\newtheorem{lemma}[theorem]{Lemma}
\newtheorem{remark}[theorem]{Remark}
\newcommand{\res}{\textrm{Res}}
\newcommand{\bz}{\mathbb{Z}}
\newcommand{\br}{\mathbb{R}}
\newcommand{\bc}{\mathbb{C}}
\newcommand{\bs}{\mathbb{S}}
\newcommand{\e}{\varepsilon}
\newcommand{\lr}{\longrightarrow}
\renewcommand{\hom}{\textrm{Hom}}
\begin{document}
\baselineskip=15.5pt

\title%[Common $L_0$-types in discrete series]{$L_0$-types common to a
%Borel-de Siebenthal discrete series and its associated
%holomorphic discrete series}  
[Borel-de Siebenthal and holomorphic discrete series]{Borel-de Siebenthal discrete
  series and \\  associated holomorphic discrete series}  
\author{Pampa Paul}
\author{K. N. Raghavan}
\author{Parameswaran Sankaran}
\address{The Institute of Mathematical Sciences, CIT
Campus, Taramani, Chennai 600113, India}
\email{pampa@imsc.res.in, knr@imsc.res.in, sankaran@imsc.res.in}
\dedicatory{Dedicated to the memory of D. -N. Verma} 

\subjclass[2010]{Primary: 22E46; secondary: 17B10. \\
Keywords and phrases: Discrete series,  admissibility, relative invariants, branching rule, LS-paths.}

\thispagestyle{empty}
\date{}
\begin{abstract}
Let $G_0$ be a simply connected non-compact real simple Lie group with maximal compact subgroup $K_0$. Assume that rank$(G_0)$ = rank$(K_0)$ so that $G_0$ has discrete series 
representations. If $G_0/K_0$ is Hermitian symmetric, one has a relatively simple discrete series of $G_0$, namely the holomorphic discrete series of $G_0$. Now assume that $G_0/K_0$ is not a Hermitian symmetric space. In this 
case, one has the class of Borel-de Siebenthal discrete series of $G_0$ defined in a manner analogous to the holomorphic discrete series. 
We consider a certain circle subgroup of~$K_0$ whose centralizer~$L_0$ is such that $K_0/L_0$
%We consider the centralizer in $K_0$ of a certain circle subgroup of $K_0$. It turns out that $K_0/L_0$ 
is an irreducible compact Hermitian symmetric space. Let $K_0^*$ be the dual of $K_0$ with respect to $L_0$. Then $K_0^*/L_0$ is an irreducible non-compact Hermitian symmetric space dual to $K_0/L_0$.
In this article, to each Borel-de Siebenthal discrete series of $G_0$, we will associate a holomorphic discrete series of $K_0^*$. Then we show the occurrence of infinitely many common $L_0$-types between these two discrete series under certain conditions. 
\end{abstract}
\maketitle

\noindent
\section{Introduction}
Let $G_0$ be a simply connected non-compact real simple Lie group and let $K_0$ be a maximal compact subgroup of $G_0$.  
Let $T_0\subset K_0$ be a maximal torus. Assume that 
rank$(K_0)=$rank$(G_0)$ so that $G_0$ has discrete series representations. Note that $T_0$ is a Cartan subgroup of $G_0$ as well. Also the condition rank$(K_0)$=rank$(G_0)$ implies that $K_0$ is the fixed point set of a Cartan involution of $G_0$. We shall denote by $\frak{g}_0,\frak{k}_0$, and $\frak{t}_0$ the Lie algebras of $G_0,K_0$, and $T_0$ respectively and by $\frak{g}, \frak{k}$, and $\frak{t}$ the complexifications of $\frak{g}_0,\frak{k}_0,$ and $\frak{t}_0$ respectively.  

Let $\Delta$ be the root 
system of $\frak{g}$ with respect to the Cartan subalgebra $\frak{t}$. Let $\Delta^+$ be a Borel-de Siebenthal positive system so that the set of simple roots $\Psi$ has exactly one non-compact 
root $\nu$.    We may write
$\Delta=\cup _{-2\leq i\leq 2}\Delta_i$ where $\alpha\in\Delta$ belongs to $\Delta_i$ precisely when the coefficient $n_\nu(\alpha)$ of $\nu$ in $\alpha$ when expressed as a sum of simple roots is equal to $i$; the set of compact and non-compact roots of $\frak{g}_0$ are $\Delta_0\cup \Delta_2\cup \Delta_{-2}$ and $\Delta_1\cup \Delta_{-1}$ respectively. 

Let $G$ be the simply connected complexification of $G_0$. The inclusion $\frak{g}_0\hookrightarrow \frak{g}$ defines a homomorphism $p: G_0\lr G$. Let $Q\subset G$ be the parabolic subgroup with Lie algebra $\frak{q}=\frak{l} \oplus \frak{u}_{-1}\oplus \frak{u}_{-2}$, where $\frak{u}_i =\sum_{\alpha \in \Delta_i} \frak{g}_\alpha$ ($-2\leq i\leq 2$), $\frak{g}_\alpha$  is the root space for $\alpha \in \Delta$, and $\frak{l}=\frak{t} \oplus \frak{u}_0$. Let $L$ be the Levi subgroup of $Q$; thus $Lie(L)=\frak{l}$. Then $\bar{L}_0:=p(G_0)\cap Q$ is a real form of $L$ and $L_0:=p^{-1}(\bar{L}_0)$ is the centralizer in $K_0$ of a circle subgroup of $T_0$.

Note that $G_0/L_0$ is an open orbit of the complex flag manifold $G/Q$, $K_0/L_0$ is an irreducible Hermitian symmetric space of compact type and $G_0/L_0 \lr G_0/K_0$ is a fibre bundle projection with fibre $K_0/L_0$. 

Our interest is in the situation when $G_0/K_0$ is not a Hermitian symmetric space.   
%Assume that $G_0/K_0$ is {\it not} a Hermitian symmetric space. 
This condition is equivalent to the requirement that the centre of $K_0$ is discrete. 
We want to consider in this situation the Borel-de Siebenthal discrete series of $G_0$, which was the subject of \O rsted and Wolf \cite{ow}.   This is defined analogously to holomorphic discrete series in the case when $G_0/K_0$ is a Hermitian symmetric space,  and so we first recall that definition.

If $G_0/K_0$ is a Hermitian symmetric space, then $\Delta_2$ and $\Delta_{-2}$ are empty, 
%one has a partition $\Delta=\cup _{-1\leq i\leq 1}\Delta_i$ where $\alpha\in\Delta$ belongs to $\Delta_i$ precisely when the coefficient $n_\nu(\alpha)$ of $\nu$ in $\alpha$ when expressed as a sum of simple roots is equal to $i$, 
and the set of compact and non-compact roots of $\frak{g}_0$ are $\Delta_0$ and $\Delta_1\cup \Delta_{-1}$ respectively. Note that $L_0=K_0$ in this case.
If $\gamma$ is the highest weight of an irreducible representation of $K_0$ such that $\gamma+\rho_{\frak{g}}$ is negative on $\Delta_1$, then $\gamma+\rho_{\frak{g}}$ is the Harish-Chandra parameter of a holomorphic discrete series $\pi_{\gamma+\rho_{\frak{g}}}$ of $G_0$.  The $K_0$-finite part of $\pi_{\gamma+\rho_{\frak{g}}}$ 
is described as $\oplus_{n\geq 0} E_\gamma\otimes S^n(\frak{u}_{-1})$ where $E_\gamma$ is the irreducible $K_0$-representation with highest weight $\gamma$ and $\frak{u}_{-1}=\oplus_{\alpha\in \Delta_{-1}} \frak{g}_\alpha.$  See \cite{hc1} and also \cite{repka}.

Now, turning to the situation when $G_0/K_0$ is not a Hermitian symmetric space, let
$\gamma$ be the highest
 weight of an irreducible representation $E_\gamma$ of $\bar{L}_0$ such that  $\gamma+\rho_{\frak{g}}$ is negative on $\Delta_1\cup\Delta_2$. Here $\rho_{\frak{g}}$ denotes 
half the sum of positive roots of $\frak{g}$. The Borel-de Siebenthal discrete series $\pi_{\gamma+\rho_{\frak{g}}}$ is the discrete series representation of $G_0$ for which the Harish-Chandra parameter is $\gamma+\rho_{\frak{g}}$.    Let $\mu$ be the highest root in~$\Delta^+$, let $\frak{k}_1^\mathbb{C}$ be the simple ideal of ~$\frak{k}$ containing $\frak{g}_\mu$, let
$\frak{k}_1$ be the compact real form of~$\frak{k}_1^\mathbb{C}$ contained in~$\frak{k}_0$, and let $K_1$ be 
the simple factor of~$K_0$ with Lie algebra $\frak{k}_1$.  
The $K_0$-finite part of $\pi_{\gamma+\rho_{\frak{g}}}$ is in fact 
$K_1$-admissible. 
%, which means that, when restricted to~$K_1$,  it breaks up as a direct sum of irreducibles each having finite multiplicity. 
This is a consequence a more general theorem on admissible restrictions due to Kobayashi \cite[Theorem 2.9]{kobayashi94}.   
\O rsted and Wolf \cite{ow} observe this using the description of the $K_0$-finite part of $\pi_{\gamma+\rho_{\frak{g}}}$ in terms of the Dolbeault cohomology as 
%~\cite[Theorem~2.8]{kobannals};  see \cite{ow} for an elementary proof. 
%The $K_0$-finite part can be described---see~\cite{ow}---in terms of the Dolbeault cohomology as 
$\oplus_{m\geq 0}H^s(K_0/L_0;\mathbb{E}_\gamma\otimes \mathbb{S}^m(\frak{u}_{-1}))$ where $s=\dim _\bc K_0/L_0$, $\mathbb{E}_\gamma$ and $\mathbb{S}^m(\frak{u}_{-1})$ denote the holomorphic vector bundles associated to the irreducible $L_0$-module $E_\gamma$ and the $m$-th symmetric power $S^m(\frak{u}_{-1})$ of the irreducible $L_0$-module $\frak{u}_{-1}$ respectively.  

Before proceeding further, we pause to recall here the important notion of admissibility of a representation.  Suppose that $H$ is a Lie group and that $(\pi, V_\pi)$ is a unitary representation of $H$ on a complex Hilbert space $V_\pi$.   Following 
Kobayashi \cite{kobayashi94}, we say that $\pi$ is {\it admissible} if $V_\pi$ is expressible as a Hilbert space direct sum 
$V_\pi=\hat{\oplus}_{\tau\in \hat{H}}m_\tau V_\tau$ where the sum is over the set $\hat{H}$ of all isomorphism classes of irreducible unitary representations 
$(\tau, V_\tau)$ of $H$ and $m_\tau=\dim_\bc(\hom_H(V_\tau, V_\pi))$, the multiplicity of $\tau$ in $\pi$,  is {\it finite} for all $\tau\in \hat{H}$.   If $H_1$ is a closed subgroup of $H$, we say that $(\pi, V_\pi)$ is $H_1$-{\it admissible} if the 
restriction $(\pi|_{H_1},V_\pi)$ is admissible as an $H_1$-representation. 
 
 We regard any $\bar{L}_0$ representation 
as an $L_0$-representation via the covering projection $p|_{L_0}$.  Any $L_0$-representation we consider in this paper arises from 
an $\bar{L}_0$-representation and so we shall abuse notation and simply 
write $L_0$ for $\bar{L}_0$ as well.  

R. Parthasarathy \cite{parthasarathy} obtained essentially 
the same description as above in a more general context 
that includes holomorphic and Borel-de Siebenthal 
discrete series as well as certain limits of discrete series 
representations. We give a brief description of his 
results in Appendix~2 (\S\ref{s:appendixrp}).

 Let $\Delta^\pm_0=\Delta^\pm\cap \Delta_0$.  Then $\Delta^+=\Delta^+_0\cup \Delta_1\cup \Delta_2$. 
The root system of $\frak{k}$ is $\Delta_{\frak{k}}=\Delta_0\cup \Delta_2\cup \Delta_{-2}$, 
and the induced positive system of $\Delta_{\frak{k}}$ is obtained as 
$\Delta^+_{\frak k}=\Delta_0^+\cup\Delta_2$. 
 
Let $(K_0^*,L_0)$ denote the Hermitian symmetric pair dual to the pair $(K_0,L_0)$.  
The set of non-compact roots in $\Delta_{\frak{k}}^+$ equals $\Delta_2$ with respect to the real form $Lie(K_0^*)$ of $\frak{k}$. If $\gamma+\rho_{\frak{g}}$ is the Harish-Chandra parameter of a Borel-de Siebenthal discrete series $\pi_{\gamma+\rho_{\frak{g}}}$ of $G_0$, then the same parameter $\gamma$ determines a holomorphic discrete series of $K_0^*$ with Harish-Chandra parameter $\gamma+\rho_{\frak{k}}$, denoted $\pi_{\gamma+\rho_{\frak{k}}}$. See \S 4. It is a natural question to ask which 
$L_0$-types are common to the Borel-de Siebenthal discrete series $\pi_{\gamma+\rho_{\frak{g}}}$ and the corresponding holomorphic discrete series 
$\pi_{\gamma+\rho_{\frak{k}}}$. 

We shall answer this 
question completely when $\frak{k}_1 \cong \frak{su}(2)$, the so-called quaternionic case. See Theorem \ref{main}. In the non-quaternionic case, we obtain complete  
results assuming that 
(i) the longest element of the Weyl group of $K_0$ preserves 
$\Delta_0$, that is, $K_0^*/L_0$ is of tube type,   and
(ii) there exists a non-trivial one dimensional 
$L_0$-subrepresentation in the symmetric algebra $S^*(\frak{u}_{-1})$.
See Theorem \ref{nonquaternionic} below.
The only Hermitian symmetric spaces that occur as~$K_0^*/L_0$ in our context and are of tube type are: $SO^*(4m)/U(2m)$, $SO_0(2,2m)/SO(2)\times SO(2m)$, $Sp(m,\mathbb{R})/U(m)$.

Note that condition~(i) is  
trivially satisfied in the quaternionic case.   
 The existence of 
non-trivial one-dimensional $L_0$-submodule in the symmetric algebra $S^*(\frak{u}_{-1})$ greatly simplifies the task of detecting occurrence of common $L_0$-types. The classification of 
Borel-de Siebenthal positive systems for which 
such one dimensional exist has been carried out 
by \O rsted and Wolf \cite[\S4]{ow}.   

We now state the main results of this paper.

\begin{theorem}\label{main}
We keep the above notations.   Suppose that $Lie(K_1)\cong \frak{su}(2)$. If $\frak{g}_0=\frak{so}(4,1)$ or $\frak{sp}(1,l-1), l>1$, then 
there
are at most finitely many $L_0$-types  common to $\pi_{\gamma+\rho_{\frak{g}}}$ and $\pi_{\gamma+\rho_{\frak{k}}}$.  Moreover, if $\dim E_\gamma=1$ then there 
are no common $L_0$-types. 

Suppose that $\frak{g}_0\neq \frak{so}(4,1)$ or $\frak{sp}(1,l-1), l>1$. Then each $L_0$-type in the holomorphic discrete series $\pi_{\gamma+\rho_{\frak{k}}}$ occurs in the Borel-de Siebenthal discrete series  $\pi_{\gamma+\rho_{\frak{g}}}$  
with {\em infinite} multiplicity. 
\end{theorem}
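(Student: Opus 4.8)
The plan is to write down the explicit $L_0$-decompositions of both representations and reduce the count of common $L_0$-types to a question about $M$-invariants on a symplectic vector space. I will use the structural features of the quaternionic case: since $\frak{k}_1\cong\frak{su}(2)$ is the unique simple factor of $\frak{k}$ meeting the non-compact part of $\frak{k}$, one has $\Delta_2=\{\mu\}$, hence $K_0/L_0\cong\bp^1$ and $s=\dim_\bc K_0/L_0=1$, and $L_0=T_1\cdot M$, where $T_1\subset K_1$ is a circle and $M$ is the product of the remaining simple factors of $K_0$, all contained in $L_0$. Writing $\omega$ for the fundamental weight of $\frak{k}_1$ and using $[\frak{u}_1,\frak{u}_1]\subseteq\frak{u}_2=\frak{g}_\mu$, the graded pieces decompose as $K_1\times M$-modules as $\frak{u}_{\pm1}\cong\bc_{\pm\omega}\otimes W$ and $\frak{u}_{\pm2}\cong\bc_{\pm2\omega}$, where $W$ is an $M$-module equipped with the $M$-invariant symplectic form coming from the bracket (so $W\cong W^*$ as $M$-modules). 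Set $c_0:=\langle\gamma,\mu^\vee\rangle$, which satisfies $c_0\le-2$ because $\gamma+\rho_{\frak{g}}$ is negative on $\Delta_2$, and let $F:=E_\gamma|_M$.

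First I would record the two pictures over $L_0$. Restricting $\pi_{\gamma+\rho_{\frak{k}}}$, the decomposition $\oplus_{n\ge0}E_\gamma\otimes S^n(\frak{u}_{-2})$ and the one-dimensionality of $\frak{u}_{-2}$ give exactly the multiplicity-free family of $L_0$-types with $T_1$-weight $c_0-2n$ and $M$-part $F$ ($n\ge0$). On the other side I would push each bundle $\mathbb{E}_\gamma\otimes\mathbb{S}^m(\frak{u}_{-1})$ through Borel--Weil--Bott on $\bp^1$. As an $L_0$-module $E_\gamma\otimes S^m(\frak{u}_{-1})$ is $T_1$-pure of weight $c_0-m$ with $M$-part $F\otimes S^m(W)$, and since $c_0-m\le-2$ for all $m\ge0$, Bott's theorem (with $M$ acting on the multiplicity space) yields
\begin{equation*}
H^{1}\!\bigl(K_0/L_0;\,\mathbb{E}_\gamma\otimes\mathbb{S}^m(\frak{u}_{-1})\bigr)\;\cong\;V_{(m-c_0-2)\omega}\otimes\bigl(F\otimes S^m(W)\bigr)
\end{equation*}
as $K_1\times M$-modules, where $V_{k\omega}$ denotes the irreducible $\SU(2)$-module of highest weight $k\omega$.

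Next I would compare the two. Restricting the displayed module to $L_0$ and matching it against the holomorphic type $(c_0-2n,F)$, the weight $c_0-2n$ occurs among the $T_1$-weights of $V_{(m-c_0-2)\omega}$, each with multiplicity one, exactly when $m$ is even and $m\ge2n+2$, while $F$ occurs in $F\otimes S^m(W)$ with multiplicity $a_m:=\dim\hom_M\!\bigl(F,\,F\otimes S^m(W)\bigr)=\dim\bigl(\mathrm{End}(F)\otimes S^m(W)\bigr)^M$. Because the $K_0$-finite part of $\pi_{\gamma+\rho_{\frak{g}}}$ is the direct sum over $m$, the total multiplicity of the holomorphic $L_0$-type $(c_0-2n,F)$ in $\pi_{\gamma+\rho_{\frak{g}}}$ equals $\sum_{m\ \mathrm{even},\,m\ge2n+2}a_m$. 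Thus everything reduces to the growth of $a_m$. Viewing $\oplus_m\bigl(\mathrm{End}(F)\otimes S^m(W)\bigr)^M$ as a finitely generated graded module over $\bc[W]^M=S^*(W)^M$ which contains the identity of $F$ in degree $0$, I get that $a_m>0$ for infinitely many $m$ precisely when $\bc[W]^M\ne\bc$, i.e.\ when $M$ has no dense orbit on $W$; and if so, taking even powers of a nonconstant homogeneous invariant makes $a_m\ge1$ for infinitely many even $m$.

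Finally I would run the dichotomy. For $\frak{g}_0=\frak{sp}(1,l-1)$ (which contains $\frak{so}(4,1)\cong\frak{sp}(1,1)$) one has $M=\Sp(l-1)$ and $W=\bc^{2(l-1)}$ the standard representation, on which $M$ acts with dense orbit; here each $S^m(W)$ is irreducible, so $a_m$ is the multiplicity of the irreducible $S^m(W)$ in the fixed module $\mathrm{End}(F)$ and hence vanishes for all large $m$, giving at most finitely many common $L_0$-types. When $\dim E_\gamma=1$, $F$ is trivial, $a_m=\delta_{m,0}$, and the constraint $m\ge2n+2$ leaves no common type at all. In every remaining quaternionic case $M$ has a nonconstant invariant on $W$: the symplectic form exhibits $M$ as a proper subgroup of $\Sp(W)$ admitting an invariant of positive degree -- for example, for $\frak{so}(4,n)$ with $n\ge3$ one has $M=\SU(2)\times\SO(n)$, $W=\bc^2\otimes\bc^n$, and the $\SO(n)$-quadratic form applied to the $\SU(2)$-invariant brackets is such an invariant -- so $\bc[W]^M\ne\bc$, forcing $a_m\ge1$ in infinitely many even degrees and hence infinite multiplicity for every holomorphic $L_0$-type. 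The main obstacle is exactly this last input: one must verify, over the classification of quaternionic Borel--de Siebenthal data, that $\bc[W]^M=\bc$ holds only for the real-rank-one forms $\frak{sp}(1,l-1)$, equivalently that the pair $(M,W)$ is non-prehomogeneous in all other cases.
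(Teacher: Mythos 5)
Your proposal is correct and follows essentially the same route as the paper: both decompose $\pi_{\gamma+\rho_{\frak{k}}}$ and the Borel--Weil--Bott description of $(\pi_{\gamma+\rho_{\frak{g}}})_{K_0}$ over $L_0=L_1\times L_2$ on $Y=\mathbb{P}^1$, reduce the comparison to the same two conditions (your weight condition on the $\frak{su}(2)$-factor is the paper's condition (b), and your multiplicity $a_m$ is its condition (a)), and then dichotomize according to whether $S^*(\frak{u}_{-1})$ has a nonconstant relative invariant, using irreducibility of $S^m(W)$ for $Sp(l-1)$ in the exceptional cases. The one input you flag as still needing a case-by-case check --- that $\bc[W]^M=\bc$ only for $\frak{so}(4,1)$ and $\frak{sp}(1,l-1)$ --- is exactly what the paper imports from \O rsted--Wolf via Proposition \ref{relativeinvariants} (your ``proper subgroup of $Sp(W)$'' heuristic is not by itself a proof, but you acknowledge this).
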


The cases $G_0=SO(4,1), Sp(1,l-1)$ 
are exceptional among the quaternionic 
cases in that these are precisely the cases for which  prehomogeneous space $(L,\frak{u}_{1})$ has no (non-constant) relative invariants---equivalently 
$S^m(\frak{u}_{-1}), m\geq 1,$ has no one-dimensional 
$L_0$-subrepresentation.   In the non-quaternionic case, 
we have the following result.

\begin{theorem}\label{nonquaternionic}
With the above notations, suppose that (i) $w_{\frak{k}}^0(\Delta_0)=\Delta_0$ where $w_{\frak{k}}^0$ is the longest 
element of the Weyl group of $K_0$ (equivalently, the Hermitian symmetric space~$K_0^*/L_0$ is of tube type), and, (ii)  
there exists a 
$1$-dimensional $L_0$-submodule in $S^m(\frak{u}_{-1})$ for some $m\geq 1$.  
Then there are infinitely many $L_0$-types common to $\pi_{\gamma+\rho_{\frak{g}}}$, $\pi_{\gamma+\rho_{\frak{k}}}$ and occurring in $\pi_{\gamma+\rho_{\frak{g}}}$ with infinite multiplicity.  Moreover, if $\dim E_\gamma=1$,  then every $L_0$-type occurring in $\pi_{\gamma+\rho_{\frak{k}}}$ occurs
in $\pi_{\gamma+\rho_{\frak{g}}}$ with infinite multiplicity.
\end{theorem}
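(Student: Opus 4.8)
The plan is to compare the two series through their $L_0$-type decompositions and to let the two relative invariants supplied by the hypotheses generate the common types. First I would record the $L_0$-spectrum of the holomorphic discrete series $\pi_{\gamma+\rho_{\frak k}}$ of $K_0^*$. Exactly as in the Hermitian case recalled above, now applied to the pair $(K_0^*,L_0)$ whose non-compact roots are $\Delta_2$, its $L_0$-finite part is $\bigoplus_{n\ge 0}E_\gamma\otimes S^n(\frak u_{-2})$. Under the tube-type hypothesis (i) the Hua--Kostant--Schmid decomposition makes $\bigoplus_n S^n(\frak u_{-2})$ multiplicity free, with $L_0$-types indexed by signatures $a_1\ge\cdots\ge a_r\ge 0$ ($r$ the real rank of $K_0^*/L_0$), and the determinant relative invariant $\det\in S^r(\frak u_{-2})$ realizes the shift $a_i\mapsto a_i+1$. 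Thus every $L_0$-type of $\pi_{\gamma+\rho_{\frak k}}$ is obtained from finitely many primitive types by tensoring with the character $\chi_{\det}$ of $\det$.

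Next I would extract the $L_0$-spectrum of the Borel--de Siebenthal series from its Dolbeault model $\bigoplus_{m\ge0}H^s(K_0/L_0;\mathbb E_\gamma\otimes\mathbb S^m(\frak u_{-1}))$. Decomposing each fibre $E_\gamma\otimes S^m(\frak u_{-1})$ into irreducible $L_0$-modules $V^{L_0}_\delta$ and applying Bott--Borel--Weil and Kostant on the compact Hermitian space $K_0/L_0$, the top degree $H^s$ receives the single $K_0$-type $V^{K_0}_{w_{\max}\cdot\delta}$ (dot action $w(\delta+\rho_{\frak k})-\rho_{\frak k}$) precisely from those $\delta$ whose straightening length equals $s$, where $w_{\max}$ is the longest minimal coset representative for $W_{L_0}\backslash W_{K_0}$; all other $\delta$ contribute zero. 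One then reads off the $L_0$-types of $\pi_{\gamma+\rho_{\frak g}}$ by branching these $V^{K_0}_{w_{\max}\cdot\delta}$ from $K_0$ to $L_0$, and here I would use the Littelmann LS-path model, which computes this branching combinatorially and is well adapted to the Hermitian pair $(K_0,L_0)$.

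The engine driving both the common types and their infinite multiplicity is the relative invariant $f\in S^{m_0}(\frak u_{-1})$ of hypothesis (ii), on which $L_0$ acts by a character $\chi$. Multiplication by $f$ embeds $S^m(\frak u_{-1})\hookrightarrow S^{m+m_0}(\frak u_{-1})$ compatibly with the $L_0$-action up to $\chi$; passing to associated bundles and top cohomology yields $L_0$-maps $H^s(K_0/L_0;\mathbb E_\gamma\otimes\mathbb S^m(\frak u_{-1}))\to H^s(K_0/L_0;\mathbb E_\gamma\otimes\mathbb S^{m+m_0}(\frak u_{-1}))$ that shift the $L_0$-type by $\chi$. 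Iterating, any $L_0$-type of $\pi_{\gamma+\rho_{\frak g}}$ reappears, tensored by powers of $\chi$, in infinitely many degrees $m$; under (i) the element $w_{\frak k}^0$ carries $\Delta_2$ to $-\Delta_2$, so $\chi$ is a positive multiple of $\chi_{\det}$ and the $f$-tower runs parallel to the $\det$-tower on the holomorphic side. Locating one common seed by matching the minimal $K_0$-type of $\pi_{\gamma+\rho_{\frak g}}$ against the bottom type $E_\gamma$ of $\pi_{\gamma+\rho_{\frak k}}$, and then propagating along these two aligned towers, produces infinitely many common $L_0$-types; and since in the non-quaternionic case $K_1$ has rank $\ge 2$, each such type is realized for infinitely many $m$, giving infinite multiplicity in $\pi_{\gamma+\rho_{\frak g}}$.

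For $\dim E_\gamma=1$ I would upgrade ``infinitely many'' to ``all'': then $E_\gamma=\chi_\gamma$ is a character, the holomorphic types are exactly $\chi_\gamma\otimes S^n(\frak u_{-2})$ ranging over all signatures, and it suffices to exhibit each signature type inside some $V^{K_0}_{w_{\max}\cdot\delta}\!\downarrow\!L_0$ with $\delta$ occurring in $\chi_\gamma\otimes S^m(\frak u_{-1})$. I expect the principal obstacle to be exactly this matching: controlling the $K_0\!\downarrow\!L_0$ branching of the cohomologically produced $V^{K_0}_{w_{\max}\cdot\delta}$ finely enough, via the LS-path model, to guarantee that every holomorphic signature type is hit, and verifying that $\chi$ is genuinely commensurable with $\chi_{\det}$ so that the $f$-tower and the $\det$-tower actually meet. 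This is where the tube-type hypothesis (i) and the path combinatorics carry the weight, and where the breakdown of (ii) mirrors the quaternionic exclusions $\frak{g}_0=\frak{so}(4,1),\frak{sp}(1,l-1)$ of Theorem~\ref{main}.
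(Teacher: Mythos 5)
Your architecture coincides with the paper's: Schmid's decomposition (Theorem \ref{schmid}) of $E_\gamma\otimes S^*(\frak{u}_{-2})$ on the holomorphic side, and on the Borel--de Siebenthal side the tower of line subbundles $\mathbb{E}_{\gamma+jq\epsilon^*}\subset\mathbb{E}_\gamma\otimes\mathbb{S}^{jk}(\frak{u}_{-1})$ supplied by the relative invariant, followed by Borel--Weil--Bott and an LS-path analysis of the branching from $K_0$ to $L_0$. But there is a genuine gap: the step you yourself flag as ``the principal obstacle''---showing that each fixed type $E_{\gamma+a_1\gamma_1+\cdots+a_r\gamma_r}$ of the holomorphic series occurs in $\res_{\frak{l}}H^s(Y;\mathbb{E}_{\gamma+jq\epsilon^*})$ for infinitely many $j$---is the entire content of the paper's proof, and you do not carry it out. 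The paper does it via Lemmas \ref{branchingrule} and \ref{subreps} (explicit $\frak{l}$-dominant LS-paths of shape $m\epsilon^*$ built from the strongly orthogonal roots $\gamma_1,\ldots,\gamma_r$, together with the self-duality of $V_{m\epsilon^*}$ coming from tube type), Lemma \ref{gammasubreps} (to reinstate the $[\frak{l},\frak{l}]$-component $\varphi$ of $\gamma$), the identity $\sum_i\gamma_i=-2\epsilon^*$ of Proposition \ref{sumgamma}, and a parity argument (Remarks \ref{spin} and \ref{qodd}) ensuring $jq+c$ is even so that the exponents $p_i=-(jq+c)/2-a_{r+1-i}$ are integers. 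None of this is routine, and hypothesis (i) enters precisely here---not merely to make the holomorphic spectrum multiplicity-free.

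Your ``aligned towers'' heuristic also has a sign problem that conceals the difficulty. Multiplication by $f$ shifts the \emph{fibre} by $\chi=q\epsilon^*$ with $q<0$, but after applying $H^s$ and Borel--Weil--Bott the resulting irreducible $K_0$-module has highest weight $w_Y(\varphi)+(-t-jq-c)\epsilon^*$, whose central character moves in the direction of $+\epsilon^*$ as $j$ grows, whereas the holomorphic types $E_{\gamma+\sum a_i\gamma_i}$ drift toward $-\infty\cdot\epsilon^*$. The two families meet only because $\res_{\frak{l}}V_{w_Y(\varphi)+m\epsilon^*}$ contains constituents whose $\epsilon^*$-component sweeps all the way down to roughly $-m\epsilon^*$, which is exactly the second assertion of Lemma \ref{subreps} and is where tube type is used. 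Relatedly, ``any $L_0$-type of $\pi_{\gamma+\rho_{\frak{g}}}$ reappears tensored by powers of $\chi$'' would at best yield infinitely many \emph{distinct} types, not the infinite multiplicity of a \emph{fixed} type asserted in the theorem; and the appeal to $\mathrm{rank}(K_1)\geq 2$ is not the operative mechanism (the same argument applies verbatim in the quaternionic case, where it reproduces the second half of Theorem \ref{main}). So the skeleton is correct, but the proof is incomplete as written.
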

\noindent

We recall, in Proposition \ref{relativeinvariants}, the 
Borel-de Siebenthal root orders for which condition 
(ii) of the above theorem holds.  We obtain in Proposition  \ref{sumgamma} a criterion for condition (i) to hold. 
For the complete list of non-quarternionic cases in which condition~(i) holds, see Appendix~1~(\S\ref{ss:t:nquart}).
%For the convenience of the reader we tabulate the result in Appendix~1 (\S\ref{s:appendix1}) in the non-quaternionic cases. 
  
The second part of Theorem \ref{main} is a particular case of Theorem \ref{nonquaternionic} (when $Lie(K_1) \cong \frak{su}(2)$, the common $L_0$-types are all in $\pi_{\gamma+\rho_{\frak{k}}}$). The proof of Theorem \ref{main} involves only elementary considerations. But the proof of Theorem \ref{nonquaternionic} involves much deeper results and arguments.

The  existence (or non-existence) of one-dimensional $L_0$-submodules in 
$\oplus_{m\geq 1}S^m(\frak{u}_{-1})$ is closely 
related to the $L_0$-admissibility of  $\pi_{\gamma+\rho_{\frak{g}}}$. 
Note that Theorem \ref{nonquaternionic} implies that, under the condition $w_{\frak{k}}^0(\Delta_0)=\Delta_0$, the restriction 
of the Borel-de Siebenthal discrete series is not 
$L_0$-admissible when $\sum_{m>0}S^m(\frak{u}_{-1})$ has one dimensional subrepresentation. When $\frak{k}_1\cong \frak{su}(2)$ and $\sum_{m>0}S^m(\frak{u}_{-1})$ has no one dimensional submodule, the Borel-de Siebenthal discrete 
series is $L_0$-admissible. In fact, one has the following result:

\begin{proposition}\label{inadmissible}
Suppose that $S^m(\frak{u}_{-1})$ has a one-dimensional $L_0$-subrepresentation for some $m\geq 1$, then the Borel-de Siebenthal discrete series $\pi_{\gamma+\rho_{\frak{g}}}$ 
is not $L_0'$-admissible where $L_0'=[L_0,L_0]$.   
The converse holds if $\frak{k}_1\cong \frak{su}(2)$.   
\end{proposition}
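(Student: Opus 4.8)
The plan is to exploit the graded description of the $K_0$-finite part $V=\bigoplus_{m\geq 0}V_m$, with $V_m=H^s(K_0/L_0;\mathbb{E}_\gamma\otimes \bs^m(\frak{u}_{-1}))$, together with the structure $L_0=Z\cdot L_0'$, where $Z$ is the central circle with $Z_{K_0}(Z)=L_0$ and $L_0'=[L_0,L_0]$. First I would record the elementary but crucial observations that any one-dimensional representation of $L_0$ is trivial on $L_0'$, and that $L_0'$-admissibility implies $L_0$-admissibility: an $L_0$-type is of the form $\eta\otimes\sigma'$ with $\eta\in\hat Z$ and $\sigma'\in\widehat{L_0'}$, and finiteness of $\sum_\eta[\eta\otimes\sigma':V]$ forces finiteness of each summand. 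This is why the present statement needs neither condition~(i) of Theorem~\ref{nonquaternionic} nor the tube-type hypothesis: passing to $L_0'$ forgets the $Z$-central character, which is exactly what removes the need to match central characters with $\pi_{\gamma+\rho_{\frak{k}}}$.

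For the forward implication, let $f$ span a one-dimensional $L_0$-submodule $\bc_\chi\subset S^{m_0}(\frak{u}_{-1})$ with $m_0\geq 1$, so that $\chi|_{L_0'}=1$. Since the symmetric algebra $S^*(\frak{u}_{-1})$ is an integral domain, multiplication by $f$ is an injective $L_0$-map $S^m(\frak{u}_{-1})\otimes\bc_\chi\hookrightarrow S^{m+m_0}(\frak{u}_{-1})$, hence an $L_0'$-equivariant injection $S^m(\frak{u}_{-1})\hookrightarrow S^{m+m_0}(\frak{u}_{-1})$ upon restriction to $L_0'$. Consequently every $L_0'$-type occurring in some $S^m(\frak{u}_{-1})$ reappears in $S^{m+km_0}(\frak{u}_{-1})$ for all $k$ (already the powers $f^k$ produce the trivial $L_0'$-type infinitely often), so the model $\bigoplus_m S^m(\frak{u}_{-1})$ is patently non-admissible over $L_0'$. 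The substance is to transport this to $V$. I would do so through the Bott--Borel--Weil computation of $V_m$: the $H^s$-contributing $L_0$-types $\tau$ of $\mathbb{E}_\gamma\otimes\bs^m(\frak{u}_{-1})$ are singled out by an $\frak{l}$-regularity condition on $\tau+\rho_{\frak{k}}$, and since $d\chi$ is orthogonal to every root of $L_0'$ the shift $\tau\mapsto\tau+\chi$ induced by $\cdot f$ preserves that condition and the resulting $K_0$-type up to its $Z$-central character, so a fixed $L_0'$-type genuinely recurs in infinitely many $V_m$. Equivalently, and perhaps more cleanly, one invokes Kobayashi's criterion \cite[Theorem~2.9]{kobayashi94}: $\beta:=d\chi$ is a nonzero vector of $\mathrm{AS}_{K_0}(\pi_{\gamma+\rho_{\frak{g}}})$, being the limiting direction of the highest weights of the $K_0$-types generated by the $f^k$, and it is orthogonal to the roots of $L_0'$; thus $\mathrm{AS}_{K_0}(\pi_{\gamma+\rho_{\frak{g}}})$ meets the cone attached to $L_0'$ nontrivially, and $\pi_{\gamma+\rho_{\frak{g}}}$ fails to be $L_0'$-admissible.

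For the converse under $\frak{k}_1\cong\frak{su}(2)$ I would argue the contrapositive: if no $S^m(\frak{u}_{-1})$, $m\geq1$, carries a one-dimensional $L_0$-submodule, then $\pi_{\gamma+\rho_{\frak{g}}}$ is $L_0'$-admissible. In the quaternionic case the only Borel--de Siebenthal orders with no relative invariant of $(L,\frak{u}_1)$ are $\frak{g}_0=\frak{so}(4,1)$ and $\frak{sp}(1,l-1)$, the exceptional quaternionic cases singled out after Theorem~\ref{main} (cf. Proposition~\ref{relativeinvariants} and \cite[\S4]{ow}). Here $\frak{k}_1=\frak{su}(2)$ carries $Z$, one has $K_0=K_1\times K_0''$ with $K_1=\SU(2)$, $L_0=Z\times K_0''$ and hence $L_0'=K_0''$, while $K_0/L_0\cong\bp^1$, so that $s=1$ and $V_m=H^1(\bp^1;\mathbb{E}_\gamma\otimes\bs^m(\frak{u}_{-1}))$. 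I would then read off the $L_0'=K_0''$-decomposition of each $V_m$ from Bott--Borel--Weil on $\bp^1$ and verify directly that, in the absence of a relative invariant, the $K_0''$-highest weights that occur move off to infinity with $m$, so each $K_0''$-type appears in only finitely many $V_m$ and with bounded multiplicity; this is precisely the $L_0'$-admissibility sought, and it is of the elementary nature promised for Theorem~\ref{main}.

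The principal obstacle is the transfer step in the forward implication: injectivity of $\cdot f$ on $S^*(\frak{u}_{-1})$ does not pass automatically to the top-degree Dolbeault cohomology $H^s$ (top cohomology is only right-exact), so one must rule out cancellation, either through the explicit Bott--Borel--Weil bookkeeping of the surviving $L_0$-types or, as indicated, through Kobayashi's asymptotic-support criterion. Once a single $L_0'$-type is shown to recur in infinitely many $V_m$, the non-admissibility of Part~1 follows, and—via the reduction to the two families $\frak{so}(4,1)$, $\frak{sp}(1,l-1)$ and the $\bp^1$ computation—so does the converse of Part~2.
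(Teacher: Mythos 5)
Your proposal is correct and follows essentially the same route as the paper: the powers $f^j$ of the relative invariant give \emph{direct summands} $E_{\gamma+j\chi}=E_\gamma\otimes\bc f^j$ of $E_\gamma\otimes S^{jk}(\frak{u}_{-1})$ (complete reducibility of $L_0$-modules makes the right-exactness worry you flag moot, since then $H^s(Y;\mathbb{E}_{\gamma+j\chi})$ is a direct summand of the $K_0$-finite part), and because $\chi$ is a multiple of $\nu^*$, hence trivial on $L_0'$, the extremal $L_0$-type of each of these $K_0$-modules restricts to the \emph{same} $L_0'$-type $E_{\gamma_0}$ for every $j$, exactly as in the paper. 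The converse is also handled as you outline: Proposition \ref{relativeinvariants} reduces to $\frak{so}(4,1)$ and $\frak{sp}(1,l-1)$, where $S^m(\frak{u}_{-1})$ is $L_0'$-irreducible with highest weight escaping to infinity with $m$, so no fixed $L_0'$-type can recur in infinitely many cohomology summands.
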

\noindent
For a general criterion for admissibility of restriction to a closed subgroup from a compact Lie group, see~\cite[Theorem~6.3.3]{kobayashipm}.

We also obtain, in Proposition \ref{holadmissibility}, 
a result on the $L_0'$-admissibility of the holomorphic 
discrete series $\pi_{\gamma+\rho_{\frak{k}}}$ of $K_0^*$.  Note that any holomorphic discrete series representation of  
$K_0^*$ is $L_0$-admissible. (It is even $T_0$-admissible; see, for example \cite{repka}).

Combining Theorems \ref{main} and \ref{nonquaternionic},  
we see that there are infinitely many $L_0$-types common 
to $\pi_{\gamma+\rho_{\frak{g}}}$ and $\pi_{\gamma+\rho_{\frak{k}}}$ whenever $S^m(\frak{u}_{-1})$ has a one-dimensional $L_0$-submodule for some $m\geq 1$ and 
$w^0_{\frak{k}}(\Delta_0)=\Delta_0$. We are led 
to the 
following questions.

\noindent
{\bf Questions:} Suppose that there exist infinitely many common $L_0$-types between a Borel-de Siebenthal discrete series $\pi_{\gamma+\rho_{\frak{g}}}$ of $G_0$ and the holomorphic $\pi_{\gamma+\rho_{\frak{k}}}$ of $K_0^*$. Then (i) Does there exist  
a one-dimensional $L_0$-subrepresentation in $S^m(\frak{u}_{-1})$? (ii) Is it true that  $w^0_{\frak{k}}(\Delta_0)=\Delta_0$?

We make use of the description of the $K_0$-finite part of  the Borel-de Siebenthal 
discrete series obtained by \O rsted and Wolf, in terms of the Dolbeault cohomology 
of the flag variety $K_0/L_0$ with coefficients in the holomorphic bundle associated to the $L_0$-represenation $E_{\gamma}\otimes S^m(\frak{u}_{-1})$. This will be recalled in \S2.
Proof of Theorem \ref{nonquaternionic} 
crucially makes use of Theorem~\ref{schmid} on the decomposition of the $L_0$-representation $S^m(\frak{u}_{-2})$ and Littelmann's path model \cite{littelmann},\cite{littelmann95}.   

There are three major obstacles in obtaining complete result in the non-quaternionic case, namely, 
(i) the decomposition of $S^m(\frak{u}_{-1})$ into $L_0$-types $E_\lambda$, (ii)  the decomposition of the tensor product $E_\gamma\otimes E_\lambda$ into irreducible $L_0$-representations $E_\kappa$, and, 
(iii) the decomposition of  
the restriction of the irreducible $K_0$-representation $H^s(K_0/L_0;\mathbb{E}_\kappa)$ to $L_0$. The latter two  
problems can, in principle, be solved using 
the work of Littelmann \cite{littelmann}.  The problem 
of detecting occurrence of an infinite family of common $L_0$-types in the general case appears to be intractable. 

We assume familiarity with  
basic facts concerning symmetric spaces and the theory of discrete series representations, referring the reader to \cite{helgason} and  \cite{knapp}.   

The results of this paper have been announced in~\cite{prs:cr}.

 \noindent
{\bf Acknowledgments:}  The authors thank Prof. R. Parthasarathy for bringing to our notice his paper 
\cite{parthasarathy}  and the referee for a careful reading of the manuscript.
\begin{center}
\vfill\eject
{\bf List of Notations}
\end{center}
\nopagebreak
\begin{tabular}{ll}
$G_0$ & simply connected non-compact real simple Lie group.\\
$K_0$ & maximal compact subgroup of $G_0$.\\
$T_0$ & maximal torus of $K_0.$\\
$\frak{g}_0,\frak{k}_0,\frak{t}_0$ & Lie algebras of $G_0, K_0, T_0$ respectively.\\
$\frak{g}, \frak{k}, \frak{t}$ & complexifications of $\frak{g}_0, \frak{k}_0, \frak{t}_0$ respectively.\\
$G,K$& simply connected complex Lie groups with Lie algebras $\frak{g}$ and $\frak{k}$ respectively.\\
$\Delta$ & root system of $\frak{g}$ with respect to $\frak{t}$.\\
$\Delta^+, \Psi$ & Borel-de Siebenthal positive system of $G_0$ and the set of simple roots.\\
$\nu, \mu$ & the simple non-compact root and the highest root in $\Delta^+$ respectively.\\
$\frak{k}_1^\bc, \frak{k}_1, K_1$ & the simple ideal in $\frak{k}$ containing the root space $\frak{g}_\mu$, compact real form of 
$\frak{k}_1^\bc$ \\ & contained in $\frak{k}_0$ and the simple factor of $K_0$ with Lie algebra $\frak{k}_1$ respectively.\\
$\Delta_i\subset \Delta$ & roots with coefficient of $\nu$ equal to $i$ when expressed in terms of simple roots.\\
$\Delta^+_0, \Delta_0^-$ & positive and negative roots in $\Delta_0.$\\
$\Delta_{\frak{k}}$ & $\Delta_0\cup \Delta_2\cup \Delta_{-2}$, the root system of $\frak{k}$.\\
$\Delta_{\frak{k}}^+, \Psi_{\frak{k}}$ & $\Delta_0^+\cup \Delta_2$ the induced positive system of $\frak{k}$ and the set of simple roots of $\frak{k}$.\\
$\epsilon$ & the simple root  in $\Delta_\frak{k}^+$ which is in $\Delta_2$.\\
$\nu^*$ & fundamental weight of $\frak{g}$ corresponding to $\nu\in \Psi$.\\ 
$\epsilon^*$ & fundamental weight of $\frak{k}$ corresponding to $\epsilon\in \Psi_{\frak{k}}$.\\
$\frak{l}_0, \frak{l}$ & the Lie subalgebra of $\frak{k}_0$ containing $\frak{t}_0$ with root system $\Delta_0$, and its\\ &  complexification.\\
$L_0, L_0'$, $L$ & the Lie subgroups of $K_0$ and $K$ with Lie algebras $\frak{l}_0, [\frak{l}_0,\frak{l}_0],$ and $\frak{l}$ respectively.\\ 
$K_0^*$ & the real form of $K$ dual to the compact form $K_0$ with respect to $L_0$. \\
$w^0_{\frak{k}}, w^0_{\frak{l}}$ & longest element 
of the Weyl group of $K_0$ and $L_0$.\\
$\frak{u}_i$ & $\sum_{\alpha\in \Delta_i}\frak{g}_\alpha$, $i=\pm 1,\pm 2$.\\
$Q, \frak{q}$ & the parabolic subgroup of $G$ with Lie algebra $\frak{q}=\frak{l}+\frak{u}_{-1}+\frak{u}_{-2}.$\\
$\mathcal{A}(E, L)$ & the algebra of relative invariants of a prehomogeneous $L$-represenation $E$.\\
$Y, s$ & the flag variety $K_0/L_0=K/K\cap Q$,  $s=\dim_\bc Y.$\\
$X$ & $K_0^*/L_0$, the non-compact dual of $Y$.\\
$w_Y^0$ & the element  $w^0_{\frak{k}}w_{\frak{l}}^0$.\\
$\rho_{\frak{g}}, \rho_{\frak{k}}$ & $(1/2)(\sum_{\alpha\in \Delta^+}\alpha), 
(1/2)(\sum_{\alpha\in \Delta_{\frak{k}}^+}\alpha).$\\
$\pi_{\gamma+\rho_{\frak{g}}}, \pi_{\gamma+\rho_{\frak{k}}}$ & discrete series representations of $G_0$ and $K_0^*$ with 
Harish-Chandra\\ & parameters  $\gamma+\rho_{\frak{g}}, \gamma+\rho_{\frak{k}}$ respectively.\\
$\pi_{K_0}$ & the space of $K_0$-finite vectors of a $G_0$-representation $\pi$. \\
$E_\kappa, V_\lambda$ & the irreducible $\frak{l}$  or $L_0$ (resp. $\frak{k}$ or $K_0$) representation with highest weight\\ &$\kappa$ (resp. $\lambda$).\\
$\res_{\frak{l}} V_\lambda$ & restriction of $V_\lambda$ to $\frak{l}$.\\
$U_k$ & irreducible $\frak{su}(2)$-representation of dimension $k+1$.\\
$\mathbb{E_\kappa}$ & the holomorphic vector bundle over $Y$ associated to $E_\kappa$.\\ 
$\{\gamma_1,\ldots,\gamma_r\}$ & maximal set of strongly orthogonal non-compact negative roots of $K_0^*$.\\
\end{tabular}

%%%%%%%%%%%%%%%%%%%%%%%%%%%%%%%%%%%%%%%%%%%%%%%%%

%\input{material}
%%%%%%%%%%%%%%%%%%%%%%%%%%%%%%%%%%%%%%%%%%%%%%%%%%

\section{Borel-de Siebenthal discrete series}

In this section we recall a description of the 
Borel-de Siebenthal series.  
We shall follow the notations of \O rsted and Wolf, which we now recall.

\subsection{} \label{hypothesis}
Let $\frak{g}_0$ be a real simple non-compact 
Lie algebra and let $\frak{k}_0$ be a maximal compactly embedded Lie subalgebra of $\frak{g}_0$ with rank $\frak{g}_0=$ rank $\frak{k}_0$ and $\frak{k}_0$ semisimple. 

Let $\frak{t}_0$ 
be a Cartan subalgebra of $\frak{k}_0$, which is also 
a Cartan subalgebra of $\frak{g}_0.$   The notations 
$G_0,K_0, \frak{g}, \frak{k},$ etc. will have the same 
meaning as in \S1. Let $\Delta$ be the root system of $(\frak{g},\frak{t})$, 
$\Delta^+\subset \Delta$ be a Borel-de Siebenthal positive system and $\Psi$ the set of simple roots.  Let $\alpha\in \Delta$ be any root and let $n_\nu(\alpha)$ be the coefficient of $\nu$ (the non-compact simple root) when $\alpha$ is expressed as a sum of simple roots. Since $\frak{k}_0$ is semisimple, one has a partition of the set of roots $\Delta$ into subsets $\Delta_i, i=0, \pm 1,\pm 2$ where $\Delta_i\subset \Delta$ defined to be $\{\alpha\in \Delta \mid n_\nu(\alpha)=i\}$. Denote by $\mu$ the highest root; then $\mu\in\Delta_2$.   The set $\Delta_{\frak{k}}:=\Delta_0\cup \Delta_2\cup \Delta_{-2}$ is the root system of $\frak{k}$ with respect to $\frak{t}$ for which 
$\Psi\setminus \{\nu\}\cup \{-\mu\}$ is a set of 
simple roots defining a positive system of roots, namely, $\Delta_0^+\cup \Delta_{-2}$. On the other hand $(\frak{k}, \frak{t})$ inherits a positive root system from $(\frak{g},\frak{t})$, namely, 
$\Delta_{\frak{k}}^+:=\Delta_0^+\cup\Delta_2$.    
Lemma \ref{positivesystem} brings out the relation between the two.  

The Killing form $B:\frak{t}\times \frak{t}\lr \bc$ determines a non-degenerate symmetric bilinear pairing  $\langle ~,~\rangle:\frak{t}^*\times \frak{t}^*\lr \bc$ which is normalized so that 
$\langle\nu,\nu\rangle=2.$   For any $\alpha\in \frak{t}^*$, denote by $H_\alpha\in \frak{t}$ the unique element such that $\alpha(H)=B(H,H_\alpha)$.  Then our  normalization requirement is that 
$\langle\alpha,\beta\rangle:=2B(H_\alpha,H_\beta)/B(H_\nu,H_\nu)$ for all $\alpha,\beta\in \frak{t}^*$.  Let $\nu^*\in \frak{t}^*$ be the 
fundamental weight corresponding to $\nu\in \Psi$. 

Now define  $\frak{q}:=\frak{t}+\frak{u}_0+\frak{u}_{-1}+\frak{u}_{-2}$ where $\frak{u}_i=\sum_{\alpha\in 
\Delta_i}\frak{g}_\alpha, -2\leq i\leq 2$. Then $\frak{q}$ is a maximal parabolic subalgebra of $\frak{g}$ that omits the 
non-compact simple root $\nu$.  The Levi part of $\frak{q}$ is the Lie subalgebra $\frak{l}=\frak{t}+\frak{u}_0$ and the nilradical of $\frak{q}$ is $\frak{u}_-=\frak{u}_{-1}
+\frak{u}_{-2}$.  Note that the centre of $\frak{l}$ is $\mathbb{C}H_{\nu^*}$.  We have that 
$\Delta_{\frak{l}}:=\Delta_0$ is the root system of $\frak{l}$ with respect to $\frak{t}\subset \frak{l}$ for which $\Psi\setminus\{\nu\}$ is the set of simple roots 
defining the positive system $\Delta_{\frak{l}}^+:=\Delta^+_0$. 
Let $\frak{k}_1^\bc$ denote the simple ideal of $\frak{k}$   
that contains the root space $\frak{g}_\mu$.  It is the complexification of the Lie algebra  $\frak{k}_1$ of 
a compact Lie group $K_1$ which is a simple factor of $K_0$. 
It turns out that $\frak{u}_{2},\frak{u}_{-2}\subset \frak{k}_1^\bc$.
Let $\frak{k}_2$ be the ideal of $\frak{k}_0$ such that  $\frak{k}_0=\frak{k}_1\oplus \frak{k}_2$. We let $\frak{l}_j^\bc
=\frak{k}_j^\bc\cap \frak{l}, j=1,2.$ Note that $\frak{k}_2^\bc=\frak{l}_2^\bc$ and so $\frak{l}_2^\bc$ is semisimple. Thus the centre of $\frak{l}$ is contained in $\frak{l}_1^\bc$.   

Let $G$ denote the simply connected complex Lie group 
with Lie algebra $\frak{g}$, $Q\subset G$, the parabolic subgroup with Lie algebra $\frak{q}$. Denote by $K, L\subset G$ the connected Lie subgroups with Lie algebras $\frak{k}, \frak{l}$ respectively.  Let $L_0\subset K_0$ be the 
 centralizer of the circle group $S_{\nu^*}:=\{\exp(itH_{\nu^*})\mid t\in \br\}$ contained in $K_0$.   
Then $K_0/L_0$ is a complex flag variety which is 
a Hermitian symmetric space. 
Also $\frak{l}_0\subset \frak{k}_0$ is a compact real form 
of $\frak{l}$.  Let $L_1\subset K_1$ be the centralizer of 
$S_{\nu^*}\subset K_1$.  Then $L_1\subset L_0$ and $Lie(L_1)=:\frak{l}_1$ is a compact real form of $\frak{l}_1^\bc$.  Let $K_2$ be the connected Lie subgroup of 
$K_0$ with Lie algebra $\frak{k}_2$. Then $K_0=K_1\times K_2$ as $K_0$ is simply connected. Also $L_0=L_1\times K_2.$ It 
will be convenient to set $L_2:=K_2$.

The inclusion  $\frak{g}_0\hookrightarrow \frak{g}$ induces a map 
$G_0\lr G$, which defines smooth maps $G_0/L_0\subset G/Q$  
and $K_0/L_0\subset G_0/L_0\subset G/Q$ since $\frak{l}_0\subset \frak{q}$. Since 
$\dim_\br(G_0/L_0)=\dim_\br(\frak{u}_1+\frak{u}_2)=
2\dim_\bc(G/Q)$, we conclude that $G_0/L_0$ is an open 
domain of the complex flag variety $G/Q$.  
Note that one has a fibre bundle projection $G_0/L_0\lr G_0/K_0$ with fibre $K_0/L_0$. We shall denote the identity coset 
of any homogeneous space by $o$.  
The holomorphic tangent bundles of 
$K_0/L_0$ and $G/Q$ are the bundles associated to the $L_0$-modules $\frak{u}_2$ and $\frak{u}_1\oplus \frak{u}_2$ respectively since we have the  
isomorphisms of tangent spaces $\mathcal{T}_o K_0/L_0=\frak{u}_2$ and $\mathcal{T}_o G/Q=\frak{u}_1\oplus \frak{u}_2$ of $L_0$-modules.  Hence the normal bundle to the imbedding $K_0/L_0\hookrightarrow G/Q$ is the bundle associated to the representation of $L_0$ on $\frak{u}_1$.

Denote by $(K_0^*, L_0)$ the non-compact Hermitian symmetric pair dual to the compact Hermitian symmetric pair $(K_0,L_0)$.  A well-known result of Harish-Chandra 
\cite[Ch. VIII]{helgason} is that $K_0^*/ L_0$ is naturally  imbedded as a bounded symmetric domain in $\frak{u}_2=\mathcal{T}_o(K_0/L_0),$ the holomorphic tangent space at $o$ of $K_0/L_0$.  Denote by $\mathcal{U}_{\pm 2}\subset K$ the image of $\frak{u}_{\pm 2}$ under the exponential map. 
Then $\mathcal{U}_2$ is an open neighbourhood of $o$ in $K/(L.\mathcal{U}_{-2})\cong K_0/L_0$. Thus $K_0^*/L_0=:X$ is imbedded in $K_0/L_0=:Y$ as an open complex analytic submanifold.   

We recall the following result due to \O rsted and Wolf \cite{ow}. See also \cite{parthasarathy} and Appendix~2~(\S\ref{s:appendixrp}) below.  Let $\gamma$ be the highest weight of an 
irreducible finite dimensional complex representation 
of $L_0$ on $E_\gamma$ and suppose that $\langle \gamma+\rho_{{\frak{g}}},\alpha\rangle <0$ for all $\alpha\in 
\Delta_1\cup \Delta_2$. 
 
\begin{theorem} \label{bds} (Parthasarathy \cite{parthasarathy}, \O rsted and Wolf  \cite{ow})  The $K_0$-finite part of the Borel-de Siebenthal discrete 
series $\pi_{\gamma+\rho_{\frak{g}}}$ is isomorphic to  
$\oplus_{m\geq 0} H^s(Y;\mathbb{E}_\gamma\otimes 
\mathbb{S}^m(\frak{u}_{-1}))$ where $s=\dim Y$ and moreover, it is $K_1$-admissible. 
\end{theorem}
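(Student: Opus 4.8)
The plan is to establish Theorem~\ref{bds} in two independent pieces: first the cohomological description of the $K_0$-finite part, and then the $K_1$-admissibility, which I would deduce from the description together with a weight-counting argument. Since the statement attributes the result to Parthasarathy and to \O rsted--Wolf, the essential content is to assemble their cited results in the present notation rather than to reprove the realization of discrete series from scratch; I shall therefore take as given the Schmid realization of the Harish-Chandra module of $\pi_{\gamma+\rho_{\frak{g}}}$ as a Dolbeault cohomology on the open orbit $G_0/L_0\subset G/Q$, with coefficients in the bundle associated to~$E_\gamma$.

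First I would spell out the cohomological description. The key geometric input is the fibre bundle projection $G_0/L_0\lr G_0/K_0$ with fibre the compact flag variety $Y=K_0/L_0$, recorded in \S\ref{hypothesis}. The plan is to compute the $G_0$-equivariant Dolbeault cohomology on $G_0/L_0$ by pushing forward along this fibration: along the base directions $G_0/K_0$ the realization of a discrete series is concentrated in a single degree and contributes the holomorphic sections $S^\bullet(\frak{u}_{-1})$ coming from the normal bundle $\frak{u}_1$ to $Y\hookrightarrow G/Q$ (identified in \S\ref{hypothesis}), while along the fibre $Y$ one is left with the cohomology $H^s(Y;\,\cdot\,)$ in top holomorphic fibre-degree $s=\dim_\bc Y$. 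Organizing the normal directions by the grading $\frak{u}_{-1}$ and taking symmetric powers, this yields the summand $H^s(Y;\mathbb{E}_\gamma\otimes\mathbb{S}^m(\frak{u}_{-1}))$ for each $m\geq 0$, and summing over $m$ gives the stated module. The negativity hypothesis $\langle\gamma+\rho_{\frak{g}},\alpha\rangle<0$ for $\alpha\in\Delta_1\cup\Delta_2$ is exactly what guarantees that the relevant cohomology is nonvanishing and concentrated in the single degree $s$, so I would invoke Bott--Kostant vanishing on $Y$ to pin down both the degree and the $K_0$-module structure of each $H^s(Y;\mathbb{E}_\gamma\otimes\mathbb{S}^m(\frak{u}_{-1}))$.

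Next I would treat $K_1$-admissibility. The cleanest route is Kobayashi's criterion \cite[Theorem 2.9]{kobayashi94}, already cited in the introduction: admissibility of the restriction to $K_1$ is controlled by the asymptotic cone of the $T_0$-weights of the module, and it suffices to check that the intersection of this cone with the relevant subspace associated to $K_1$ is trivial. Concretely, I would verify that for each fixed $K_1$-type, only finitely many of the summands $H^s(Y;\mathbb{E}_\gamma\otimes\mathbb{S}^m(\frak{u}_{-1}))$ can contribute, which amounts to showing that the $\nu^*$-degree (equivalently the $S_{\nu^*}$-weight) grows with $m$ while a fixed irreducible $K_1$-representation contains only boundedly many such weights. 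Here the structural fact from \S\ref{hypothesis} that $\frak{u}_{\pm 2}\subset\frak{k}_1^\bc$, and that the center $\mathbb{C}H_{\nu^*}$ of $\frak{l}$ acts by a weight increasing linearly in $m$ on $\mathbb{S}^m(\frak{u}_{-1})$, is what forces the multiplicities to be finite.

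I expect the main obstacle to be making the push-forward along $G_0/L_0\lr G_0/K_0$ rigorous at the level of $(\frak{g},K_0)$-modules: one must justify both the concentration in a single cohomological degree and the clean separation of the fibre and base contributions, and control possible higher-direct-image terms. The negativity and Borel--de Siebenthal positivity hypotheses should close this gap through vanishing theorems, but verifying the exact degree shift $s=\dim_\bc Y$ and the absence of cross terms between the symmetric powers is the delicate point. Since the full argument is carried out by \O rsted and Wolf \cite{ow} and by Parthasarathy \cite{parthasarathy}, my plan is to reduce to their statements at this step rather than to reconstruct the spectral-sequence degeneration in detail, and to expend the real effort on the admissibility count, where the explicit grading by $n_\nu$ makes Kobayashi's criterion directly checkable.
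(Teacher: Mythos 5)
Your outline is correct in substance, but it takes a genuinely different route from the one the paper actually records. The paper treats Theorem~\ref{bds} as a cited result and supplies its own derivation in Appendix~2 (\S\ref{s:appendixrp}) via Parthasarathy's \emph{algebraic} construction: one takes the Harish-Chandra root order $P=\Delta_0^+\cup\Delta_{-1}\cup\Delta_{-2}$, so that $\frak{p}_+=\frak{u}_{-1}$ satisfies $[\frak{p}_+,[\frak{p}_+,\frak{p}_+]]=0$; Parthasarathy's theorem then endows $\oplus_{m\geq 0}H^d(K/B;\mathbb{L}_{\lambda+2\rho}\otimes \mathbb{S}^m(\frak{p}_+))$ with a unitarizable $\frak{g}$-module structure which, for $\lambda+\rho$ regular, is the $K_0$-finite part of the discrete series with that parameter, and the Leray-type isomorphism $H^d(K/B;\cdot)\cong H^s(Y;\cdot)$ converts this into the stated form with $\gamma=\lambda+2\rho_n+\kappa$. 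That route avoids precisely the step you identify as delicate --- controlling the degeneration of the normal-bundle expansion of the Dolbeault cohomology on the open orbit $G_0/L_0\subset G/Q$ --- whereas your geometric (\O rsted--Wolf) route keeps the fibration $G_0/L_0\lr G_0/K_0$ in view and explains conceptually why $\mathbb{S}^m(\frak{u}_{-1})$ (the symmetric powers of the conormal bundle of $Y$) appears. One caution on your heuristic: the base $G_0/K_0$ carries no invariant complex structure here (it is not Hermitian symmetric by hypothesis), so it does not literally ``contribute holomorphic sections''; the correct statements are that the Taylor expansion transverse to $Y$ in the $\frak{u}_1$-directions yields the symmetric powers, and that the concentration in degree $s$ is the Borel--Weil--Bott computation on the fibre $Y$ carried out in Remark~\ref{spin}(ii). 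Your admissibility argument is essentially the one the paper endorses (equivalently, Kobayashi's criterion \cite{kobayashi94}): since $H_{\nu^*}$ lies in $\frak{k}_1^{\mathbb{C}}$ and every root in $\Delta_{-1}$ pairs to $-1$ with $\nu^*$, the pairing of the highest weight of any $K_1$-constituent of the $m$-th summand with $w_Y(\nu^*)$ is a strictly decreasing affine function of $m$, so a fixed $K_1$-type can occur in at most one (finite-dimensional) summand; stated this way, via highest weights rather than ``boundedly many weights'', the count closes cleanly.
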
   

The $K_1$-admissibility of the Borel de Siebenthal discrete series also follows from Kobayashi~\cite{kobannals} who obtained a criterion for the admissibility of the restriction of certain representations to reductive subgroups in a more general context.

\subsection{Certain $L_0$ representations}\label{ss:lnot}
Since $\frak{l}=\frak{l}_1^\bc\oplus \frak{l}_2^\bc$, we have the decomposition 
\label{page}
$\gamma=\gamma_1+\gamma_2,$ with  $\gamma_i\in \frak{t}_i^*$ where  $\frak{t}_i=\frak{l}_i^\bc\cap \frak{t}$.  Also, 
$E_\gamma=E_{\gamma_1}\otimes E_{\gamma_2}$. Furthermore $H_{\nu^*}$ generates the centre 
of $\frak{l}_1^{\mathbb{C}}$ and we have the Levi decomposition  
$\frak{l}_1^\bc=[\frak{l}_1^\bc,\frak{l}_1^\bc]+\frak{z}(\frak{l}^\bc_1)$ where $\frak{z}(\frak{l}_1^\bc)=\frak{z}(\frak{l})=\bc H_{ \nu^*}$. 
We write $\gamma_1=\gamma'+t\nu^*$ where $\gamma'\perp \nu^*$.  
The assumption that $\gamma$ is an $\frak{l}$-dominant integral weight and that $\gamma+\rho_{\frak{g}}$ is negative on positive roots of $\frak{g}$ complementary to those of $\frak{l}$ implies that $t$ is `sufficiently negative'.  That is, $t$ is real and 
it satisfies the conditions (see \cite[Theorem 2.12]{ow}):
\[
t<-1/2\langle\gamma_0+\rho_{\frak{g}},\mu\rangle ~~ \textrm{and} ~~~t<-\langle\gamma_{0}+\rho_{\frak{g}},w^0_{\frak{l}}(\nu)\rangle \eqno(1)
\]  
where $\gamma_0:=\gamma-t\nu^*\in [\frak{l},\frak{l}]$ and $w_{\frak{l}}^0$ denotes the longest element of the 
Weyl group of $(\frak{l},\frak{t})$ with respect to $\Delta^+_{\frak{l}}$.  \footnote{The decomposition 
of $\gamma=\gamma_0+t\nu^*$ used in \cite[Theorem 2.12]{ow} 
is different.}

Recall that $\Delta_{\frak{k}}^+=\Delta_0^+\cup \Delta_2$ is the positive root system of $(\frak{k},\frak{t})$ that is 
compatible with the positive root system $\Delta^+$ of $\frak{g}$. It is easily seen that $\Psi_{\frak{k}}:=\Psi\setminus\{\nu\}\cup \{\epsilon\}\subset \Delta_{\frak{k}}^+$ is the set of simple roots where 
$\epsilon$ is the lowest root in $\Delta_2$ (so that $\beta\geq \epsilon$ for all $\beta\in \Delta_2$).
 \footnote{\O rsted and Wolf \cite{ow} denote by $\Psi_{\frak{k}}$ the set $\Psi\setminus \{\nu\}\cup \{-\mu\}$.} Also $\Psi_{\frak{l}}:=\Psi\cap \Delta_0^+=\Psi\setminus\{\nu\}$ is the set of simple 
roots of $\frak{l}$ for the positive system $\Delta_0^+$.      
It is readily verified that 
$\Psi_{\frak{k}}=w_Y(\Psi\setminus\{\nu\}\cup \{-\mu\})$   
where $w_Y=w_{\frak{k}}^0w^0_{\frak{l}}.$ 
The adjoint action of $L_0$ on $\frak{g}$ yields $L_0$-representations on $\frak{u}_i, i=\pm 1, \pm 2$, which are irreducible. The highest (resp. lowest) weights of $\frak{u}_{-2}, \frak{u}_{-1}$, $j=1,2$, are $-\epsilon, -\nu$ (resp. $-\mu, w^0_{\frak{l}}(-\nu)$) respectively.   

Let $\Xi=\{\xi_1,\ldots,\xi_l\}$ be the 
set of fundamental weights of $\frak{g}$ with respect to $\Psi=\{\psi_1,\ldots,\psi_l\}$ so that $2\langle\xi_i,\psi_j\rangle/\langle\psi_j,\psi_j\rangle=\delta_{i,j}.$ (Here $\delta_{i,j}$ denotes the Kronecker 
delta.)  

If $\psi\in \Psi_{\frak{k}}$,  the corresponding fundamental weight of $\frak{k}$ will be denoted by $\psi^*$. If $\psi_i$ is a compact simple root of $\frak{g}_0$, it should be noted that in general 
$\psi_i^*\neq \xi_i$.      

In conformity with the notations of \cite{ow}, we shall denote by $\nu^*$ the weight $\xi_{i_0}$ where $\nu=\psi_{i_0}\in \Psi$. (Since $\nu\notin \Psi_{\frak{k}}$ there is no danger of confusion.)  
 
\begin{lemma} \label{positivesystem}
With the above notations, suppose that $\nu=\psi_{i_0}$  
and $\epsilon=\sum a_i\psi_i\in \Delta_2$. Then:
(i)  $\epsilon^*=||\epsilon||^2\nu^*/4$ and $\psi_i^*=\xi_i-a_i||\psi_i||^2\nu^*/4, i\neq i_0$.\\
(ii) $w_Y(\Delta_0^+\cup \Delta_{-2})=\Delta_0^+\cup\Delta_{2}$,
$\Psi_{\frak{k}}=w_Y(\Psi\setminus\{\nu\}\cup\{-\mu\})$.\\
(iii) If $\lambda\in \frak{t}^*$, then  $\lambda=\lambda'+a\nu^*$ where  
$a=\langle \lambda,\nu^*\rangle/||\nu^*||^2$ and $\lambda' \in (\frak{t} \cap [\frak{l},\frak{l}])^*=\{\nu^*\}^\perp$. \\ 
(iv)  The sum $\sum_{\beta\in \Delta_2}\beta=c\epsilon^*$ where $c=s||\epsilon||^2/2||\epsilon^*||^2$ (with $s=|\Delta_2|$) is an integer.  
\end{lemma}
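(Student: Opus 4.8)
The plan is to establish the four parts in the order (iii), (i), (ii), (iv), since each feeds the next, and all of them reduce to linear algebra in $\frak{t}^*$ resting on a single observation: $\nu^*=\xi_{i_0}$ is orthogonal to every simple root of $\frak{g}$ except $\nu$. For (iii) I would first record that the defining relation $2\langle\xi_{i_0},\psi_j\rangle/||\psi_j||^2=\delta_{i_0,j}$ makes $\nu^*$ orthogonal to each $\psi_j$ with $j\neq i_0$, hence to the span of $\Delta_0=\Delta_{\frak{l}}$, which is exactly $(\frak{t}\cap[\frak{l},\frak{l}])^*$. As $\frak{l}$ has one-dimensional centre $\bc H_{\nu^*}$, this span has codimension one in $\frak{t}^*$, so $\{\nu^*\}^\perp=(\frak{t}\cap[\frak{l},\frak{l}])^*$; writing $\lambda=\lambda'+a\nu^*$ with $\lambda'\perp\nu^*$ and pairing with $\nu^*$ gives $a=\langle\lambda,\nu^*\rangle/||\nu^*||^2$.

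For (i) I would solve the defining equations of the fundamental weights of $\frak{k}$ relative to $\Psi_{\frak{k}}=(\Psi\setminus\{\nu\})\cup\{\epsilon\}$. Two scalar pairings do all the work: $\langle\nu^*,\nu\rangle=||\nu||^2/2=1$ (from the normalization $||\nu||^2=2$), and $\langle\nu^*,\epsilon\rangle=2$ (since $\epsilon\in\Delta_2$ has $\nu$-coefficient $2$ while $\nu^*\perp\psi_j$ for $j\neq i_0$). Orthogonality of $\epsilon^*$ to all $\psi_j$, $j\neq i_0$, forces $\epsilon^*=t\nu^*$ by (iii), and then $2\langle\epsilon^*,\epsilon\rangle/||\epsilon||^2=1$ yields $t=||\epsilon||^2/4$. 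For $i\neq i_0$, the difference $\psi_i^*-\xi_i$ is orthogonal to every $\psi_j$ with $j\neq i_0$, hence a multiple of $\nu^*$; imposing $\langle\psi_i^*,\epsilon\rangle=0$ and using $\langle\xi_i,\epsilon\rangle=a_i||\psi_i||^2/2$ pins the multiple down as $-a_i||\psi_i||^2/4$.

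For (ii) I would simply track $w_Y=w^0_{\frak{k}}w^0_{\frak{l}}$ on the positive system $\Delta_0^+\cup\Delta_{-2}$, whose simple roots are $(\Psi\setminus\{\nu\})\cup\{-\mu\}$. Every generator $s_\alpha$ of the Weyl group $W_{\frak{l}}$ of $\frak{l}$ has $\alpha\in\Delta_0$, so it preserves $n_\nu$ and hence each $\Delta_i$; thus $w^0_{\frak{l}}$ sends $\Delta_0^+\cup\Delta_{-2}$ to $\Delta_0^-\cup\Delta_{-2}=\Delta_{\frak{k}}^-$, and then $w^0_{\frak{k}}$ sends $\Delta_{\frak{k}}^-$ to $\Delta_{\frak{k}}^+=\Delta_0^+\cup\Delta_2$. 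Since any element of a Weyl group carries the simple roots of a positive system to the simple roots of its image, this simultaneously proves $w_Y(\Delta_0^+\cup\Delta_{-2})=\Delta_0^+\cup\Delta_2$ and identifies $w_Y((\Psi\setminus\{\nu\})\cup\{-\mu\})=\Psi_{\frak{k}}$.

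Finally, for (iv), $W_{\frak{l}}$ permutes $\Delta_2$ (same preservation of $n_\nu$), so $\sum_{\beta\in\Delta_2}\beta$ is $W_{\frak{l}}$-invariant, hence orthogonal to $\Delta_0$ and thus a multiple of $\nu^*$ by (iii). Each $\beta\in\Delta_2$ satisfies $\langle\beta,\nu^*\rangle=2$, giving $\sum_{\beta}\beta=(2s/||\nu^*||^2)\nu^*$; substituting $\nu^*=4\epsilon^*/||\epsilon||^2$ from (i) together with $||\epsilon^*||^2=||\epsilon||^4||\nu^*||^2/16$ rewrites this as $c\epsilon^*$ with $c=s||\epsilon||^2/(2||\epsilon^*||^2)$. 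Integrality is then free: $c=2\langle\sum_\beta\beta,\epsilon\rangle/||\epsilon||^2$ is a Cartan integer because $\sum_\beta\beta$ lies in the root lattice of $\frak{k}$ and $\epsilon\in\Delta_{\frak{k}}$. The lemma is largely bookkeeping; the two points deserving real care are keeping the two $\frak{k}$-positive systems straight in (ii) and recognizing the final coefficient in (iv) as a coroot pairing, which is what delivers integrality without further computation.
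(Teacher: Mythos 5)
Your proof is correct and, for part (iv) --- the only part the paper proves explicitly (it declares (i)--(iii) straightforward) --- it takes essentially the paper's route: both arguments show $\sum_{\beta\in\Delta_2}\beta$ is orthogonal to $\Delta_0$ and hence a multiple of $\epsilon^*$ (you via $W_{\frak{l}}$-invariance, the paper via the top exterior power of $\frak{u}_2$ being a one-dimensional $\frak{l}$-module), and both get integrality of $c$ from the fact that the sum lies in the weight lattice of $\frak{k}$. Your worked-out versions of (i)--(iii) are exactly the ``straightforward'' computations the paper has in mind.
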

\begin{proof}
We will only prove (iv), the proofs of the remaining parts being straightforward.  

Observe that if $E$ is a finite dimensional representation 
of $\frak{l}$, then the sum $\lambda$ of all weights of $E$, counted with multiplicity, is a multiple of $\epsilon^*$. 
This follows from the fact that the top-exterior $\Lambda^{\dim (E)}(E)$ is a one dimensional representation of $\frak{l}$ isomorphic to $\bc_\lambda$.  Applying this 
to $\frak{u}_{2}$, we obtain that $\sum_{\beta\in \Delta_2}\beta=c\epsilon^*$.  Clearly $c$ is an integer 
since the $\beta$ are roots of $\frak{k}$ and so $\sum_{\beta\in\Delta_2}\beta$ is in the weight lattice. 
\end{proof}

\noindent 
{\it Example}: Consider the group $G_0=Sp(2,1)$.  The 
non-compact root in the Bourbaki root order of 
$\frak{sp}(3,\bc)$ is $\nu=\psi_2$.  Also $K_0=Sp(2)\times Sp(1), K_1=Sp(2), L_1=U(2), L_2=K_2=Sp(1), \Delta_0^+=\{\psi_1,\psi_3\}, \Delta_1=\{\psi_2,\psi_1+\psi_2,\psi_1+\psi_2+\psi_3,\psi_2+\psi_3\}, 
\Delta_2=\{\mu=2\psi_1+2\psi_2+\psi_3, \psi_1+2\psi_2+\psi_3, 2\psi_2+\psi_3=\epsilon\}.$  Furthermore, 
$\Psi_{\frak{k}}=\{\psi_1,\psi_3,\epsilon\}$ where $\langle\psi_3,\epsilon\rangle=0, \langle\psi_1,\epsilon\rangle = -2,$ and, $\psi_1^*=\xi_1, \psi_3^*=\xi_3-\xi_2$ and $\epsilon^*=\nu^*$.  Finally $c=3.$ 
%%%%  To be verified %%%%

\begin{remark}\label{spin}{\em 
(i) The parity of $c$ will be relevant for our purposes. We 
give an interpretation of it in terms of the existence of spin structures on $Y=K_0/L_0$.  
Recall that $\pi_1(T_0)$ is the kernel of the exponential map $\frak{t}_0\to T_0$.  Thus 
$H^1(T_0;\mathbb{Z})\cong \hom(\pi_1(T_0),\mathbb{Z})$ is a lattice in $\frak{t}_0^*$.   This is the 
weight lattice of $\frak{k}_0$ (with respect to $\frak{t}_0$) since 
$K_0$ is simply connected.   
The cohomology group $H^2(Y;\mathbb{Z})$ is naturally isomorphic to $\mathbb{Z}[\epsilon^*]\cong \mathbb{Z},$
the quotient of the weight lattice of $\frak{k}_0$ by the weight lattice of $[\frak{l}_0,\frak{l}_0]$.    (See \cite[\S14.2]{bh}.)
If $\lambda$ is a weight of $\frak{k}_0$ its class in $H^2(Y;\bz)$ 
is denoted by $[\lambda]$.  Thus $[\lambda]=2(\langle \lambda, \epsilon\rangle/||\epsilon||^2) [\epsilon^*]$.  
The holomorphic tangent bundle $\mathcal{T}Y$ is the bundle associated to the $L_0$-representation $\frak{u}_{2}=\sum_{\beta\in \Delta_2} \frak{g}_\beta$.  This implies that $c_1(Y)$, first Chern class of $Y$,  
equals $\sum_{\beta\in \Delta_2}[\beta]=c[\epsilon^*]\in H^2(Y;\mathbb{Z})$.   
Consequently $Y$ admits a spin structure if and only if $c$ is even.  The value of $c$ can be explicitly 
computed.  
(See, for example, \cite[\S 16]{bh}.) This leads to the following conclusion. 
The complex Grassmann 
variety $\bc G_p(\bc^{p+q})=SU(p+q)/S(U(p)\times U(q))$ admits a spin structure if and only if $p+q$ is even and that the complex quadric $SO(2+p)/SO(2)\times SO(p)$ admits a spin structure 
precisely when $p$ is even. The orthogonal Grassmann 
variety $SO(2p)/U(p)$ admits a spin structure for all $p.$ 
The symplectic Grassmann variety $Sp(p)/U(p)$ admits a spin structure if and only if $p$ is odd.  
The Hermitian 
symmetric spaces $E_6/(Spin(10)\times SO(2))$ and 
$E_7/(E_6\times SO(2))$ admit spin structures.\\

(ii) The highest weight of any irreducible 
$L_0$-submodule of $E_\gamma\otimes S^m(\frak{u}_{-1})$ is of the form $\gamma+\phi$ where $\phi$ is a weight of $S^m(\frak{u}_{-1})$. Thus $\phi=\alpha_1+\cdots+\alpha_m$ for suitable $\alpha_i$ in $\Delta_{-1}$ (not necessarily distinct). 
Now if $\alpha \in \Delta_{-1}$ and $\beta \in \Delta_2$, then $\beta - \alpha$ is not a root. Hence $\langle \alpha, \beta\rangle \leq 0$ for all $\alpha \in \Delta_{-1}, \beta \in \Delta_2$. It follows that $\langle\gamma+\rho_{\frak{k}},\beta\rangle \leq \langle \gamma+\rho_{\frak{g}},\beta\rangle $ and $\langle \phi,\beta\rangle\leq 0$ for all $\beta\in \Delta_2$. Since $\langle \gamma +\rho_\frak{g},\beta\rangle < 0$ for all $\beta\in \Delta_2$, therefore $\langle \gamma+\rho_{\frak{k}}, \beta \rangle <0$ and $\langle \gamma+\phi+\rho_{\frak{k}},\beta\rangle <0$ for all $\beta \in \Delta_2$. Hence, by the Borel-Weil-Bott theorem, 
the highest weight of $H^s(Y; \mathbb{E}_{\gamma+\phi})$ equals $w_Y(\gamma+\phi+\rho_{\frak{k}})-\rho_{\frak{k}}$. 
We shall make use of this remark in the sequel without explicit reference 
to it.  }
\end{remark}

\subsection{Classification of Borel-de Siebenthal root orders}\label{ss:bdsrorder}
The complete classification of Borel-de Siebenthal root orders is given in \cite[\S3]{ow}.  
For the convenience of the reader we recall here, 
in brief, their classification.  
%We list the quaternionic and non-quaternionic cases separaterly in \S\ref{s:appendix1} (Appendix~1).

Let $\frak{g}_0$ be a non-compact real simple Lie algebra satisfying the conditions of \ref{hypothesis}. Having fixed a fundamental Cartan subalgebra  $\frak{t}_0\subset \frak{g}_0$; a positive 
root system of $(\frak{g},\frak{t})$ containing exactly one non-compact simple root $\nu$, is Borel-de Siebenthal if the coefficient of $\nu$ in the highest root is 2. Conversely, let $\frak{g}$ be a complex simple Lie algebra. Choose a Cartan subalgebra $\frak{t}\subset \frak{g}$ and a positive root system of $(\frak{g}, \frak{t})$. If there exists a simple root $\nu$ whose coefficient in the highest root is 2, then $\nu$ determines uniquely (up to an inner automorphism) a non-compact real form $\frak{g}_0$ of $\frak{g}$ satisfying the conditions of \ref{hypothesis} such that the positive system is a Borel-de Siebenthal positive system of $\frak{g}_0$.

If $\Psi$ is  the set of simple roots of a Borel-de Siebenthal positive system of $\frak{g}_0$ and $\nu \in \Psi$ is the unique non-compact root, we denote the Borel-de Siebenthal root order by $(\Psi, \nu)$. Corresponding to $\frak{g}_0$, we can have several Borel-de Siebenthal root orders. Given one such, we have its negative $(-\Psi, -\nu)$.   The Borel-de Siebenthal root orders up to sign changes
are tabulated in Appendix~1 (\S\ref{s:appendix1}).

The quaternionic case is characterized by the property 
that highest root $\mu$ is orthogonal to {\it all} the compact simple roots and hence $-\mu$ is adjacent to the simple non-compact root $\nu$ in the extended Dynkin diagram of $\frak{g}$.  

\ignore{    %%%%%%%%%%%%%%%%%%%%%%%%
(a)  {\it Quaternionic type:} We have $\frak{k}_1=\frak{su}(2), \frak{l}_1=\frak{so}(2)=i\mathbb{R}\nu^*$.  Also $Y=\mathbb{P}^1$. $X=Y^*=SU(1,1)/U(1),$ the unit disk in $\bc$.  The condition $w^0_{\frak{k}}(\epsilon)=-\epsilon$ is trivially valid. 
%In this case $Y$ admits a spin structure. 
\begin{enumerate}
\item $\frak{g}_0=\frak{so}(4,2l-3), l>2$.  Then $\frak{g}$ is of type $B_l$ and $\nu=\psi_2$. $\frak{l}_2=\frak{sp}(1)\oplus \frak{so}(2l-3). \ \  \mathcal{A}=\bc[f], 
\deg(f)=4.$   \\

\item $\frak{g}_0=\frak{so}(4,1)$.  Then $\frak{g}$ is of 
type $B_2$, $\nu=\psi_2, \frak{l}_2=\frak{sp}(1)$. 
$\mathcal{A}=\bc.$ \\

\item  $\frak{g}_0=\frak{sp}(1,l-1), l>1.$ Then $\frak{g}$ is 
of type $C_l, \nu=\psi_1, \frak{l}_2=\frak{sp}(l-1)$. 
$\mathcal{A}=\bc. $\\

\item $\frak{g}_0=\frak{so}(4, 2l-4), l>4$.  $\frak{g}$
is of type $D_l, \nu=\psi_2, \frak{l}_2=\frak{sp}(1)\oplus 
\frak{so}(2l-4). \ \ \mathcal{A}=\bc[f], \deg(f)=4$. \\

\item $\frak{g}_0=\frak{so}(4,4)$. $\frak{g}$ is of type 
$D_4$, $\nu=\psi_2, \frak{l}_2=\frak{sp}(1)\oplus \frak{sp}(1)\oplus \frak{sp}(1).\ \  \mathcal{A}=\bc[f], \deg(f)=4.$\\

\item $\frak{g}_0=\frak{g}_{2; A_1,A_1}$, the split real form of the exceptional Lie algebra of type $G_2$. 
$\frak{g}=\frak{g}_2, \nu=\psi_2, \frak{l}_2=\frak{sp}(1).\ \  \mathcal{A}=\bc[f], \deg(f)=4.$\\

\item $\frak{g}_0=\frak{f}_{4;A_1,C_3},$  the split 
real form of the exceptional Lie algebra of type $F_4$.   
$\frak{g}=\frak{f}_4, \nu=\psi_1, \frak{l}_2=\frak{sp}(3). \ \ \mathcal{A}=\bc[f], \deg(f)=4.$\\

\item $\frak{g}_0=\frak{e}_{6;A_1,A_5,2}$.  $\frak{g}=\frak{e}_6$, the exceptional Lie algebra. $\nu=\psi_2, 
\frak{l}_2=\frak{su}(6). \ \ \mathcal{A}=\bc[f], \deg(f)=4.$ \\

\item $\frak{g}_0=\frak{e}_{7;A_1, D_6,1}$. $\frak{g}=
\frak{e}_7, \nu=\psi_1, \frak{l}_2=\frak{so}(12). \ \ 
\mathcal{A}=\bc[f], \deg(f)=4.$\\

\item $\frak{g}_0=\frak{e}_{8; A_1, E_7}$. $\frak{g}=\frak{e}_8, \nu=\psi_8, \frak{l}_2^{\mathbb{C}}=\frak{e}_7. \ \ \mathcal{A}=\bc[f], \deg(f)=4.$ 
\end{enumerate}

\noindent
(b) {\it  Non-quaternionic type:}
\begin{enumerate}
\item $\frak{g}_0=\frak{so}(2p, 2l-2p+1)$, $2<p<l, l> 3$. $\frak{g}$ is of type $B_l$, $ \nu=\psi_p, \frak{k}_1=\frak{so}(2p), \frak{l}_1=\frak{u}(p), \frak{l}_2=\frak{s0}(2l-2p+1)$.  The variety $Y=SO(2p)/U(p),  X=SO^*(2p)/U(p)$.    $w^0_{\frak{k}}(\epsilon)=-\epsilon$ if and only if $p$ is even.  $\mathcal{A}=\bc[f]$ (with $\deg(f)=2p$) if and only if $3p\leq 2l+1$. \\

\item $\frak{g}_0=\frak{so}(2l,1), l>2$. $\frak{g}$ is of type $B_l$,  $\nu=\psi_l, \frak{k}_0=\frak{k}_1=\frak{so}(2l), \frak{l}_1=\frak{u}(l).$  The variety $Y=SO(2l)/U(l), X=SO^*(2l)/U(l)$. $w^0_{\frak{k}}(\epsilon)=-\epsilon$ if and only if $l$ is even.  $\mathcal{A}=\bc$. \\

\item $\frak{g}_0=\frak{sp}(p, l-p)$, $l> 2, 1<p<l.$  $\frak{g}$ is of type $C_l$, $ \nu=\psi_p, \frak{k}_1=\frak{sp}(p), \frak{l}_1=\frak{u}(p), \frak{l}_2=\frak{sp}(l-p)$, and $Y=Sp(p)/U(p), X=Sp(p,\mathbb{R})/U(p). \ \ 
w^0_{\frak{k}}(\epsilon)=-\epsilon$. $\mathcal{A}=\bc[f]$, (with $\deg(f)=p$) 
if and only if $3p\leq 2l$ and $p$ even.\\

\item $\frak{g}_0=\frak{so}(2l-4, 4), l> 4$. $\frak{g}$ is of type $D_l$, $ \nu=\psi_{l-2}, \frak{k}_1=\frak{so}(2l-4), \frak{l}_1=\frak{u}(l-2), \frak{l}_2=\frak{su}(2)\oplus \frak{su}(2)$.  The variety $Y=SO(2l-4)/U(l-2), 
X=SO^*(2l-4)/U(l-2).\ \ w^0_{\frak{k}}(\epsilon)=-\epsilon$ if and only if $l$ is even.   $\mathcal{A}=\bc$ if $l>6$. When $l=5,6$, $\mathcal{A}
=\bc[f]$ with $\deg(f)=6, 8$ respectively. 
\\

\item $\frak{g}_0=\frak{so}(2p,2l-2p), 2<p<l-2, l>5.$  $\frak{g}$ is of type $D_l$, $\nu=\psi_p$, $\frak{k}_1=
\frak{so}(2p), \frak{l}_1=\frak{u}(p)$, $\frak{l}_2=\frak{so}(2l-2p)$. $Y=SO(2p)/U(p), X=SO^*(2p)/U(p). \ \ w^0_{\frak{k}}(\epsilon)=-\epsilon$ if and only if $p$ is even. 
$\mathcal{A}=\bc[f]$ (with $\deg(f)=2p$) if and only if $3p\leq 2l$.\\

\item $\frak{g}_0=\frak{f}_{4;B_4},$ the real form of $\frak{f}_4$ having $\frak{k}_0\cong \frak{so}(9)$ as a maximal compactly embedded subalgebra. $\nu=\psi_4$ and  $\frak{k}_0=\frak{k}_1, \frak{l}_1=i\mathbb{R}\nu^*\oplus \frak{so}(7)$. $Y=SO(9)/(SO(7)\times SO(2)), X=SO_0(2,7)/(SO(2)\times SO(7)). \ \ w^0_{\frak{k}}(\epsilon)=-\epsilon$.  $\mathcal{A}=\bc[f],  \deg(f)=2$. \\
%$Y$ does not admit a spin structure.\\

\item $\frak{g}_0=\frak{e}_{6;A_1,A_5,1}$, a real 
form of $\frak{e}_6$ with $\nu=\psi_3$. $\frak{k}_1=\frak{su}(6), \frak{l}_1=\frak{su}(5)\oplus i\mathbb{R}\nu^*, \frak{l}_2= \frak{su}(2)$. $Y=
\mathbb{P}^5, X=SU(1,5)/S(U(1)\times U(5)). \ \ w^0_{\frak{k}}(\epsilon)\neq-\epsilon$.  $\mathcal{A}=\bc$. \\
%$Y$ admits a spin structure.\\

\item $\frak{g}_0=\frak{e}_{7;A_1,D_6,2}$, a real form 
of $\frak{e}_7$ with $\nu=\psi_6$. $\frak{k}_1=\frak{so}(12), \frak{l}_1=\frak{so}(10)\oplus i\mathbb{R}\nu^*$, $\frak{l}_2=\frak{sp}(1)$. $Y=SO(12)/SO(2)\times SO(10), X=SO_0(2, 10)/(SO(2)\times SO(10)). \ \ w^0_{\frak{k}}(\epsilon)=-\epsilon$. $\mathcal{A}=\bc$.\\
%$Y$ admits a spin structure.  \\

\item $\frak{g}_0=\frak{e}_{7;A_7}$, a real form of 
$\frak{e}_7$ with $\nu=\psi_2$. $\frak{k}_0=\frak{k}_1=\frak{su}(8)$, $\frak{l}_1= \frak{su}(7)\oplus i\mathbb{R}\nu^*$. The variety $Y=\mathbb{P}^7, 
X=SU(1,7)/S(U(1)\times U(7)). \ \ w^0_{\frak{k}}(\epsilon)\neq-\epsilon$.   $\mathcal{A}=\bc[f], \deg(f)=7$.\\
%$Y$ admits a spin structure.\\

\item 
$\frak{g}_0=\frak{e}_{8;D_8}$, a real form of $\frak{e}_8$ with $\nu=\psi_1$. $\frak{k}_0=\frak{k}_1=\frak{so}(16)$, $\frak{l}_1=i\mathbb{R}\nu^*\oplus \frak{so}(14)$. 
$Y=SO(16)/SO(2)\times SO(14), X=SO_0(2,14)/(SO(2)\times SO(14)). \ \ w^0_{\frak{k}}(\epsilon)=-\epsilon$.  
$\mathcal{A}=\bc[f], \deg(f)=8$.
%$Y$ admits a spin structure.
\end{enumerate}
 } %%%%%%%%%%%%%%%%%%%%%%%%%%%%%    End ignore %%%%%%%%%%%%%%%%%%%%%%%%%%%%%%%%

\subsection{Relative invariants of $(\frak{u}_1,L)$}  The action of $L=L_0^\bc$ on $\frak{u}_1$ is known 
to have a Zariski dense orbit.  It follows that the coordinate ring $\bc[\frak{u}_1]=S^*(\frak{u}_{-1})$ 
has no non-constant invariant functions, that is, $\bc[\frak{u}_{1}]^L=\bc$.  However, it is possible that $\frak{u}_1$ has non-zero relative invariants, that is, an $h\in \bc[\frak{u}_1]$ such that $x.h=\chi(x)h, x\in L,$ for some rational character $\chi:L\lr \bc^*$.   It can be seen that the subalgebra $\mathcal{A}(\frak{u}_{1},L)\subset \mathbb{C}[\frak{u}_{1}]$ of all relative invariants is either $\bc$ or is a polynomial algebra $\bc[f]$ for a suitable (non-zero) homogeneous polynomial function $f\in \bc[\frak{u}_1]$.  It is clear that a homogeneous function $h$ belongs to $\mathcal{A}(\frak{u}_1,L)$ if and only if 
$\bc h$ is an $L$-submodule of $S^m(\frak{u}_{-1})$ where $m=\deg(h)$.  \O rsted and Wolf \cite{ow} determined when $\mathcal{A}(\frak{u}_1, L)$ is a polynomial algebra $\bc[f]$   and described in such cases the generator $f$ in detail. See also \cite{sk}.

\begin{proposition} \label{relativeinvariants} 
Let $\Delta^+$ be a Borel-de Siebenthal positive system of $(\frak{g},\frak{t})$ listed above.  
If  $\frak{g}_0=\frak{so}(4,1), \frak{sp}(1,l-1) ( with\  l>1 ), \frak{e}_{6;A_1,A_5,1}, \frak{e}_{7;A_1,D_6,2}, \frak{g}_0=\frak{so}(2p, r)$ with $p>r\geq 1$,  $\frak{g}_0=\frak{sp}(p,q)$ where $p>2q>0$ or $p$ is odd, then $\mathcal{A}(\frak{u}_1, L)=\mathbb{C}$.  In all the remaining cases $\mathcal{A}(\frak{u}_1,L)=\bc[f]$, a  polynomial algebra where $\deg(f)>0$.   \hfill $\Box$

In the case when $\frak{g}_0=\frak{so}(2l,1),$ or  $\frak{sp}(1,l-1)$, the $L_0$-representation $S^m(\frak{u}_{-1})$ is irreducible 
for all $m\geq 0$. 
\end{proposition}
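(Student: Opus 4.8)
The plan is to prove the proposition in two stages: first the structural dichotomy that $\mathcal{A}(\frak{u}_1,L)$ is either $\bc$ or a polynomial ring $\bc[f]$, together with the determination of which alternative holds; and then the stronger irreducibility statement in the two distinguished cases $\frak{so}(2l,1)$ and $\frak{sp}(1,l-1)$.

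For the dichotomy, recall that $L=L_0^\bc$ is connected reductive with one-dimensional centre $\frak{z}(\frak{l})=\bc H_{\nu^*}$ and that $(\frak{u}_1,L)$ is prehomogeneous, with Zariski-dense orbit $\Omega$. Any relative invariant $h\in\bc[\frak{u}_1]=S^*(\frak{u}_{-1})$ transforms under a rational character $\chi\colon L\to\bc^*$; since $[L,L]$ is semisimple it admits no nontrivial characters, so $\chi$ is trivial on $[L,L]$ and $h$ is a homogeneous $[L,L]$-invariant, and conversely every homogeneous $[L,L]$-invariant is a relative invariant because $Z(L)^\circ$ acts by scalars. By the general theory of prehomogeneous vector spaces \cite{sk}, the ring of relative invariants is the polynomial ring $\bc[f_1,\ldots,f_r]$ generated by the irreducible relative invariants, where $r$ equals the number of irreducible components of codimension one of $\frak{u}_1\setminus\Omega$, equivalently the rank of the group of characters $\chi$ realized by relative invariants. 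As each such $\chi$ is trivial on $[L,L]$, restriction to $Z(L)^\circ\cong\bc^*$ is injective, so this character group has rank $\le 1$; hence $\mathcal{A}(\frak{u}_1,L)$ is $\bc$ (if $r=0$) or $\bc[f]$ for a single homogeneous $f$ (if $r=1$).

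To decide the alternative and compute $\deg f$, I would identify $(\frak{u}_1,L)$ explicitly in each Borel-de Siebenthal root order of \S\ref{ss:bdsrorder} and match it against the Sato--Kimura classification, as done by \O rsted--Wolf \cite{ow}. For $\frak{g}_0=\frak{so}(2p,r)$ one has $[L,L]=\SL(p,\bc)\times\SO(r,\bc)$ acting on $\frak{u}_1\cong\bc^p\otimes\bc^r$, with basic relative invariant the Gram determinant $\det(X^{t}JX)$ of degree $2p$; this is generically nonzero, hence non-constant, exactly when $r\ge p$, so $\mathcal{A}=\bc$ precisely when $p>r$. For $\frak{g}_0=\frak{sp}(p,q)$ one has $[L,L]=\SL(p,\bc)\times\Sp(q,\bc)$ acting on $\frak{u}_1\cong\bc^p\otimes\bc^{2q}$, and the candidate invariant is the Pfaffian $\mathrm{Pf}(X^{t}\Omega X)$ of the alternating $p\times p$ matrix built from the symplectic form $\Omega$; this exists and is non-constant precisely when $p$ is even and $p\le 2q$, so $\mathcal{A}=\bc$ exactly when $p$ is odd or $p>2q$. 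The remaining small cases $\frak{so}(4,1)$, $\frak{sp}(1,l-1)$, $\frak{e}_{6;A_1,A_5,1}$, $\frak{e}_{7;A_1,D_6,2}$ are checked directly to have no nonconstant $[L,L]$-invariant. The main obstacle here is not conceptual but the unavoidable case-by-case bookkeeping: one must run through the finite list of root orders, identify the classical prehomogeneous space in each, and read off the relevant dimension and parity conditions; no individual verification is difficult.

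Finally, for the irreducibility of $S^m(\frak{u}_{-1})$ I would compute $\frak{u}_{-1}$ as an $L_0$-module in the two cases. For $\frak{so}(2l,1)$, the complexification $\frak{g}$ is of type $B_l$ with $\nu=\psi_l$ and $\frak{l}=\frak{gl}(l,\bc)$, $L_0=U(l)$; in standard coordinates $\Delta_1=\{e_1,\ldots,e_l\}$, so $\frak{u}_1\cong\bc^l$ is the standard $\gl(l)$-module and $\frak{u}_{-1}\cong(\bc^l)^*$. Then $S^m(\frak{u}_{-1})\cong(S^m\bc^l)^*$ is the irreducible $\gl(l)$-module of highest weight $(0,\ldots,0,-m)$, so it is irreducible for all $m\ge 0$. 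For $\frak{sp}(1,l-1)$, $\frak{g}$ is of type $C_l$ with $\nu=\psi_1$ and $L_0=U(1)\times\Sp(l-1)$; in standard coordinates $\Delta_1=\{e_1\pm e_j\mid 2\le j\le l\}$, so $\frak{u}_1\cong\bc_{(1)}\otimes W$ and $\frak{u}_{-1}\cong\bc_{(-1)}\otimes W$, where $W=\bc^{2(l-1)}$ is the self-dual defining representation of $\Sp(l-1)$ and $\bc_{(\pm1)}$ are the characters of $U(1)$. Thus $S^m(\frak{u}_{-1})\cong\bc_{(-m)}\otimes S^m(W)$, and since the form on $W$ is alternating there is no invariant symmetric bilinear form on $W$, hence no nonzero $\Sp(l-1)$-contraction $S^m(W)\to S^{m-2}(W)$; therefore $S^m(W)$ is the irreducible module of highest weight $m\varpi_1$ and $S^m(\frak{u}_{-1})$ is irreducible for every $m\ge 0$. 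Both conclusions are consistent with $\mathcal{A}(\frak{u}_1,L)=\bc$ found above, since an irreducible $S^m(\frak{u}_{-1})$ of dimension greater than one contains no $L_0$-fixed line.
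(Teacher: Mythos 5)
Your proposal is correct and follows the same overall skeleton as the paper, but it fills in more and differs in one genuine way. The paper's proof of this proposition is very short: everything except the irreducibility claim is delegated to \O rsted--Wolf \cite[\S4]{ow} (and the dichotomy $\mathcal{A}=\bc$ or $\bc[f]$ is stated as known background in \S2.4, with \cite{sk} cited), while the irreducibility is handled by noting that $\frak{u}_{-1}$ is the standard module of $L_0'\cong SU(l)$ in the $\frak{so}(2l,1)$ case, and by a Weyl-dimension-formula computation (comparing $\dim S^m(\bc^{2l-2})$ with $\dim V_{m\varpi_1}$ for $Sp(l-1)$) in the $\frak{sp}(1,l-1)$ case. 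You instead (i) prove the dichotomy from prehomogeneous-vector-space theory (characters trivial on $[L,L]$ embed into the rank-one character group of $Z(L)^\circ$, so there is at most one irreducible relative invariant), which the paper treats as known; (ii) exhibit the basic invariants explicitly in the classical families --- the Gram determinant of degree $2p$ for $\frak{so}(2p,r)$ and the Pfaffian of degree $p$ for $\frak{sp}(p,q)$ --- and your conditions $p\le r$ and ($p$ even, $p\le 2q$) translate exactly into the paper's tabulated conditions $3p\le 2l+1$ and ($p$ even, $3p\le 2l$), so this is a self-contained confirmation of what the paper only cites; and (iii) replace the Weyl dimension formula by the invariant-theoretic observation that the symplectic form is alternating, so symmetric tensors are automatically traceless and $S^m(W)=V_{m\varpi_1}$. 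Point (iii) is the genuinely different route and is conceptually cleaner, but note that as written the inference ``no nonzero contraction $S^m(W)\to S^{m-2}(W)$, therefore irreducible'' is compressed: absence of maps to lower symmetric powers does not by itself rule out reducibility, and you are implicitly invoking the classical fact that the traceless part of a Schur functor of the defining $Sp(2n)$-module is irreducible (equivalently, the first fundamental theorem for $Sp$ forces $\textrm{End}_{Sp}(S^mW)=\bc$ since every Brauer-diagram endomorphism with a contraction kills symmetric tensors, or Littlewood's branching rule, in which only partitions with even column lengths contribute and hence only $\delta=\emptyset$ for a one-row shape). With that standard result cited, your argument is complete and at the same level of rigor as the paper's; your exceptional cases $\frak{e}_{6;A_1,A_5,1}$ and $\frak{e}_{7;A_1,D_6,2}$ are deferred to the classification exactly as the paper defers them.
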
 
\begin{proof} Only the irreducibility of the $L_0$-module $S^m(\frak{u}_{-1})$ when $\frak{g}_0=\frak{so}(2l,1),\frak{sp}(1,l-1)$ needs to be established as the remaining assertions have already been established in \cite[\S4]{ow}.

When  $\frak{g}_0=\frak{so}(2l,1), L_0'\cong SU(l)$ and $\frak{u}_{-1}$, as an $L_0'$-representation, is isomorphic to the standard representation. Hence $S^m(\frak{u}_{-1})$ is irreducible as an $L_0'$-module---consequently as an $L_0$-module---for all $m$.  

When 
$\frak{g}_0=\frak{sp}(1,l-1)$, $L_0'=Sp(l-1)$.  Again 
$\frak{u}_{-1}$, as an $L_0'$-representation, is isomorphic to the standard representation of $Sp(l-1)$ (of dimension $2l-2$). 
Using the Weyl dimension formula, it follows that for any $m\geq 1$, $S^m(\frak{u}_{-1})$ is irreducible as $L_0'$-module and hence as an $L_0$-module.    
\end{proof}

\begin{remark} \label{qodd} {\em 
The centre $\bc H_{\nu^*}\subset \frak{l}$ acts via the character $-\nu^*/||\nu^*||^2=-||\epsilon||^2\epsilon^*/(4||\epsilon^*||^2)$ on the irreducible $\frak{l}$-representation $\frak{u}_{-1}$ and hence by $-k||\epsilon||^2\epsilon^*/(4||\epsilon^*||^2)$ on $S^k(\frak{u}_{-1})$ for all $k$.  
Suppose that $\mathcal{A}(\frak{u}_1,L)=\mathbb{C}[f]$ where $f\in S^k(\frak{u}_{-1})$ with  $\deg(f)=k>0$. Let $E_{q\epsilon^*}=\mathbb{C}f$ be the one-dimensional subrepresentation of $S^k(\frak{u}_{-1})$. Then $q=-k||\epsilon||^2/(4||\epsilon^*||^2)$. 

When $\frak{g}_0=\frak{sp}(p, l-p), 2\leq p\leq 2(l-p)$ with $p$ even, it turns out that $k=\deg(f)=p$ from \cite[\S4]{ow}. In this case $||\epsilon||^2=4$, $\epsilon^*=\nu^*$ and $||\epsilon^*||^2=p$.  Hence $q=-1$. 

When $\frak{g}_0=\frak{f}_{4,B_4}$, $k=\deg(f)=2$ from \cite[\S4]{ow}. In view of our normalization $||\nu||^2=2$, using \cite[Planche VIII]{bourbaki}, a straightforward calculation leads 
to $||\epsilon^*||^2=||\nu^*||^2=2, ||\epsilon||^2=4$ and so 
$q=-1$. }

{\it It follows from Remark \ref{spin} that when $Y$ does not admit a spin structure and $\mathcal{A}(\frak{u}_1,L)=\mathbb{C}[f]$, the value of $q$ is 
odd.}

{\em In fact it turns out that in all the remaining cases for which $\mathcal{A}(\frak{u}_1,L)=\mathbb{C}[f]$, the number $q$ is even.   In view of Remark \ref{spin}(i) we 
interpret this as follows: Denote by $\mathcal{K}_Y$ the canonical bundle of $Y$ and let $\mathbb{E}$ denote the 
line bundle over $Y$ determined by the $L_0$-representation $E:=\mathbb{C}f$.  Then the line bundle $\mathcal{K}_Y\otimes \mathbb{E}$ always admits a square root, that is, $\mathcal{K}_Y\otimes \mathbb{E}=\mathcal{L}\otimes \mathcal{L}$ for a (necessarily unique) line bundle $\mathcal{L}$ over $Y$. }
\end{remark}

%%%%%%%%%%%%%%%%%%%%%%%%%%%%%%%%%%%%%%%%%
%%%%%%%%%%%%%%%%%%%%%%%%%%%%%%%%%%%%%%%%%% 
\section{$L_0$-admissibility of the Borel-de Siebenthal 
discrete series}
%%%%%%%%%%%%%%

We begin by establishing the following proposition 
which implies that there is no loss of generality in 
confining our discussion throughout to the $K_0$-finite 
part of the Borel-de Siebenthal series rather than 
the discrete series itself when the $K_0$-finite part is $L_0$-admissible.
The following proposition is well known---see~\cite[Proposition~1.6]{kobinv98}.

Let $K_0$ be a maximal compact subgroup of a connected semisimple Lie group $G_0$ with finite centre and let $\pi$ be a unitary $K_0$-admissible representation of $G_0$ on a separable complex Hilbert space $\mathcal{H}$. Denote by $\mathcal{H}_{K_0}$ the $K_0$-finite vectors of $\mathcal{H}$ and by $\pi_{K_0}$ the restriction of $\pi$ to $\mathcal{H}_{K_0}$.  Thus $\mathcal{H}_{K_0}$ is dense in $\mathcal{H}$.

\begin{proposition}\label{kfinitepart}
Suppose that $\pi_{K_0}$   
is $L_0$-admissible where $L_0$ is a closed subgroup of $K_0$.   
Then any finite dimensional $L_0$-subrepresentation of 
$\pi$ is contained in $\mathcal{H}_{K_0}$.  In particular, $\pi$ is $L_0$-admissible. 
\end{proposition}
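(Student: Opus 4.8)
The plan is to reduce the whole statement to the analysis of a single $L_0$-isotypic component and then to play off the density of $\mathcal{H}_{K_0}$ against the finiteness hypothesis. Since $L_0$ is a closed subgroup of the compact group $K_0$, it is itself compact, so the unitary $L_0$-representation $\mathcal{H}$ splits as a Hilbert-space direct sum $\mathcal{H}=\hat{\oplus}_{\tau\in\hat{L}_0}\mathcal{H}(\tau)$ of isotypic components, and for each $\tau\in\hat{L}_0$ the orthogonal projection $P_\tau\colon\mathcal{H}\to\mathcal{H}(\tau)$ is the \emph{bounded} operator $P_\tau=d_\tau\int_{L_0}\overline{\chi_\tau(l)}\,\pi(l)\,dl$, where $d_\tau=\dim V_\tau$ and $\chi_\tau$ is the character of $\tau$. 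A finite-dimensional $L_0$-subrepresentation $W\subset\mathcal{H}$ is a finite direct sum of irreducible pieces, each lying inside some $\mathcal{H}(\tau)$; hence it suffices to prove that $\mathcal{H}(\tau)\subseteq\mathcal{H}_{K_0}$, and is finite-dimensional, for every $\tau$.

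The key observation I would establish is that $P_\tau$ preserves $\mathcal{H}_{K_0}$. Indeed, if $v\in\mathcal{H}_{K_0}$ then the span $U$ of the $K_0$-orbit of $v$ is finite-dimensional, $K_0$-stable, and contained in $\mathcal{H}_{K_0}$; since $L_0\subset K_0$, the integrand $l\mapsto\overline{\chi_\tau(l)}\,\pi(l)v$ takes values in the closed (being finite-dimensional) subspace $U$, whence $P_\tau v\in U\subseteq\mathcal{H}_{K_0}$. Consequently $P_\tau(\mathcal{H}_{K_0})\subseteq\mathcal{H}_{K_0}\cap\mathcal{H}(\tau)$. Because $\mathcal{H}_{K_0}$ is a locally $K_0$-finite, hence locally $L_0$-finite, module, this intersection is precisely the algebraic $\tau$-isotypic component of $\mathcal{H}_{K_0}$, which is finite-dimensional by the assumed $L_0$-admissibility of $\pi_{K_0}$.

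Next I would combine density with this finiteness. Since $P_\tau$ is bounded and $\mathcal{H}_{K_0}$ is dense in $\mathcal{H}$, the image $P_\tau(\mathcal{H}_{K_0})$ is dense in $P_\tau(\mathcal{H})=\mathcal{H}(\tau)$. But $P_\tau(\mathcal{H}_{K_0})$ is contained in the finite-dimensional, hence closed, space $\mathcal{H}_{K_0}\cap\mathcal{H}(\tau)$. Therefore $\mathcal{H}(\tau)=\overline{P_\tau(\mathcal{H}_{K_0})}\subseteq\mathcal{H}_{K_0}\cap\mathcal{H}(\tau)$, forcing equality $\mathcal{H}(\tau)=\mathcal{H}_{K_0}\cap\mathcal{H}(\tau)$. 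In particular each $\mathcal{H}(\tau)$ is finite-dimensional and contained in $\mathcal{H}_{K_0}$, and applying this to every isotypic summand of an arbitrary finite-dimensional $L_0$-subrepresentation $W$ yields $W\subseteq\mathcal{H}_{K_0}$, which is the main assertion.

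For the ``in particular'' clause I would simply note that the multiplicity $m_\tau=\dim_{\mathbb{C}}\hom_{L_0}(V_\tau,\mathcal{H})$ equals $(\dim\mathcal{H}(\tau))/d_\tau$, finite by the previous paragraph, so $\pi$ is $L_0$-admissible. The one place demanding care — and the step I expect to be the main (if mild) obstacle — is the verification that the abstract isotypic projection $P_\tau$ genuinely maps $\mathcal{H}_{K_0}$ into itself, since it is exactly this that lets density be transported into the finite-dimensional subspace $\mathcal{H}_{K_0}\cap\mathcal{H}(\tau)$; everything afterward is a formal consequence of the boundedness of $P_\tau$, the density of $\mathcal{H}_{K_0}$, and the closedness of finite-dimensional subspaces.
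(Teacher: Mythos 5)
Your argument is correct. Note that the paper gives no proof of its own here --- it simply cites Kobayashi (Invent.\ Math.\ \textbf{131} (1998), Proposition 1.6) --- and your proof is the standard one underlying that reference: the isotypic projector $P_\tau=d_\tau\int_{L_0}\overline{\chi_\tau(l)}\,\pi(l)\,dl$ preserves $\mathcal{H}_{K_0}$ because each $K_0$-finite vector generates a finite-dimensional (hence closed) $K_0$-stable subspace, so by boundedness of $P_\tau$ and density of $\mathcal{H}_{K_0}$ the full isotypic component $\mathcal{H}(\tau)$ coincides with the finite-dimensional algebraic one $\mathcal{H}_{K_0}\cap\mathcal{H}(\tau)$. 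All the steps you flag as delicate (closedness of the finite-dimensional target of the integral, the identification of $\mathcal{H}_{K_0}\cap\mathcal{H}(\tau)$ with the algebraic $\tau$-isotypic part of the locally $L_0$-finite module $\mathcal{H}_{K_0}$) are handled correctly, so nothing is missing.
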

For a proof see \cite[Proposition 1.6]{kobinv98}.

%To see this, suppose that $v\in \mathcal{H}$ is contained in an irreducible (finite 
%dimensional) $L_0$-submodule of $\mathcal{H}$.  Then $\sum_{1\leq i\leq m}c_i\pi(x_i)v_0=v$ for some $L_0$-highest weight vector $v_0$ of weight, say, $\lambda$, for suitable $x_i\in L_0, c_i\in \bc$. Let $\{v_j\}$ be an orthonormal basis of $\mathcal{H}$ consisting of $L_0$-weight vectors, obtained by taking union of certain orthonormal bases of $L_0$-isotypic components of $\mathcal{H}_{K_0}.$  Write $v_0=\sum_{j}  a_j v_j$.  It is readily seen that $a_j$ is zero unless $v_j$ is an $L_0$-{\it highest} weight vector of weight $\lambda$.  This means that $v_0$ belongs to the $L_0$-isotypic component of $\pi_{K_0}$ having highest weight $\lambda$.  Since $\pi_{K_0}$ is $L_0$-admissible, it follows that $v_0\in \mathcal{H}_{K_0}$.  Hence $v\in \mathcal{H}_{K_0}$. \end{proof}

For the rest of this section we keep the notations of \S2. 
%Recall that $L_0=L_1\times L_2\subset K_0=K_1\times K_2$ where $L_2=K_2$ is semisimple and $L_1$ is the centralizer of a certain circle subgroup of $K_1$. Denote the derived group of $L_0$ by $L_0'$ so that  $L_0'=L_1'\times L_2$ where $L_1'$ is the derived subgroup of $L_1$. Let $Z(L_0)\cong \bs^1$ be the identity component of the centre of $L_1$.  
Any irreducible finite 
dimensional complex representation $E$ of $L_0=L_1\times L_2$ is isomorphic to a tensor product 
$E_1\otimes E_2$ where $E_j$ is an irreducible representation of $L_j, j=1,2$. In particular, if $E_1$ is one dimensional, then 
it is trivial as an $L_1'$ representation and $L_1$ acts on $E_1$ via a character $\chi:L_1/L_1'\lr \bs^1$.  If $E_2$ 
one dimensional, then it is trivial as an 
$L_2$-representation. 

Applying this observation to $S^k(\frak{u}_{-1})$ we see that one-dimensional $L_0$-subrepresentations of $S^{k}(\frak{u}_{-1})$ are all of the form $\bc h$ where $h\in S^k(\frak{u}_{-1})$ a weight vector which is invariant under the action of $L_1'\times L_2$. 
That is, $h$ is a relative invariant of $(\frak{u}_1,L)$. If $h\in S^k(\frak{u}_{-1})$ is a relative invariant, then so is $h^j$ for any $j\geq 1$. If $\chi = \sum_{\alpha\in \Delta_{-1}} r_\alpha \alpha, r_\alpha\geq 0$ is the weight of a relative invariant $h$, then, as $L_0'$ acts trivially on $\bc h$, we see that $\chi$ is a multiple of $\nu^*$.      

When $\frak{k}_1\cong\frak{su}(2)$ we have $L_1\cong \bs^1$.  Let $\pi$ be a representation of $G_0$ on a separable Hilbert space $\mathcal{H}$.  For example, 
$\pi$ is a Borel-de Siebenthal representation.  We have 
the following:

\begin{lemma}\label{qadmissibility}
Suppose that $\pi$ is $K_1$-admissible where $\frak{k}_1=\frak{su}(2)$.  
Then $\pi$ is $L_0$-admissible if and only if $\pi$ is 
$L_2$-admissible.   
\end{lemma}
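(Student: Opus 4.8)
The plan is to reduce everything to the restriction theory of $K_1\cong\SU(2)$ to its maximal torus $L_1\cong\bs^1$, using that $L_2=K_2$ commutes with $K_1$ and that $K_0=K_1\times L_2$ while $L_0=L_1\times L_2$. First I would pin down the coarse structure of $\pi$ from $K_1$-admissibility. Since $K_1$ is compact and $\pi|_{K_1}$ is unitary, the $U_k$-isotypic decomposition is an orthogonal Hilbert-space direct sum, and because $L_2$ commutes with $K_1$ it preserves each isotypic component. Hence, as a $K_1\times L_2$-module,
\[
\pi\cong\hat{\oplus}_{k\geq 0}\,U_k\otimes M_k,\qquad M_k:=\hom_{K_1}(U_k,\pi),
\]
where each multiplicity space $M_k$ carries a unitary $L_2$-action. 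The hypothesis of $K_1$-admissibility says precisely that $\dim_\bc M_k<\infty$ for all $k$, so every $M_k$ is a finite-dimensional $L_2$-representation; write $[F:M_k]$ for the multiplicity of an irreducible $L_2$-type $F$ in $M_k$.

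Next I would read off the two restrictions. Restricting $U_k$ to $L_1$ gives the multiplicity-free sum of the characters $\chi_k,\chi_{k-2},\dots,\chi_{-k}$, where $\chi_n$ is the weight-$n$ character of $L_1$; thus $\chi_n$ occurs in $U_k|_{L_1}$ exactly when $|n|\leq k$ and $n\equiv k\bmod 2$. Since $L_2$ acts trivially on each $U_k$ in the model above, I obtain the multiplicity formulas
\[
\dim\hom_{L_0}(\chi_n\otimes F,\pi)=\sum_{\substack{k\geq |n|\\ k\equiv n \bmod 2}}[F:M_k],
\qquad
\dim\hom_{L_2}(F,\pi)=\sum_{k\geq 0}(k+1)\,[F:M_k],
\]
the factor $k+1=\dim U_k$ appearing in the second sum because every vector of $U_k$ is $L_2$-fixed. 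By definition, $L_0$-admissibility (resp.\ $L_2$-admissibility) is the finiteness of the left-hand (resp.\ right-hand) sum for all $(n,F)$ (resp.\ all $F$).

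Comparing the two formulas finishes the proof. The implication ``$L_2$-admissible $\Rightarrow$ $L_0$-admissible'' is immediate, since each $L_0$-sum is dominated term-by-term by the corresponding $L_2$-sum. The content is the converse, and this is the step I expect to be the crux: the $L_2$-multiplicity carries the unbounded weight $k+1$, so I must exclude contributions from arbitrarily large $k$. Here I would specialise $n$ to a single residue: taking $n=0$ gives $\sum_{k\ \mathrm{even}}[F:M_k]<\infty$ and taking $n=1$ gives $\sum_{k\ \mathrm{odd}}[F:M_k]<\infty$, using that $0$ is a weight of $U_k$ exactly when $k$ is even and $1$ exactly when $k$ is odd. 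Hence $\sum_k[F:M_k]<\infty$, which forces $[F:M_k]=0$ for all but finitely many $k$; the weighted sum $\sum_k(k+1)[F:M_k]$ is then a finite sum, giving $L_2$-admissibility.

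The \emph{essential point}, and the only place the special feature of $\SU(2)$ enters, is that a single $L_1$-character ($\chi_0$ or $\chi_1$) already meets every $U_k$ of the appropriate parity, so $L_0$-finiteness transfers to the unweighted total multiplicity $\sum_k[F:M_k]$; the growth of $\dim U_k$ is then harmless because only finitely many $M_k$ can contain $F$. I would note that the argument uses nothing about $\pi$ beyond its being a unitary $K_1$-admissible representation, so it applies in particular to the Borel--de Siebenthal series.
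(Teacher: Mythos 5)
Your proof is correct and rests on the same key observation as the paper's: restricted to $L_1\cong\bs^1$, the module $U_k$ contains every character of the appropriate parity up to weight $k$, so a single $L_1$-character meets all $U_k$ of one parity and transfers $L_0$-finiteness to the unweighted sum $\sum_k[F:M_k]$. The paper runs the nontrivial direction as a contradiction (an $L_2$-type $E$ of infinite multiplicity forces $U_{d_j}\otimes E$ to occur for infinitely many $d_j$ of one parity, all sharing a common $L_0$-type), while you package the identical idea into explicit multiplicity formulas via the $K_1\times L_2$-isotypic decomposition, which also lets you avoid the paper's appeal to Proposition~\ref{kfinitepart} to pass to the $K_0$-finite part.
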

\begin{proof}  
We need only prove that $L_0$ admissibility of $\pi$ implies the $L_2$ admissibility.   Note that $L_0'=L_2$. 
Assume that $\pi$ is not $L_2$ admissible.  Say $E$ 
is a $L_2$ type which occurs in 
$\pi$ with infinite multiplicity. In view of  Proposition 
\ref{kfinitepart} and since $L_0'=L_2$, the $L_2$-type $E$ actually occurs in $\pi_{K_0}$ with infinite multiplicity.  Then, denoting 
the irreducible $K_1$-representation of dimension $d+1$ by 
$U_d$, we deduce from $K_1$-admissibility of 
$\pi$ that the irreducible $K_0$-representations $U_{d_j}\otimes E$ occurs in $\pi$ where $(d_j)$ is a strictly increasing sequence of natural numbers. Without loss of generality we assume that all the $d_j$ are of same 
parity. Notice that $U_{c}$ as an $L_1$-module, is a submodule of $U_d$, if $c\leq d$ and $c\equiv d\mod 2$.   It follows that the $L_0$-type $U_{d_1}\otimes E$ occurs in {\it every}  summand of $\oplus_{j\geq 1}U_{d_j}\otimes E$.  Thus $\pi$ is not $L_0$-admissible.  
\end{proof}

{\it Proof of Proposition \ref{inadmissible}:} 
Let $h\in S^k(\frak{u}_{-1})$ be a relative invariant 
for $(\frak{u}_1,L)$ with weight $\chi=r\nu^*$. Denote by $\mathcal{L}$ the holomorphic line bundle $K_0\times_{L_0}\bc h \lr K_0/L_0=Y$.  Then $\mathcal{L}=\mathbb{E}_\chi$ and so $\mathbb{E}_\gamma\otimes \mathcal{L}^{\otimes j}=\mathbb{E}_{\gamma+j\chi}$ is a subbundle of 
the bundle $\mathbb{E}_\gamma\otimes \mathbb{S}^{jk}(\frak{u}_{-1})$ 
for all $j\geq 1$. Hence the $K_0$-module $H^s(Y;\mathbb{E}_{\gamma+j\chi})$ occurs in the Borel-de Siebenthal discrete series  $\pi_{\gamma+\rho_{\frak{g}}}$.  
The lowest weight of the $K_0$-module $H^s(Y;\mathbb{E}_{\gamma+j\chi})$ is  $w_{\frak{l}}^0(\gamma+j\chi+\rho_{\frak{k}})-w_{\frak{k}}^0\rho_{\frak{k}}=
w_{\frak{l}}^0(\gamma_0)+(t\nu^*+jr\nu^*)+\sum_{\alpha\in \Delta_2}\alpha$ where $\chi=r\nu^*$.  
As observed above, $\sum_{\alpha\in \Delta_2}\alpha=2s\nu^*/||\nu^*||^2$.  
Since $\nu^*$ is in the centre of $\frak{l}$, the irreducible $L_0'$ representation with lowest weight $w_{\frak{l}}^0(\gamma_0)$, namely $E_{\gamma_0}$,  occurs in $H^s(Y;\mathbb{E}_{\gamma+j\chi})$ for {\it all} $j\geq 1$. 
It follows that $\pi_{\gamma+\rho_{\frak{g}}}$ is not $L_0'$-admissible.   

It remains to prove the converse assuming $\frak{k}_1\cong \frak{su}(2)$.  We shall suppose that $\pi_{\gamma+\rho_{\frak{g}}}$ is not $L_0'$-admissible and that $S^m(\frak{u}_{-1})$ has no one-dimensional $L_0'$-submodules and arrive at a contradiction.
By Lemma \ref{qadmissibility}, $\pi_{\gamma+\rho_{\frak{g}}}$ is not $L_0$-admissible. By Proposition \ref{kfinitepart}, the $K_0$-finite part of $\pi_{\gamma+\rho_{\frak{g}}}$ is not 
$L_0$-admissible. 
 In view of  Proposition \ref{relativeinvariants} we have $\frak{g}_0=\frak{so}(4,1)$ or $\frak{sp}(1,l-1)$ and the $L_0$-module $S^m(\frak{u}_{-1})$ is irreducible for all $m$.  
The highest weight of $S^m(\frak{u}_{-1})$ as an $L_2$-module is $m (-\nu-a\nu^*)$ where $a\nu^*$ is the character by which $L_1=L_0/L_2\cong\bs^1$ acts on $\frak{u}_{-1}$.  

Now $H^1(\mathbb{P}^1; \mathbb{E}_\gamma\otimes\mathbb{S}^m(\frak{u}_{-1}))
=H^1(\mathbb{P}^1;\mathbb{E}_{(t+ma)\nu^*}\otimes \mathbb{E}_{-m\nu-ma\nu^*}\otimes \mathbb{E}_{\gamma_0})
=H^1(\mathbb{P}^1;\mathbb{E}_{(t+ma)\nu^*})\otimes E_{-m\nu-ma\nu^*}\otimes E_{\gamma_0}$ as a $K_1\times L_2$-module.  Since the $K_0$-finite part of $\pi_{\gamma+\rho_{\frak{g}}}$ is not $L_0$-admissible, there exist a $b$ and an $L_2$-dominant integral weight $\lambda$ such that the $L_0$-type  $E=E_{b\nu^*}\otimes E_\lambda$ occurs in $H^1(\mathbb{P}^1;\mathbb{E}_{(t+ma)\nu^*})\otimes E_{-m\nu-ma\nu^*}\otimes E_{\gamma_0}$ for infinitely many distinct values of $m$.  This implies that $E_{\lambda}$ occurs in $E_{-m\nu-ma\nu^*}\otimes E_{\gamma_0}$ for infinitely many values of $m$.
The highest weights of $L_2$-types occurring in $E_{-m\nu-ma\nu^*}\otimes E_{\gamma_0}$ are all of the form $-m\nu-ma\nu^*+\kappa_m$ where $\kappa_m$ is a weight of $E_{\gamma_0}$.  Thus 
$\lambda=-m\nu-ma\nu^*+\kappa_m$ for infinitely many $m$.  Since $E_{\gamma_0}$ is finite dimensional, it follows that for some weight $\kappa$ of $E_{\gamma_0}$, we have $\lambda-\kappa=-m\nu-ma\nu^*$ for infinitely many values of $m$, which is absurd. 
 \hfill $\Box$

\section{Holomorphic discrete series associated 
to a Borel-de Siebenthal discrete series}

We keep the notations of \S 2.  
Recall that $K_0/L_0$ is an irreducible compact Hermitian 
symmetric space.  Let $K_0^*$ be the dual of $K_0$ in $K$ with 
respect to $L_0$ so that $K_0^*/L_0$ is the non-compact 
irreducible Hermitian symmetric space dual to $K_0/L_0$. Note that  $\frak{k}=Lie(K_0^*)\otimes_{\mathbb{R}}\mathbb{C}$ 
and that $\frak{t}\subset \frak{l}$ is a Cartan subalgebra of $\frak{k}$. 
The sets of compact and non-compact roots of $(Lie(K_0^*),\frak{t}_0)$ are $\Delta_0$ and $\Delta_2\cup \Delta_{-2}$ respectively.  The unique non-compact simple root of $\Psi_{\frak{k}}$ is $\epsilon\in \Delta_2$.

Since the centralizer of $\bc H_{\nu^*}$ in $\frak{k}$ equals $\frak{l}$, the group $K_0^*$ admits holomorphic 
discrete series. 
See \cite[Theorem 6.6, Chapter VI]{knapp}.  The positive system 
$\Delta_{\frak{k}}^+$ is a Borel-de Siebenthal root 
order for $K_0^*$. 

Let $\gamma+\rho_{\frak{g}}$ be the Harish-Chandra parameter 
for a Borel-de Siebenthal discrete series of $G_0$. Thus 
$\gamma$ is the highest weight of an irreducible $L_0$-representation and $\langle \gamma+\rho_{\frak{g}},\beta\rangle <0$ for all $\beta\in \Delta_1\cup \Delta_2$.  
Clearly $\langle \gamma+\rho_{\frak{k}},\alpha\rangle >0$ 
for all positive compact roots $\alpha\in \Delta_0^+$.  We claim that $\langle \gamma+\rho_{\frak{k}},\beta \rangle <0$ for all positive non-compact roots $\beta\in \Delta_2$. To see this, let $\beta_i\in \Delta_i, i=1,2$.  Observe that $\beta_1+\beta_2$ is not 
a root and so  
$\langle \beta_1,\beta_2\rangle\geq 0$.  It follows that $\langle \rho_{\frak{k}},\beta_2\rangle =\langle \rho_{\frak{g}}- 1/2\sum_{\beta_1\in \Delta_1}\beta_1,\beta_2\rangle 
=\langle \rho_{\frak{g}},\beta_2\rangle - 1/2\sum_{\beta_1\in \Delta_1}\langle \beta_1,\beta_2\rangle\leq \langle \rho_{\frak{g}},\beta_2\rangle$.  So  $\langle\gamma+\rho_{\frak{k}},\beta\rangle\leq \langle \gamma+\rho_{\frak{g}},\beta \rangle<0$ for all $\beta\in \Delta_2$.  Thus, by \cite[Theorem 6.6, Ch. VI]{knapp}, $\gamma+\rho_{\frak{k}}$ is the Harish-Chandra parameter for a holomorphic discrete series $\pi_{\gamma+\rho_{\frak{k}}}$ of $K_0^*$, which is naturally associated to the Borel-de Siebenthal discrete series $\pi_{\gamma+\rho_{\frak{g}}}$ of $G_0$.  

The $L_0$-finite part of $\pi_{\gamma+\rho_{\frak{k}}}$ equals 
$E_\gamma\otimes S^*(\frak{u}_{-2})$, where $E_\gamma$ is the 
irreducible $L_0$-representation with highest weight 
$\gamma$. Write $\gamma=\lambda+\kappa$ where $\lambda$ and $\kappa$ are dominant weights of $\frak{l}_1^\bc$ and $\frak{l}_2^\bc$ respectively. 
We have $E_\gamma=E_{\lambda}\otimes E_{\kappa}$. 
Hence $(\pi_{\gamma+\rho_{\frak{k}}})_{L_0}
=E_{\kappa}\otimes(E_{\lambda}\otimes S^*(\frak{u}_{-2}))=E_{\kappa}\otimes (\pi_{\lambda+\rho_{\frak{k}_1^\bc}})_{L_1}$, 
where $\pi_{\lambda+\rho_{\frak{k}_1^\bc}}$ 
is the holomorphic discrete series of $K_1^*$ with 
Harish-Chandra parameter $\lambda+\rho_{\frak{k}_1^\bc}$.  Here $K_1^*$ is 
the Lie subgroup of $K_0^*$ dual to $K_1$.

%%%%%%%%%%%%%%%%%%%%%%%%%%%%%%%%%%%%%%%%%%%%%%%%%%%

\section{Common $L_0$-types in the quaternionic case}
We now focus on the quaternionic case, namely,  
when $Lie(K_1)=\frak{su}(2)$. This case is characterized by 
the property that $-\mu$ is connected to $\nu$ in the 
extended Dynkin diagram of $\frak{g}$. 
 In this case $\Delta_2
=\{\mu\}, 
L_1\cong\mathbb{S}^1, Y=\mathbb{P}^1, L_2=[L_0,L_0]$, and, $\frak{l}'=[\frak{l},\frak{l}]=\frak{l}_2^\bc$.  Also, since both $\mu$ and $\nu^*$ are orthogonal to  
$\frak{l}_2^\bc$,  $\mu$ is a non-zero multiple of $\nu^*$.   Write $\mu=d\nu^*$.  
Since $\mu=2\nu+\beta$ where $\beta$ is a linear combinations of  roots of $\frak{l}_2^\bc$, we obtain  $||\mu||^2=d\langle\nu^*,\mu\rangle=d\langle\nu^*,2\nu\rangle=d||\nu||^2=2d$ as $||\nu||^2=2$.  Since $s_\nu(\mu)=\mu-d\nu$ is a root and since 
$\mu-3\nu$ is not a root, we must have $d=1$ or $2$.  For example, when $\frak{g}_0=\frak{so}(4,2l-3)$ or the split real form of the exceptional Lie algebra $\frak{g}_2$, we have $d=1$, whereas when $\frak{g}_0=\frak{sp}(1,l-1)$, we have $d=2$. 

Clearly $\frak{k}_1^\bc=
\frak{g}_\mu\oplus\bc H_\mu\oplus\frak{g}_{-\mu}\cong\frak{sl}(2,\bc) $.  The fundamental weight of $\frak{k}_1^\bc$ equals $\mu^*:=\mu/2=d\nu^*/2$. 
We shall denote by $U_k$ the $(k+1)$-dimensional $\frak{k}_1^\bc$-module with highest weight $k\mu^*=dk\nu^*/2$.  
Also, $\mathbb{C}_\chi$ denotes the one dimenional $\frak{l}_1^\bc$-module corresponding to a character $\chi\in 
\mathbb{C}\nu^*$. 

Let $\gamma=\gamma_0+t \nu^*$ where $\gamma_0$ is a dominant integral weight of $\frak{l}'=\frak{l}_2^\bc$ and $t$  satisfies the `sufficiently negative' condition (1).   We have the following lemma.  

\begin{lemma} \label{negativity}
Suppose that $\frak{k}_1=\frak{su}(2)$, $\gamma=\gamma_0+t\nu^*$ where $\gamma_0$ is an $\frak{l}'$-dominant weight.  Then $t$ satisfies the `sufficient negativity' condition {\em (1)} if and only 
if the following inequalities hold:\\
\[t<-\frac{d}{4}(|\Delta_1|+2), \textrm{~and}~~
t<-\langle\gamma_0,w^0_{\frak{l}}(\nu)\rangle-(1/2)(\sum a_i||\psi_i||^2)\]
where $w_{\frak{l}}^0(\nu)=\sum a_i\psi_i$ is the highest root in $\Delta_1$.  
\end{lemma}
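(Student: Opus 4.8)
The plan is to unwind the two inequalities in condition~(1) using the quaternionic structure established just before the lemma, namely that $\Delta_2=\{\mu\}$, $\mu=d\nu^*$, $\mu^*=d\nu^*/2$, and $\|\nu\|^2=2$. The key is that in the quaternionic case $\gamma_0\in\frak{l}'=\frak{l}_2^\bc$ is orthogonal to $\nu^*$, so inner products of $\gamma_0+\rho_{\frak{g}}$ against weights in $\mathbb{C}\nu^*$ reduce to inner products of $\rho_{\frak{g}}$ alone against those weights. First I would rewrite the first inequality $t<-\tfrac12\langle\gamma_0+\rho_{\frak{g}},\mu\rangle$. Since $\mu=d\nu^*$ and $\gamma_0\perp\nu^*$, we have $\langle\gamma_0,\mu\rangle=0$, so this becomes $t<-\tfrac{d}{2}\langle\rho_{\frak{g}},\nu^*\rangle$.

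The main computation is therefore to evaluate $\langle\rho_{\frak{g}},\nu^*\rangle$. The standard pairing formula gives $\langle\rho_{\frak{g}},\nu^*\rangle=2\langle\rho_{\frak{g}},\nu^*\rangle/\|\nu\|^2 \cdot \tfrac{\|\nu\|^2}{2}$; more usefully, I would expand $\rho_{\frak{g}}=\tfrac12\sum_{\alpha\in\Delta^+}\alpha$ and pair against $\nu^*$. Because $\nu^*$ is the fundamental weight dual to $\nu$, the quantity $2\langle\alpha,\nu^*\rangle/\|\nu^*\|^2$ is not what appears directly; instead I would use that $\langle\beta,\nu^*\rangle$ picks out (a scalar times) the $\nu$-coefficient $n_\nu(\beta)$ of each positive root $\beta$. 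Concretely, writing $\Delta^+=\Delta_0^+\cup\Delta_1\cup\Delta_2$ with $\nu$-coefficients $0,1,2$ respectively, and noting $\Delta_2=\{\mu\}$ in the quaternionic case, one gets
\[
\langle\rho_{\frak{g}},\nu^*\rangle=\tfrac12\Big(\sum_{\alpha\in\Delta^+}n_\nu(\alpha)\Big)\langle\nu,\nu^*\rangle=\tfrac12\big(|\Delta_1|+2\big),
\]
using $\langle\nu,\nu^*\rangle=\|\nu\|^2/2\cdot(2\langle\nu,\nu^*\rangle/\|\nu\|^2)=1$ after the normalization $\|\nu\|^2=2$ and $2\langle\nu,\nu^*\rangle/\|\nu\|^2=1$. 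Substituting into $t<-\tfrac{d}{2}\langle\rho_{\frak{g}},\nu^*\rangle$ yields exactly $t<-\tfrac{d}{4}(|\Delta_1|+2)$, the first claimed inequality.

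For the second inequality I would start from $t<-\langle\gamma_0+\rho_{\frak{g}},w^0_{\frak{l}}(\nu)\rangle$ and split it as $-\langle\gamma_0,w^0_{\frak{l}}(\nu)\rangle-\langle\rho_{\frak{g}},w^0_{\frak{l}}(\nu)\rangle$. The first term already matches the target, so it remains to show $\langle\rho_{\frak{g}},w^0_{\frak{l}}(\nu)\rangle=\tfrac12\sum a_i\|\psi_i\|^2$, where $w^0_{\frak{l}}(\nu)=\sum a_i\psi_i$ is identified as the highest root in $\Delta_1$ (this identification follows since $w^0_{\frak{l}}$ is the longest element of the Weyl group of $\frak{l}=\frak{l}_0^\bc$ and sends the lowest weight $\nu$ of the irreducible $L_0$-module $\frak{u}_1$ to its highest weight $\mu-\nu$ wait—more carefully, to the highest weight of $\frak{u}_1$). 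The clean way to evaluate $\langle\rho_{\frak{g}},\beta\rangle$ for $\beta=w^0_{\frak{l}}(\nu)$ is via the standard identity $\langle\rho_{\frak{g}},\psi_i\rangle=\tfrac12\|\psi_i\|^2$ for each simple root $\psi_i$ (since $2\langle\rho_{\frak{g}},\psi_i\rangle/\|\psi_i\|^2=1$); writing $\beta=\sum a_i\psi_i$ and using bilinearity gives $\langle\rho_{\frak{g}},\beta\rangle=\sum a_i\langle\rho_{\frak{g}},\psi_i\rangle=\tfrac12\sum a_i\|\psi_i\|^2$, as required.

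The genuinely delicate step is the identification $w^0_{\frak{l}}(\nu)=\sum a_i\psi_i=$ the highest root of $\Delta_1$, together with the claim that it is legitimate to replace $\langle\gamma_0+\rho_{\frak{g}},\cdot\rangle$ by the cleaner expressions above; here I would lean on the fact that $\gamma_0\perp\nu^*$ (so $\gamma_0$ lives in $(\frak{t}\cap[\frak{l},\frak{l}])^*$, by Lemma~\ref{positivesystem}(iii)) only for the \emph{first} inequality, while for the second the $\gamma_0$ term is retained explicitly and only the $\rho_{\frak{g}}$ piece is simplified. I expect the main obstacle to be pinning down the image $w^0_{\frak{l}}(\nu)$ precisely as the highest root in $\Delta_1$ and verifying it is genuinely a root (not merely a weight) so that the simple-root expansion $\sum a_i\psi_i$ has the $a_i$ appearing in the statement; this rests on $\frak{u}_1$ being an irreducible $\frak{l}$-module with lowest weight $\nu$, as recorded in Section~\ref{ss:lnot}, whose highest weight $w^0_{\frak{l}}(\nu)$ is then the highest root among those with $\nu$-coefficient $1$. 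Once these two pairing computations and the root identification are in place, the equivalence stated in the lemma follows by simply substituting the evaluated inner products back into condition~(1).
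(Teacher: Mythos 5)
Your proposal is correct and follows essentially the same route as the paper: both arguments use $\langle\gamma_0,\mu\rangle=0$ together with $\langle\alpha,\nu^*\rangle=n_\nu(\alpha)\langle\nu,\nu^*\rangle=n_\nu(\alpha)$ to get $\langle\rho_{\frak{g}},\mu\rangle=\frac{d}{2}(|\Delta_1|+2)$, and then $\langle\rho_{\frak{g}},\psi_i\rangle=\frac12\|\psi_i\|^2$ applied to $w^0_{\frak{l}}(\nu)=\sum a_i\psi_i$, the highest weight of the irreducible $\frak{l}$-module $\frak{u}_1$ and hence the highest root in $\Delta_1$. The identification of $w^0_{\frak{l}}(\nu)$ that you flag as delicate is handled in the paper exactly as you resolve it, so there is no gap.
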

\begin{proof}
Since $\gamma_0$ is a dominant integral weight of  $\frak{l}'=\frak{l}_2^\bc$ and since $\mu=d\nu^*$ is 
orthogonal to $\frak{l}_2^\bc$, we have $\langle \gamma_0,\mu\rangle=0$.  Since $\rho_{\frak{g}}=(1/2)\sum_{\alpha\in \Delta^+} \alpha$, we get $\langle\rho_{\frak{g}},\mu\rangle=(d/2)
(\sum_{\alpha\in \Delta_0^+}\langle \alpha,\nu^*\rangle
+\sum_{\alpha\in \Delta_1}\langle \alpha,\nu^*\rangle
+\sum_{\alpha\in \Delta_2}\langle\alpha,\nu^*\rangle)
=(d/2)(|\Delta_1|+2|\Delta_2|) $, since $\langle\alpha,\nu^*\rangle=i\langle \nu,\nu^*\rangle= i$ whenever $\alpha\in \Delta_i, i=0,1,2$. 
Since $|\Delta_2|=1$, we have $t<-(1/2)\langle\gamma_0+\rho_{\frak{g}},\mu\rangle$ 
if and only if $t<-(d/4)(|\Delta_1|+2)$.

Now $w^0_{\frak{l}}(\nu)=\sum a_j\psi_j$ is the highest weight of $\frak{u}_1$, which is indeed the highest root 
in $\Delta_1$.   Therefore $\langle\rho_{\frak{g}},w^0_{\frak{l}}(\nu)\rangle=\langle\sum\psi_i^*,\sum a_j\psi_j\rangle=(1/2)(\sum a_i||\psi_i||^2)$.  This completes the proof.
\end{proof}

\noindent
{\it Proof of Theorem \ref{main}:}  
Write $\frak{u}_{-1}=E_1\otimes E_2$ where $E_i$ is an irreducible $L_i$-module.  By our hypothesis $L_1\cong \bs^1=\{\exp(i\lambda H_\mu)|\lambda\in \br\}$ and so $E_1$ is $1$-dimensional, given by the character $-\nu^*/||\nu^*||^2=-\mu^*$.  On the other hand, the highest weight of $E_2$ is $-(\nu-\mu^*)$.  Hence $E_2\cong E_{\mu^*-\nu}$.  
Since $E_1$ is one dimensional, we have $S^m(\frak{u}_{-1})=\bc_{-m\mu^*}\otimes S^m(E_{\mu^*-\nu})$.  
On the other hand $\frak{u}_{-2}$ is $1$-dimensional and is isomorphic as an $L_0$-module to $\bc_{-\mu}=\bc_{-2\mu^*}$.  Therefore  
$S^m(\frak{u}_{-2})=\bc_{-2m\mu^*}$.   

The vector bundle $\mathbb{E}$ over $Y=K_1/L_1$ associated to any $L_2$ representation space $E$ is clearly isomorphic to the product bundle $Y\times E\lr Y$. 
Therefore the bundle $\mathbb{E}_\gamma\otimes \mathbb{S}^m(\frak{u}_{-1})$ over $Y=\mathbb{P}^1$ is 
isomorphic to $\mathbb{E}_{(2t/d-m)\mu^*}\otimes 
E_{\gamma_0}\otimes S^m(E_{\mu^*-\nu})$.  It follows that $H^1(Y; \mathbb{E}_\gamma\otimes \mathbb{S}^m(\frak{u}_{-1}))\cong H^1(Y;\mathbb{E}_{(2t/d-m)\mu^*})\otimes E_{\gamma_0}\otimes S^m(E_{\mu^*-\nu})
\cong U_{-2t/d+m-2}\otimes E_{\gamma_0}\otimes S^m(E_{\mu^*-\nu})$.  By Theorem \ref{bds} 
we conclude that 
\[(\pi_{\gamma+\rho_\frak{g}})_{K_0}=\bigoplus_{m\geq 0}
U_{(m-2t/d-2)}\otimes E_{\gamma_0}\otimes S^m(E_{\mu^*-\nu}).\eqno{(2)}\]

We now turn to the description of the holomorphic 
discrete series $\pi_{\gamma+\rho_{\frak{k}}}$ of $K_0^*=K_1^*  K_2$.
Recall from \cite{repka} the following description of the holomorphic discrete series of $K_1^*$ determined by $t\nu^*=(2t/d)\mu^*$, namely, 
$(\pi_{(2t/d)\mu^*+\rho_{\frak{k}_1^\bc}})_{L_1}=\oplus_{r\geq 0} \bc_{(2t/d)\mu*}\otimes S^r(\frak{u}_{-2})=\oplus_{r\geq 0}\bc_{(2t/d-2r)\mu^*}.$  
It follows that the holomorphic discrete series of $K_0^*$ 
determined by $\gamma$ is  
\[(\pi_{\gamma+\rho_\frak{k}})_{L_0}=\bigoplus_{r\geq 0}\bc_{(2t/d-2r)\mu^*}\otimes E_{\gamma_0}.\eqno{(3)}\]

Comparing (2) and (3) we observe that there exists an $L_0$-type common to $(\pi_{\gamma+\rho_\frak{g}})_{K_0}$ and $\pi_{\gamma+\rho_\frak{k}}$ {\it  if and only if} 
the following two conditions hold:\\
(a) $E_{\gamma_0}$ occurs in $E_{\gamma_0}\otimes S^m(E_{\mu^*-\nu})$. \\
(b) Assuming that (a) holds for some $m\geq 0$,  
$(2t/d-2r)\mu^*$ occurs as a weight in $U_{m-2t/d-2}$ for some $r$, that is, $2t/d-2r=(m-2t/d-2)-2i$ for some $0\leq i \leq (m-2t/d -2)$.

    First suppose that $\frak{g_0} = \frak{so}(4,1)$ or $ \frak{sp}(1,l-1), l>1$. In view of Proposition \ref{inadmissible} and Proposition \ref{kfinitepart}, the Borel-de Siebenthal discrete series $\pi_{\gamma+\rho_\frak{g}}$ is $L_0$-admissible and any $L_0$-type in $\pi_{\gamma+\rho_\frak{g}}$ is contained in $(\pi_{\gamma+\rho_\frak{g}})_{K_0}$. Also $S^m(E_{\mu^* - \nu})$ is irreducible with highest weight $m(\mu^* - \nu)$ (see Proposition \ref{relativeinvariants}). Recall that the highest weights of  irreducible sub representations which occur in a tensor product $E_\lambda\otimes E_\kappa$ of two irreducible representations of $\frak{l}^\bc_2$ are all of the form $\theta+\kappa$ where $\theta$ is a weight of $E_\lambda.$  So if (a) holds, then $\gamma_0=m(\mu^*-\nu)+\theta$, for some weight $\theta$ of $E_{\gamma_0}$. This implies $\gamma_0-\theta=m(\mu^*-\nu)$, which holds for atmost finitely many $m$ since the number of weights of $E_{\gamma_0}$ is finite. So by (a), there are atmost finitely many $L_0$-types common to $\pi_{\gamma+\rho_\frak{g}}$ and $\pi_{\gamma+\rho_\frak{k}}$.

Moreover, if $\gamma_0=0$, then the trivial $L_0$-representation $E_{\gamma_0}$ occurs in $E_{\gamma_0}\otimes S^m(E_{\mu^*-\nu})=E_{m(\mu^*-\nu)}$ 
only when $m=0$.  Since $2t/d-2r \leq 2t/d <2t/d+2$ for all $r\geq 0$,  $(2t/d-2r)\mu^*$ cannot be a weight of $U_{-2t/d-2}$ for all $r\geq 0$. So in view of (a) and (b), there are no common $L_0$-types between $\pi_{\gamma+\rho_\frak{g}}$ and $\pi_{\gamma+\rho_\frak{k}}$.

Now suppose that $\frak{g_0}\neq \frak{so}(4,1), \frak{sp}(1,l-1) , l>1$. In view of Proposition \ref{relativeinvariants}, we see that $\mathcal{A}(\frak{u}_1,L)=\bc[f]$, where $f$ is a relative invariant 
(hence is a homogeneous polynomial) of positive degree, say of degree $k$. Then the trivial module is a sub module of the $L_0$-module $S^{jk}(E_{\mu^*-\nu})$ for all $j\geq 0$. So $E_{\gamma_0}$ occurs in $E_{\gamma_0}\otimes S^{jk}(E_{\mu^*-\nu})$ for all $j\geq 0$. That is (a) holds. \\
 Let $r$ be a non negative integer. Then $(2t/d-2r)\mu^*$ is a weight of 
 $U_{jk-2t/d-2}$ for some $j\geq 0$ if and only if $2t/d-2r=(jk-2t/d-2)-2i$ for some $0\leq i\leq (jk-2t/d-2)$ if and only if $jk$ is even and 
 $jk\geq 2(r+1)$.

So in view of (a) and (b), each $L_0$-type in $\pi_{\gamma +\rho_\frak{k}}$ occurs in $\pi_{\gamma +\rho_\frak{g}}$ with infinite multiplicity. This completes the proof. \hfill$\Box$

%%%%%%%%%%%%%%%%%%%%%%%%%%%%%%%%%%%%%%%%%%%%%%%%%%

\section{Decomposition of the symmetric algebra of the isotropy representation}
Let $(K_0,L_0)$ be a  Hermitian symmetric pair of compact type where $K_0$ is simply connected and {\it simple.}  Fix a 
maximal torus $T_0\subset L_0$. 
In this section we recall the description of the 
decomposition of the symmetric powers of the 
isotropy representation of $L_0$ (on the tangent space 
at the identity coset $o\in K_0/L_0=:Y$).  
Let $K_0^*$ denote the dual of $K_0$ with respect to $L_0$. We shall denote the maximal compact subgroup 
of $K_0^*$ corresponding to $Lie(L_0)$ by the same 
symbol $L_0$.
%We shall 
%also consider the (non-compact) 
Thus $(K^*_0,L_0)$ is the non-compact dual of $(K_0,L_0)$ and $X:=K_0^*/L_0$ is the non-compact Hermitian symmetric space dual to $Y$. 

To conform to the notations of \S2, we shall 
denote the set of roots of $\frak{k}=\frak{k}_0^\bc$ with respect to the Cartan subalgebra $\frak{t}=\frak{t}_0^\bc$ by $\Delta_{\frak{k}}$, the set of positive (respectively negative) non-compact roots of a Borel-de Siebenthal positive system of $K_0^*$ 
by $\Delta_2$ (respectively $\Delta_{-2}$) and the holomorphic tangent space 
at $o$ by $\frak{u}_{2}=\sum_{\alpha\in \Delta_{2}}\bc X_{\alpha}$, which affords the 
isotropy representation. The highest weight of the cotangent space $\frak{u}_{-2}$ at $o$ is  
$-\epsilon$, where $\epsilon$ is the simple non-compact root of $K^*_0$. 

Recall, from \cite[Ch. VIII]{helgason}, that two roots $\alpha,\beta\in \Delta_{-2}$ are 
called {\it strongly orthogonal} if $\alpha+\beta, \alpha-\beta$ are not roots of $\frak{k}$.  Since sum of 
two non-compact positive roots is never a root and their 
difference is, if at all, a compact root;  
$\alpha, \beta\in \Delta_{-2}$ are strongly orthogonal if and only if they are orthogonal, that is, $\langle \alpha,\beta\rangle =0$.  Let $\Gamma\subset \Delta_{-2}$ be a maximal set of 
strongly orthogonal roots.  The cardinality of $\Gamma$  
equals the rank of $X$, that is, the maximum  dimension of a Euclidean space that can be imbedded 
in $X$ as a totally geodesic submanifold. 

\subsection{}\label{stronglyorthogonalroots} We now consider a specific maximal set $\Gamma\subset \Delta_{-2}$ of strongly orthogonal roots 
whose elements $\gamma_1,\ldots,\gamma_r$ are 
inductively defined as follows:  this notation should not be confused with the notation~$\gamma_1$, $\gamma_2$ used in~\S\ref{ss:lnot}.  Fix an ordering of the 
simple roots and consider the induced lexicographic ordering on $\Delta_{\frak{k}}$.  Now let $\gamma_1:=-\epsilon$, the highest root in $\Delta_{-2}.$  Having defined $\gamma_1,\ldots, \gamma_i$, let $\gamma_{i+1}$ be the 
highest root in $\Delta_{-2}$ which is orthogonal to 
$\gamma_j, 1\leq j\leq i$.  

Denote by $E_\gamma$ the irreducible 
$L_0$-representation with highest weight $\gamma$. 
We have the following decomposition theorem~\cite{schmid}, which is a far reaching generalization of 
the fact that the symmetric power of the defining representation of the special unitary group is irreducible. See \cite[Theorem 10.25]{knapp2}.

\begin{theorem}\label{schmid}\textup{(see~\cite{schmid})} With the above notations, 
one has the decomposition $S^m(\frak{u}_{-2})$ as an $L_0$-representation 
\[S^m(\frak{u}_{-2})
=\bigoplus E_{a_1\gamma_1+\cdots+a_r\gamma_r}\] 
where the sum is over all partitions $a_1\geq \cdots \geq a_r\geq 0$ of $m$. \hfill $\Box$
\end{theorem}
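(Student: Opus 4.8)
\emph{Proof proposal.} The statement is the classical Hua--Kostant--Schmid decomposition, and the plan is to split it into three parts: (a) every $\frak{l}$-highest weight occurring in $S^m(\frak{u}_{-2})$ has the form $a_1\gamma_1+\cdots+a_r\gamma_r$ for a partition $a_1\geq\cdots\geq a_r\geq 0$ of $m$; (b) each such weight actually occurs; and (c) the occurrence is with multiplicity one. The natural framework is the Jordan-algebraic description of the isotropy representation: $\frak{u}_2$ carries the structure of a complex Jordan algebra on which $L$ acts by structure automorphisms, the strongly orthogonal roots $\gamma_1,\dots,\gamma_r$ correspond to a Jordan frame of primitive idempotents $c_1,\dots,c_r$, and $S^m(\frak{u}_{-2})$ is the degree-$m$ part of the coordinate ring $\bc[\frak{u}_2]$.

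For part (b) I would construct the highest weight vectors explicitly. Relative to the Jordan frame one has the principal minors $\Delta_1,\dots,\Delta_r\in\bc[\frak{u}_2]$ (with $\Delta_k$ of degree $k$ and $\Delta_r$ the generic norm), each of which is a highest weight vector for $\frak{l}$ because it is a relative invariant for the parabolic stabilizing the corresponding flag of idempotents; a weight computation gives that $\Delta_k$ has weight $\gamma_1+\cdots+\gamma_k$ (consistent with the stated highest weight $\gamma_1=-\epsilon$ of $\frak{u}_{-2}=S^1(\frak{u}_{-2})$). Since a product of $\frak{l}$-highest weight vectors is again one, every monomial $\Delta_1^{b_1}\cdots\Delta_r^{b_r}$ ($b_k\geq 0$) is an $\frak{l}$-highest weight vector, of degree $\sum_k kb_k$ and weight $\sum_k b_k(\gamma_1+\cdots+\gamma_k)=\sum_i a_i\gamma_i$, where $a_i=b_i+\cdots+b_r$. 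The substitution $(b_k)\mapsto(a_i)$ is a bijection between $r$-tuples of non-negative integers and partitions $a_1\geq\cdots\geq a_r\geq 0$, and it sends total degree to $\sum_i a_i=m$; this produces a copy of $E_{a_1\gamma_1+\cdots+a_r\gamma_r}$ in $S^m(\frak{u}_{-2})$ for each partition of $m$.

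The remaining two parts form the real work. For (a), I would argue purely with weight combinatorics: any weight of $S^m(\frak{u}_{-2})$ is a sum of $m$ elements of $\Delta_{-2}$, and I would show, using strong orthogonality of the $\gamma_i$ together with $\frak{l}$-dominance, that an $\frak{l}$-dominant such weight necessarily lies in the $\bz_{\geq 0}$-span of $\gamma_1,\dots,\gamma_r$ and satisfies the monotonicity $a_1\geq\cdots\geq a_r$; the key input is that the $\gamma_i$ are, up to the standard restricted-root combinatorics of type $C_r$ or $BC_r$, the corners of the $\frak{l}$-dominant cone meeting the semigroup generated by $\Delta_{-2}$. For (c), multiplicity one, the cleanest route is to prove that $\bc[\frak{u}_2]$ is a multiplicity-free $L$-module and then combine this with (a) and (b): multiplicity-freeness holds because $\frak{u}_2$ is the isotropy representation of an irreducible Hermitian symmetric space, hence a spherical $L$-space (a Borel subgroup of $L$ has a dense orbit on $\frak{u}_2$), which forces each isotypic component of the coordinate ring to be irreducible.

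I expect part (a)---showing there are \emph{no other} highest weights---to be the main obstacle, since (b) only gives a lower bound on the constituents and (c) only rules out repetitions; pinning down the exact dominant-weight cone requires the full strongly-orthogonal-root analysis. An alternative to combining (a) and (c) is a single global argument: realize $\bigoplus_m S^m(\frak{u}_{-2})$ as the $L_0$-finite part of a scalar holomorphic discrete series of $K_0^*$ and read off its $L_0$-types directly, or match graded characters against the known product formula for the norm; either way the combinatorial identification of the occurring highest weights with partitions remains the crux.
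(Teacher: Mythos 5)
The paper offers no proof of this statement: it is imported from Schmid \cite{schmid} (see also \cite[Theorem~10.25]{knapp2}), and the terminal $\Box$ signals that nothing is argued. So the only possible comparison is with the standard proof of the Hua--Kostant--Schmid decomposition, and your outline is indeed that standard proof. Your parts (b) and (c) are essentially complete modulo well-known inputs: the principal minors $\Delta_k$ attached to a Jordan frame are $\frak{l}$-highest weight vectors of weight $\gamma_1+\cdots+\gamma_k$, a product of highest weight vectors is again one, and the substitution $a_i=b_i+\cdots+b_r$ correctly matches the monomials $\Delta_1^{b_1}\cdots\Delta_r^{b_r}$ of degree $m$ with partitions of $m$ into at most $r$ parts; and sphericity of $\frak{u}_2$ under $L$ (a Borel subgroup of $L$ has a dense orbit, e.g.\ through the Jordan identity $c_1+\cdots+c_r$) does force each graded piece of $\bc[\frak{u}_2]$ to be multiplicity-free.

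The genuine gap is part (a), which you correctly identify as the crux but do not actually carry out: saying that the $\gamma_i$ are ``the corners of the $\frak{l}$-dominant cone meeting the semigroup generated by $\Delta_{-2}$'' is a restatement of what must be proved, not an argument, and (b) together with (c) does not imply (a) --- multiplicity-freeness rules out repetitions but not extraneous constituents, and there is no easy dimension count to exclude them. The missing idea is the polar (spectral) decomposition $\frak{u}_2=\mathrm{Ad}(L_0)\cdot\frak{a}$ with $\frak{a}=\sum_i\br X_{-\gamma_i}$: a $B_L$-semi-invariant polynomial on $\frak{u}_2$ is determined by its restriction to $\frak{a}$, which forces its $\frak{t}$-weight to vanish on the orthocomplement of $\mathrm{span}(\gamma_1,\dots,\gamma_r)$ and to lie in $\sum_i\bz_{\geq 0}\gamma_i$; the little Weyl group, which permutes $\{\gamma_1,\dots,\gamma_r\}$, combined with $\frak{l}$-dominance then yields $a_1\geq\cdots\geq a_r\geq 0$. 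This is exactly the content of the proofs in \cite{schmid} and \cite[Theorem~10.25]{knapp2}. Finally, your proposed ``single global argument'' via a scalar holomorphic discrete series is circular here: Harish-Chandra's description of its $L_0$-finite part as $\bc_\lambda\otimes S^*(\frak{u}_{-2})$ is an input (the paper itself uses it in \S4), so ``reading off its $L_0$-types'' is the same problem, not an independent route.
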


Let $\epsilon^*$ be the fundamental weight corresponding to $\epsilon$ and $\frak{z}_{\frak{l}}^*$ be the dual space of  $\frak{z}_{\frak{l}}$. Note that  $\frak{z}_{\frak{l}}^*=\bc \epsilon^*$. Hence $E_\gamma$ is one dimensional precisely when  $\gamma=k\epsilon^*$ for some integer $k$. Now we see from the above theorem that 
$S^m(\frak{u}_{-2})$ admits a $1$-dimensional $L_0$-subrepresentation precisely when 
there exists non negative integers $a_1\geq \cdots \geq 
a_r\geq 0$ such that $\sum a_i\gamma_i=c_0\epsilon^*$ for 
some constant $c_0$. The first part of the following proposition gives a criterion for this to happen.

\begin{proposition}\label{sumgamma}
(i)  Let $\Gamma=\{\gamma_1, \ldots, \gamma_r\}$ be the maximal set of strongly orthogonal roots obtained as above. 
 Let $w^0_{\frak{k}}$ denote the longest element of the Weyl group 
 of $(\frak{k},\frak{t})$. Suppose that $w^0_{\frak{k}}(-\epsilon)=\epsilon.$  Then $\sum_{1\leq i\leq r} \gamma_i=-2\epsilon^*$.  Conversely, if $\sum_{1\leq i\leq r} a_i\gamma_i$ is a non-zero multiple of $\epsilon^*$ where $a_i\in \bz$, then $a_i=a_j~\forall 1\leq i,j\leq r,$ and,  
$w^0_{\frak{k}}(\epsilon)=-\epsilon$.
 
 (ii) Moreover, for any $1\leq j\leq r$, if the coefficient of a compact simple root $\alpha$ of $\frak{k}$ in the expression of $\sum_{1\leq i\leq j} \gamma_i$ is non-zero, then $\sum_{1\leq i\leq j} \gamma_i$ is orthogonal to $\alpha$ (without any assumption on $w^0_{\frak{k}}$).
\end{proposition}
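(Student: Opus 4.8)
The plan is to establish (ii) first, since it is the combinatorial heart, and then to read off (i) from it together with two short inner-product computations.

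For (ii) I would induct on $j$, writing $\sigma_j=\gamma_1+\cdots+\gamma_j$. Applying Theorem~\ref{schmid} to the partition $(1^j)$ of $j$ shows that $E_{\sigma_j}$ is an irreducible $L_0$-summand of $S^j(\frak u_{-2})$, so $\sigma_j$ is $\frak l$-dominant and in particular $\langle\sigma_j,\alpha\rangle\ge 0$ for every compact simple root $\alpha$; it therefore suffices to exclude $\langle\sigma_j,\alpha\rangle>0$ whenever $\alpha$ occurs in $\sigma_j$. The driving mechanism is the greedy maximality of $\gamma_j$: if a compact simple root $\alpha$ is orthogonal to $\gamma_1,\dots,\gamma_{j-1}$ and $\langle\gamma_j,\alpha\rangle<0$, then $\gamma_j+\alpha$ is again a root in $\Delta_{-2}$, is still orthogonal to $\gamma_1,\dots,\gamma_{j-1}$, and is strictly higher than $\gamma_j$, contradicting the choice of $\gamma_j$. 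Since each $\gamma_i$ with $i<j$ is orthogonal to $\gamma_j$, one has $\langle\sigma_{j-1},\alpha\rangle=0$ for every $\alpha$ orthogonal to the earlier cascade, and I would run the induction by tracking, stage by stage, which compact simple roots are orthogonal to the part of the cascade built so far. The genuine difficulty is the bookkeeping that matches the set of simple roots occurring in $\sigma_j$ with the set of simple roots orthogonal to $\sigma_j$ across the inductive step; this is where I expect the main work to lie.

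Granting (ii), part (i) rests on two identities. Because $\gamma_k\in\Delta_{-2}$ has $\epsilon$-coefficient $-1$ and $\langle\epsilon^*,\psi\rangle=\delta_{\psi,\epsilon}\langle\epsilon,\epsilon\rangle/2$ on simple roots $\psi$, one gets $\langle\epsilon^*,\gamma_k\rangle=-\langle\epsilon,\epsilon\rangle/2$ for all $k$; moreover the greedy roots are mutually orthogonal of common length $\langle\gamma_k,\gamma_k\rangle=\langle\epsilon,\epsilon\rangle$ (a standard fact about Harish-Chandra strongly orthogonal roots). For the converse, pairing $\sum_i a_i\gamma_i=b\epsilon^*$ with $\gamma_k$ collapses the left side to $a_k\langle\gamma_k,\gamma_k\rangle=a_k\langle\epsilon,\epsilon\rangle$ and the right side to $-b\langle\epsilon,\epsilon\rangle/2$, whence $a_k=-b/2$ independently of $k$; thus all $a_i$ coincide and $\sigma_r=\sum_i\gamma_i$ is a nonzero multiple $c_0\epsilon^*$. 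Pairing $\sigma_r=c_0\epsilon^*$ with $\epsilon$ and using $\langle\sigma_r,\epsilon\rangle=\langle\gamma_1,\epsilon\rangle=-\langle\epsilon,\epsilon\rangle$ (the $\gamma_i$ with $i\ge 2$ being orthogonal to $\gamma_1=-\epsilon$) gives $c_0\langle\epsilon^*,\epsilon\rangle=-\langle\epsilon,\epsilon\rangle$, and since $\langle\epsilon^*,\epsilon\rangle=\langle\epsilon,\epsilon\rangle/2$ this forces $c_0=-2$, i.e. $\sigma_r=-2\epsilon^*$.

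What remains in (i) is the equivalence of $\sigma_r\in\bc\epsilon^*$ with the tube condition $w^0_{\frak k}(\epsilon)=-\epsilon$, which I regard as the principal obstacle. I would bridge it through the Cayley element $w=s_{\gamma_1}\cdots s_{\gamma_r}$: the identities above give the clean formula $w(\epsilon^*)=\epsilon^*+\sigma_r$, so, $w$ being an isometry, $\sigma_r\in\bc\epsilon^*$ forces $w(\epsilon^*)=\pm\epsilon^*$, the $+$ sign being excluded since it yields $\sigma_r=0$; hence $\sigma_r\in\bc\epsilon^*\Leftrightarrow w(\epsilon^*)=-\epsilon^*\Leftrightarrow\sigma_r=-2\epsilon^*$. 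The forward direction is then immediate once tube type is matched with $w(\epsilon^*)=-\epsilon^*$, for then $\sigma_r=w(\epsilon^*)-\epsilon^*=-2\epsilon^*$. Independently, combining (ii) with the $\frak l$-dominance of $\sigma_r$ and the connectedness of the Dynkin diagram of $\frak k$ shows that $\sigma_r\in\bc\epsilon^*$ holds precisely when every compact simple root occurs in $\sigma_r$. The last and hardest step is to identify either of these conditions with the requirement that the opposition involution $-w^0_{\frak k}$ fix $\epsilon$ — equivalently that it preserve $\Delta_0$ — which is the reducedness of the restricted root system classically characterizing the tube type of $K_0^*/L_0$.
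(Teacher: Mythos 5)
Your strategy is genuinely different from the paper's: the paper proves Proposition \ref{sumgamma} by explicit case-by-case verification over the classification of irreducible Hermitian symmetric pairs (cases A\,III, D\,III, BD\,I, C\,I, E\,III, E\,VII), writing out $\Gamma$ in coordinates each time, whereas you aim for a uniform argument. Several pieces of your plan are sound and essentially complete: the identity $\langle\epsilon^*,\gamma_k\rangle=-||\epsilon||^2/2$ (valid because $\gamma_k+\epsilon$ is a sum of compact simple roots), the deduction that all $a_i$ coincide and that $\sum_i\gamma_i=-2\epsilon^*$ whenever $\sum_i a_i\gamma_i$ is a nonzero multiple of $\epsilon^*$, and the passage from $w(\epsilon^*)=-\epsilon^*$ for $w=s_{\gamma_1}\cdots s_{\gamma_r}$ to $w^0_{\frak{k}}(\epsilon^*)=-\epsilon^*$ and hence $w^0_{\frak{k}}(\epsilon)=-\epsilon$, since $-\epsilon^*$ is the unique antidominant weight in the Weyl orbit of $\epsilon^*$ and $-w^0_{\frak{k}}$ acts as a diagram automorphism. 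This gives a clean, classification-free proof of the \emph{converse} half of (i), modulo the input $||\gamma_i||=||\epsilon||$, which the paper also uses (Remark \ref{waction}(iii)) but which deserves a check in a self-contained uniform treatment: it is not a consequence of all non-compact roots having equal length (in type $C$ they do not).

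That said, two genuine gaps remain, both of which you flag yourself. First, for (ii) you correctly reduce, via Theorem \ref{schmid} and $\frak{l}$-dominance of $\sigma_j=\gamma_1+\cdots+\gamma_j$, to excluding $\langle\sigma_j,\alpha\rangle>0$ for compact simple $\alpha$ occurring in $\sigma_j$; but your greedy-maximality mechanism only controls $\langle\gamma_j,\alpha\rangle$ for those $\alpha$ orthogonal to $\gamma_1,\dots,\gamma_{j-1}$, and the ``bookkeeping'' matching the support of $\sigma_j$ with the set of simple roots orthogonal to $\sigma_j$ is exactly the content of (ii) and is never carried out. Second, the forward direction of (i) is not proved: from $w^0_{\frak{k}}(\epsilon)=-\epsilon$ you need $w(\epsilon^*)=-\epsilon^*$ for the \emph{specific} element $w=s_{\gamma_1}\cdots s_{\gamma_r}$, equivalently $\sum_i\gamma_i\in\bc\epsilon^*$, and you only assert that this should follow from the classical characterization of tube type via reducedness of the restricted root system, without supplying that argument. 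Until these two steps are filled in, the proposal is an outline rather than a proof; the paper sidesteps both difficulties by direct verification in each case of the classification.
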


\begin{proof} Our proof involves a straightforward verification using 
the classification of irreducible Hermitian symmetric pairs of non-compact type. See \cite[\S 6, Ch. X]{helgason}. We follow the labelling conventions of Bourbaki \cite[Planches I-VII]{bourbaki} and make use of the description of the root system, especially in cases E-III and E-VII. Note that $-w^0_{\frak{k}}$ induces an automorphism of the Dynkin diagram of $\frak{k}$. In particular, $-w^0_{\frak{k}}(\epsilon)=\epsilon$ when the Dynkin diagram of $K_0$ admits no symmetries.

\noindent
{\it Case A III}:  $(\frak{k}_0^*,\frak{l}_0)=(\frak{su}(p,q),\frak{s}(\frak{u}(p)\times \frak{u}(q))), p\leq q$. The simple roots are $\psi_i=\e_i -\e_{i+1}$, $1\leq i\leq p+q-1$. If $p+q>2$, then $-w^0_{\frak{k}}$ induces the order $2$ automorphism of the Dynkin 
diagram of $\frak{k}$, which is of type $A_{p+q-1}$.  Thus $-w^0_{\frak{k}}(\psi_j)=\psi_{p+q-j}$ in any case.  The simple non-compact root is $\epsilon=\psi_p=\e_{p}-\e_{p+1}$, all other simple roots are compact roots.  Therefore $-w^0_{\frak{k}}(\psi_p)=\psi_p$ if and only if $p=q$.  
On the other hand, the set of 
negative non-compact roots $\Delta_{-2}= \{\e_j-\e_i\mid 
1\leq i\leq p<j\leq p+q\}$ and $\Gamma=\{\gamma_j:=\e_{p+j}-\e_{p-j+1}\mid 1\leq j\leq p\}$.  If $p=q$, then $\sum_{1\leq j\leq p} \gamma_j = \sum_{1\leq j\leq q}\e_{p+j}-\sum_{1\leq j\leq p}\e_{p-j+1}$.  Using the fact that $\sum_{1\leq i\leq p+q}\e_i=0$, we see that $\sum_{1\leq j\leq p}\gamma_j=-2(\sum_{1\leq j\leq p}\e_j)=-2\epsilon^*$ if $p=q$.  

For the converse part, assume that $\sum_j a_j\gamma_j=m\epsilon^*, m\neq 0$.  It is evident when $p<q$ that $\sum a_j\gamma_j$ is not a multiple of $\epsilon^*$ (since $\e_{p+q}$ does not occur in the sum).    Since the $\gamma_j,1\leq j\leq p$, 
are linearly independent, the uniqueness of the expression of $\epsilon^*$ as a linear combination of the $\gamma_j$ implies that $a_j=a_1$ for all $j$.

 To prove $(ii)$, note that $\gamma_1=-\epsilon$ and $\gamma_j=-(\epsilon+\psi_{p-j+1} +\cdots+\psi_{p-1}+\psi_{p+1}+\cdots+\psi_{p+j-1})$, $2\leq j\leq p$. So the only compact simple roots whose coefficients are non-zero in the expression of $\sum_{1\leq i\leq j} \gamma_i(j>1)$ are $\psi_i$ ($p-j+1\leq i\leq p+j-1$, $i\ne p$). Note that  $\sum_{1\leq i\leq j} \gamma_i=-(\e_{p-j+1}+\cdots+\e_p-\e_{p+1}-\cdots-\e_{p+j})$. Hence $\langle\sum_{1\leq i\leq j} \gamma_i, \psi_i\rangle=0$ for all $p-j+1\leq i\leq p+j-1$, $i\ne p$.

\noindent
{\it Case D III}: $(\frak{so}^*(2p),\frak{u}(p)), p\geq 4$. The simple roots are $\psi_i=\e_i -\e_{i+1}$, $1\leq i\leq p-1$ and $\psi_p=\e_{p-1}+\e_{p}$. 
In this case the only non-compact simple root  
$\epsilon=\psi_p=\e_{p-1}+\e_{p}$; $\epsilon^*=(1/2)(\sum_{1\leq j\leq p}\e_j)$. The set of non-compact positive roots is 
$\Delta_2=\{\e_i+\e_j\mid 1\leq i<j\leq p\}$ and $\Gamma=\{\gamma_j=-(\e_{p-2j+1}+\e_{p-2j+2})\mid 1\leq j\leq \lfloor p/2\rfloor\}$. 
So $\sum_{1\leq j\leq \lfloor p/2\rfloor}\gamma_j=-2\epsilon^*$ if $p$ is even. 
On the other hand $w^0_{\frak{k}}$ maps $\epsilon$ to $-\epsilon$ precisely when $p$ is even. 

When $p$ is odd, it is readily seen that $\sum_j a_j\gamma_j$ is not a non-zero multiple of $\epsilon^*$ since $\e_1$ does not occur in the sum.

 To prove $(ii)$, note that $\gamma_1=-\epsilon$ and $\gamma_j=-(\epsilon+\psi_{p-2j+1} +2\psi_{p-2j+2}+\cdots+2\psi_{p-2}+\psi_{p-1})$, $2\leq j\leq \lfloor p/2\rfloor$. So the only compact simple roots whose coefficients are non-zero in the expression of $\sum_{1\leq i\leq j} \gamma_i(j>1)$ are $\psi_i$ ($p-2j+1\leq i\leq p-1$). Note that  $\sum_{1\leq i\leq j} \gamma_i=-(\e_{p-2j+1}+\cdots+\e_p)$. Hence $\langle\sum_{1\leq i\leq j} \gamma_i, \psi_i\rangle=0$ for all $p-2j+1\leq i\leq p-1$.

\noindent
{\it Case BD I (rank$=2$):} 
$(\frak{so}(2,p),\frak{so}(2)\times \frak{so}(p)), p>2$. 
We have $\epsilon=\psi_1=\e_1-\e_2, \epsilon^*=\e_1$ and $w^0_{\frak{k}}(\epsilon)=-\epsilon$.  
Now $\Delta_2=\{\e_1\pm \e_j\mid 2\leq j\leq p\}\cup\{\e_1\}$ if $p$ is odd and is equal to $\{\e_1\pm \e_j\mid 
2\leq j\leq p\}$ if $p$ is even.  For any $p$,   
$\Gamma=\{\gamma_1=-(\e_1-\e_2), \gamma_2=-(\e_1+\e_2)\}$. Clearly $a_1\gamma_1+a_2\gamma_2=
m\epsilon^*$ if and only if $a_1=a_2$. 
Since in this case rank is $2$ and $\gamma_1+\gamma_2= -2\epsilon^*$, $(ii)$ is obvious.

\noindent 
{\it Case C I:}
$(\frak{sp}(p,\mathbb{R}),\frak{u}(p)), p\geq 3$.  The simple roots are $\psi_i=\e_i -\e_{i+1}$, $1\leq i\leq p-1$ and $\psi_p=2\e_{p}$. 
We have $\epsilon=2\e_p, \epsilon^*=\sum_{1\leq j\leq p}\e_j$, 
and $w^0_{\frak{k}}(\epsilon)=-\epsilon$. Also $\Delta_2=\{\e_i+\e_j\mid 1\leq i\leq j\leq p\}$. Therefore $\Gamma=\{\gamma_j:=-2\e_{p-j+1}\mid 1\leq j\leq p\}$. Evidently 
$\sum_{1\leq j\leq p}\gamma_j=-2\epsilon^*$. 

The converse part is obvious in this case.

 To prove $(ii)$, note that $\gamma_1=-\epsilon$ and $\gamma_j=-(\epsilon+2\psi_{p-j+1} +\cdots+2\psi_{p-1})$, $2\leq j\leq p$. So the only compact simple roots whose coefficients are non-zero in the expression of $\sum_{1\leq i\leq j} \gamma_i(j>1)$ are $\psi_i$ ($p-j+1\leq i\leq p-1$). Note that  $\sum_{1\leq i\leq j} \gamma_i=-2(\e_{p-j+1}+\cdots+\e_p)$. Hence $\langle\sum_{1\leq i\leq j} \gamma_i, \psi_i\rangle=0$ for all $p-j+1\leq i\leq p-1$. 

\noindent
{\it Case E III:} 
$(\frak{e}_{6, -14}, \frak{so}(10)\oplus \frak{so}(2))$. The simple roots are $\psi_1=(1/2)(\e_8-\e_6-\e_7+\e_1-\e_2-\e_3-\e_4-\e_5), \psi_2=\e_1+\e_2, \psi_3=\e_2-\e_1, \psi_4=\e_3-\e_2, \psi_5=\e_4-\e_3,
\psi_6=\e_5-\e_4$. In this case 
the rank is $2$, $\epsilon=\psi_1=(1/2)(\e_8-\e_6-\e_7+\e_1-\e_2-\e_3-\e_4-\e_5), $ and 
$\epsilon^*= (2/3)(\e_8-\e_7-\e_6)$.  We have $-w^0_{\frak{k}}(\epsilon)=\psi_6\neq \epsilon$.  
Now  $\Delta_2=\{(1/2)(\e_8-\e_7-\e_6+\sum_{1\leq i\leq 5}(-1)^{s(i)}\e_i)\mid 
s(i)=0,1, \sum_i s(i)\equiv 0\mod 2 \}.$ 
There are five roots in $\Delta_{-2}$ which are   
orthogonal to $\gamma_1=-\epsilon.$  Among these 
the highest is $\gamma_2=-(1/2)(\e_8-\e_6-\e_7-\e_1+\e_2+\e_3
+\e_4-\e_5)$.  Thus $\Gamma=\{\gamma_1,\gamma_2\}$. 
Now $a_1\gamma_1+a_2\gamma_2$ is not a multiple of $\epsilon^*$  for any $a_1,a_2\geq 0$ unless $a_1=a_2=0$. 

 Note that $\gamma_2=-(\epsilon+\psi_2+2\psi_3+2\psi_4+\psi_5)$, $\gamma_1 +\gamma_2=-(\e_8-\e_7-\e_6-\e_5)$. Hence $\langle\gamma_1+\gamma_2, \psi_i\rangle=0$ for all $2\leq i\leq 5$.

\noindent
{\it Case E VII:} 
$(\frak{e}_{7, -25},\frak{e}_6\oplus \frak{so}(2))$.  The simple roots are $\psi_1=(1/2)(\e_8-\e_6-\e_7+\e_1-\e_2-\e_3-\e_4-\e_5), \psi_2=\e_1+\e_2, \psi_3=\e_2-\e_1, \psi_4=\e_3-\e_2, \psi_5=\e_4-\e_3,
\psi_6=\e_5-\e_4,\psi_7=\e_6-\e_5$. In this case rank$=3$, $\epsilon=\psi_7=\e_6-\e_5, \epsilon^*=\e_6+(1/2)(\e_8-\e_7), w^0_{\frak{k}}(-\epsilon)=\epsilon $. 
$\Delta_2=\{\e_6-\e_j,\e_6+\e_j, 1\leq j\leq 5\}\cup\{\e_8-\e_7\}
\cup\{(1/2)(\e_8-\e_7+\e_6+\sum_{1\leq j\leq 5}(-1)^{s(j)}\e_j)\mid 
s(j)=0,1, \sum_j s(j)\equiv 1\mod 2\}$.
Now $\Gamma=\{\gamma_1=\e_5-\e_6, \gamma_2=-\e_5-\e_6,\gamma_3=\e_7-\e_8\}$ and we have $\gamma_1+\gamma_2+\gamma_3=-2\epsilon^*$. 
The converse part is easily established.

We have  $\gamma_2=-(\epsilon+\psi_2+\psi_3+2\psi_4+2\psi_5+2\psi_6)$, $\gamma_1 +\gamma_2=-2\e_6$.  Hence $\langle\gamma_1+\gamma_2, \psi_i\rangle=0$ for all $2\leq i\leq 6$. Also $\gamma_1+\gamma_2+\gamma_3= -2\epsilon^*$. So $(ii)$ is proved.
\end{proof}

As a corollory we obtain the following.

\begin{proposition}\label{holadmissibility}
Suppose that $K^*_0/L_0$ is an irreducible Hermitian symmetric space of non-compact type and let $\pi_{\gamma+\rho_{\frak{k}}}$ be 
a holomorphic discrete series of $K^*_0$.  
If $w^0_{\frak{k}}(\epsilon)=-\epsilon$, then $(\pi_{\gamma+\rho_{\frak{k}}})_{L_0}$ is not $L_0'$-admissible.  
Conversely, if a holomorphic discrete series 
$\pi_{\gamma+\rho_{\frak{k}}}$ of $K_0^*$ is not $L_0'$-admissible, then $w_{\frak{k}}^0(\epsilon)=-\epsilon$. 
\end{proposition}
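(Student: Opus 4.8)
The plan is to analyze the $L_0$-finite part of the holomorphic discrete series $\pi_{\gamma+\rho_{\frak{k}}}$, which by the discussion in \S4 is $(\pi_{\gamma+\rho_{\frak{k}}})_{L_0}=E_\gamma\otimes S^*(\frak{u}_{-2})=\bigoplus_{m\geq 0}E_\gamma\otimes S^m(\frak{u}_{-2})$, and to feed the decomposition of Theorem~\ref{schmid} into it. Recall that $L_0'$-admissibility means finiteness of the multiplicity of every $L_0'$-type, and since $L_0'$ acts trivially on the centre $\bc H_{\nu^*}$, the obstruction to admissibility is precisely the recurrence of a single $[\frak{l},\frak{l}]$-type across infinitely many distinct central characters. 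Thus the heart of the matter is to decide whether some irreducible $L_0'$-representation appears in $E_\gamma\otimes S^m(\frak{u}_{-2})$ for infinitely many $m$.

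For the forward direction, I would assume $w^0_{\frak{k}}(\epsilon)=-\epsilon$ and produce an explicit $L_0'$-type of infinite multiplicity. By Proposition~\ref{sumgamma}(i), the hypothesis $w^0_{\frak{k}}(-\epsilon)=\epsilon$ gives $\sum_{1\leq i\leq r}\gamma_i=-2\epsilon^*$, so that the partition $a_1=\cdots=a_r=j$ contributes the summand $E_{j(\gamma_1+\cdots+\gamma_r)}=E_{-2j\epsilon^*}$ to $S^{jr}(\frak{u}_{-2})$ for every $j\geq 0$. Since $-2j\epsilon^*$ is a multiple of $\epsilon^*$, which spans $\frak{z}_{\frak{l}}^*$, each of these is a one-dimensional $L_0$-module on which $L_0'$ acts trivially. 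Consequently $E_\gamma\otimes E_{-2j\epsilon^*}\cong E_{\gamma-2j\epsilon^*}$ is irreducible and restricts to the single irreducible $L_0'$-module $E_{\gamma_0}$ determined by $\gamma$ (the $\frak{z}_{\frak{l}}$-part being irrelevant to $L_0'$). Hence the $L_0'$-type $E_{\gamma_0}$ occurs in $(\pi_{\gamma+\rho_{\frak{k}}})_{L_0}$ for all $j\geq 0$, i.e.\ with infinite multiplicity, so the restriction is not $L_0'$-admissible.

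For the converse I would argue contrapositively: assuming $w^0_{\frak{k}}(\epsilon)\neq-\epsilon$, I show every $L_0'$-type occurs with finite multiplicity. The central character of $E_\gamma\otimes E_{a_1\gamma_1+\cdots+a_r\gamma_r}$ is $\langle\gamma,\epsilon\rangle$-shifted by the $\frak{z}_{\frak{l}}$-component of $a_1\gamma_1+\cdots+a_r\gamma_r$, so two summands with the same $L_0'$-type but distinct central characters are genuinely distinct $L_0$-types; the danger is only summands sharing a central character. Now the converse part of Proposition~\ref{sumgamma}(i) states that when $\sum a_i\gamma_i$ is a non-zero multiple of $\epsilon^*$ one must have $w^0_{\frak{k}}(\epsilon)=-\epsilon$. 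Under our standing assumption $w^0_{\frak{k}}(\epsilon)\neq-\epsilon$, therefore, no non-trivial nonnegative combination $\sum a_i\gamma_i$ lies in $\frak{z}_{\frak{l}}^*=\bc\epsilon^*$, so distinct partitions yield weights with distinct $\frak{z}_{\frak{l}}$-projections. Since a fixed $L_0'$-type together with a fixed central character pins down the full $L_0$-highest weight, and each such weight arises from only finitely many partitions of bounded size, every $L_0'$-type recurs finitely often.

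The main obstacle is the bookkeeping in the converse: I must argue that for a fixed irreducible $L_0'$-module $E_{\gamma_0}$, the set of $m$ for which it appears in $E_\gamma\otimes S^m(\frak{u}_{-2})$ is finite. The clean way is to observe that any $L_0$-type in $E_\gamma\otimes S^m(\frak{u}_{-2})$ has highest weight $\gamma+\sum a_i\gamma_i+(\text{lower})$ with $\sum a_i=m$, so its central character is $\langle\gamma,\epsilon\rangle/\|\epsilon\|^2$ shifted by the (strictly monotone in $m$, by the failure of $\epsilon^*$-proportionality) $\frak{z}_{\frak{l}}$-component; fixing the $L_0'$-type bounds the $[\frak{l},\frak{l}]$-part, and the requirement that the total weight be $L_0$-dominant then forces finitely many $m$. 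I expect verifying this strict monotonicity/separation of central characters—i.e.\ that no cancellation among the $\gamma_i$ produces two partitions of different sizes with equal $\epsilon^*$-coefficient—to be the step requiring care, but it follows directly from Proposition~\ref{sumgamma}(i) since any such coincidence would force a non-trivial combination to be a multiple of $\epsilon^*$, contradicting $w^0_{\frak{k}}(\epsilon)\neq-\epsilon$.
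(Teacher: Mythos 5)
Your argument is correct and follows essentially the same route as the paper: the forward direction extracts the one-dimensional summands $E_{-2j\epsilon^*}\subset S^{jr}(\frak{u}_{-2})$ from Theorem~\ref{schmid} together with Proposition~\ref{sumgamma}(i), and the converse rests on the converse part of that same proposition, plus the finiteness of the set of weights of $E_\gamma$, to rule out any $L_0'$-type recurring infinitely often. One terminological slip: when $\sum(a_i-b_i)\gamma_i$ is not a multiple of $\epsilon^*$, the two weights have distinct components orthogonal to $\epsilon^*$, i.e.\ distinct restrictions to $\frak{t}\cap[\frak{l},\frak{l}]$ and hence distinct $L_0'$-types --- not distinct $\frak{z}_{\frak{l}}$-projections as you write --- but this corrected statement is exactly what your bookkeeping needs and uses.
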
 
\begin{proof}
One has the following description of $(\pi_{\gamma+\rho_{\frak{k}}})_{L_0}$ due to Harish-Chandra: 
$(\pi_{\gamma+\rho_{\frak{k}}})_{L_0}=\oplus_{m\geq 0} E_\gamma \otimes S^m(\frak{u}_{-2})$.  Suppose that 
$w^0_{\frak{k}}(\epsilon)=-\epsilon$.  Then by 
Proposition \ref{sumgamma} and 
Theorem~\ref{schmid} we see that $E_\gamma
\otimes E_{-a\epsilon^*}$ occurs in $(\pi_{\gamma+\rho_{\frak{k}}})_{L_0}$ for infinitely many values of $a$. Since 
$E_{-\epsilon^*}$ is one-dimensional, it is trivial as 
an $L_0'$-representation.  Hence $(\pi_{\gamma+\rho_{\frak{k}}})_{L_0}$ is not $L_0'$-admissible. 

 Conversely, since $\pi_{\gamma+\rho_{\frak{k}}}$ is not $L_0'$-admissible, in view of Proposition \ref{kfinitepart} we have, $(\pi_{\gamma+\rho_{\frak{k}}})_{L_0}$ is not $L_0'$-admissible. Suppose that $w^0_{\frak{k}}(-\epsilon)\neq \epsilon$.
Any $L_0'$-type in  $(\pi_{\gamma+\rho_{\frak{k}}})_{L_0}$ is of the form $E_{\sum a_j\gamma_j +\kappa}$ (considered as $L_0'$-module) for some weight $\kappa$ of $E_\gamma$. Since the set of weights of $E_{\gamma}$ is finite,  $(\pi_{\gamma+\rho_{\frak{k}}})_{L_0}$ is not $L_0'$ admissible implies  $S^*(\frak{u}_{-2})$ is not $L_0'$ admissible.  If $E_{\sum a_j\gamma_j}\cong E_{\sum b_j\gamma_j}$ as $L_0'$-modules, then $\sum (a_j-b_j)\gamma_j$ is a multiple 
of $\epsilon^*$.  Proposition \ref{sumgamma} implies 
that $a_j=b_j, 1\leq j\leq r.$ 
\end{proof}  

The above proposition could also be proved by using Kobayashi's criterion~\cite[Theorem~6.3.3]{kobayashipm} and computation
of  the ``asymptotic $L_0$-support'' of~$\pi_{\gamma+\rho_{\frak{k}}}$ using Theorem~\ref{schmid}.

We conclude this section with the following remarks.

\begin{remark}{\em 
Let $G_0, K_0$ be as in \S 2.  
Recall from \S4 that one has an associated 
holomorphic discrete series $\pi_{\gamma+\rho_{\frak{k}}}$ of $K_0^*=K_1^*.K_2$.  Writing $\gamma=\lambda+\kappa$ where $\lambda,\kappa$ are dominant weights of 
$\frak{l}_1^\bc,\frak{l}_2^\bc$ respectively, 
we have $(\pi_{\gamma+\rho_{\frak{k}}})_{L_0}
=E_{\kappa}\otimes (\pi_{\lambda+\rho_{\frak{k}_1^\bc}})_{L_1}$.  Therefore $\pi_{\gamma+\rho_{\frak{k}}}$ is $L_0'$-admissible if and only if $\pi_{\lambda+\rho_{\frak{k}_1^\bc}}$ is $L_1'$-admissible. 
%=E_{\kappa}\otimes (E_{\lambda}\otimes S^*(\frak{u}_{-2}))
%=E_\gamma \otimes S^*(\frak{u}_{-2})=\oplus_{m\geq 0} E_\gamma\otimes S^m(\frak{u}_{-2}).$ 
Since $K_1$ is {\it simple}, and since $w_\frak{k}^0(\epsilon)=w_{\frak{k}_1^\bc}^0(\epsilon)$, 
it follows from the above proposition that {\it $\pi_{\gamma+\rho_{\frak{k}}}$ is $L_0'$ admissible if and only 
if $w_{\frak{k}}^0(\epsilon)\neq -\epsilon$.}
}
\end{remark}

\begin{remark}\label{waction}
{\em
Let $\Gamma$ be the set of strongly orthogonal 
roots as in Proposition \ref{sumgamma} and suppose that 
$w_\frak{k}^0(\epsilon)=-\epsilon$. Then:\\
(i) It follows 
from the explicit description of $\Gamma$ in each 
case that $w^0_{\frak{l}}(\gamma_j)=\gamma_{r+1-j}=-w_Y(\gamma_j), 1\leq j\leq r$.  In particular $-\mu\in \Gamma.$ \\
(ii)  For any $w$ in the Weyl group of $(\frak{l},\frak{t})$,  $\sum_{\gamma\in \Gamma}w(\gamma)=w(\sum_{\gamma\in \Gamma} \gamma)=-2w(\epsilon^*)=-2\epsilon^*$.\\ 
(iii)  Note that $||\gamma_i||=||\epsilon||, 1\leq i\leq r$.  
This property holds even without the assumption that 
$w^0_\frak{k}(\epsilon)=-\epsilon.$ 
}
\end{remark}

\section{Proof of Theorem \ref{nonquaternionic}}

As in \S2, let $(G_0,K_0)$ be a Riemannian symmetric pair which is not Hermitian symmetric and let $\Delta^+$ 
be a Borel-de Siebenthal root order. Let $(K_0,L_0)$ be the Hermitian symmetric pair where $\Delta_0^+$ is the positive root system of $L_0$ and 
$\Delta_0^+\cup \Delta_2$ that of $K_0$.  Recall that $\Psi_{\frak{k}}=\Psi\setminus \{\nu\}\cup \{\epsilon\}$ 
and $\Psi_{\frak{l}}=\Psi\setminus\{\nu\}$ are the 
set of simple roots of $K_0$ and $L_0$ respectively. 
The non-compact simple root of $K_0^*$ is $\epsilon$. If $w_{\frak{k}}^0(\epsilon)=-{\epsilon}$, then $w_{\frak{k}}^0(\Delta_0^+)=\Delta_0^- , w_{\frak{k}}^0(\Delta_2)=\Delta_{-2}$ and $w_Y(\Delta_0^+)=\Delta_0^+ , w_Y(\Delta_2)=\Delta_{-2}$, where $w_Y=w_{\frak{k}}^0 w_{\frak{l}}^0$. Hence $w_Y^2(\Delta_0^+\cup \Delta_2)=\Delta_0^+\cup \Delta_2$. This implies $w_Y^2=Id$. Also $w_{\frak{k}}^0(\epsilon)=-{\epsilon}$ implies $w_Y(\epsilon^*)=-\epsilon^*$.
Let $\Gamma=\{\gamma_1,\ldots,\gamma_r\} \subset \Delta_{-2}$ be the maximal set of strongly orthogonal roots obtained as in \S\ref{stronglyorthogonalroots}.

We begin by establishing the following lemma which 
will be needed in the proof of Theorem \ref{nonquaternionic}.  
We shall use the Littelmann's path model \cite{littelmann}, \cite{littelmann95}. {\it Up to the end of proof of Lemma \ref{gammasubreps} we shall use the symbols $\pi, \pi_\lambda$, etc.,   to denote LS-paths in the sense of Littelmann and are 
not to be confused with discrete series.}

Let $\lambda$ be a dominant integral weight of $\frak{k}$.  Denote by $\pi_{\lambda}$ the LS-path $t\mapsto t\lambda$, $0\leq t\leq 1,$ and by $\mathcal{P}_\lambda$ 
the set of all LS-paths of shape $\lambda$.  Recall that 
the weight of a path $\pi\in \mathcal{P}_\lambda$ is 
its end point $\pi(1)$.  Note that  $w(\pi_{\lambda})=\pi_{w(\lambda)}\in \mathcal{P}_\lambda$ for any element $w$ in the Weyl group of $\frak{k}$.
One has the Littelmann's path operator $f_\alpha, e_\alpha$, for $\alpha\in \Psi_{\frak{k}}, $ having the following 
properties which are relevant for our purposes: \\
$\bullet$ Any $\sigma\in \mathcal{P}_\lambda$ is of the 
form $\sigma=f_I(\pi_\lambda)$ for some monomial 
$f_I=f_{\beta_1}\circ \cdots \circ 
f_{\beta_k}$
in the root operators where $\beta_1,\cdots, \beta_k$ is a sequence of simple roots. (The path $\pi_\lambda$ itself corresponds to the empty sequence.) In particular, this holds for $\sigma=w(\pi_\lambda)=\pi_{w(\lambda)}$ for any $w$ in the Weyl group of $\frak{k}$.\\
$\bullet$  Let $\sigma\in \mathcal{P}_\lambda$. Then $f_\alpha(\sigma)$ (resp. $e_\alpha(\sigma)$) is either zero or belongs to $\mathcal{P}_\lambda$ and has weight $\sigma(1)-\alpha$ (resp. $\sigma(1)+\alpha$).  \\
$\bullet$  If $\pi_1*\pi_2$ is the concatenation of the paths $\pi_1, \pi_2$ where $\pi_j$ are of shapes $\lambda_j, j=1,2$, then
\[f_\alpha(\pi_1*\pi_2)=\left\{ \begin{array}{ll}
f_\alpha(\pi_1)*\pi_2 & \textrm{if $f_\alpha^n(\pi_1)\neq 0$ and $e_\alpha^n(\pi_2)=0$ for some $n\geq 1$,}\\
\pi_1*f_\alpha(\pi_2) & \textrm{otherwise}.
\end{array} \right . \eqno(4)\]
 See \cite[Lemma 2.7]{littelmann95}.

We denote by $V_{\lambda}$ (respectively $E_{\kappa}$), the finite dimensional irreducible representation of $\frak{k}$ (respectively $\frak{l}$) with highest weight $\lambda$ (respectively $\kappa$).  If $V$ is a $\frak{k}$-representation, we shall denote by $\res_{\frak{l}}(V)$ its restriction to $\frak{l}$.  By the Branching Rule \cite[p.331]{littelmann}, we have 
\[ \res_{\frak{l}}(V_{m\epsilon^*})=\sum_{\sigma} E_{\sigma(1)}\eqno(5)\]
where the sum is over all LS-paths $\sigma$ of shape $m\epsilon^*$ which are $\frak{l}$-dominant.

%\begin{remark} \label{wimagelspath}
%{\em
%If $\lambda$ is a dominant integral weight, then 

%}
%\end{remark}

\noindent
\begin{lemma} \label{branchingrule}
(i) The restriction $\res_{\frak{l}}(V_{m\epsilon^*})$ to $\frak{l}$ of the irreducible $\frak{k}$-representation $V_{m\epsilon^*}$ contains $\res_{\frak{l}}(V_{(m-p)\epsilon^*})\otimes \bc_{p\epsilon^*}$ for $0\leq p\leq m$.\\
(ii) Suppose that $w^0_{\frak{k}}(\Delta_0)=\Delta_0$. 
Then $\res_{\frak{l}}(V_{m\epsilon^*})$ contains  $\res_{\frak{l}}(V_{(m-p)\epsilon^*})\otimes \bc_{-p\epsilon^*}$. 

\end{lemma}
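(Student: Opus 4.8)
The plan is to derive both parts from Littelmann's path model. Part (i) I would prove directly, by building a path model for $V_{m\epsilon^*}$ via concatenation with a highest weight path and applying the branching rule (5); part (ii) I would then get from (i) by a duality argument that is available precisely in the tube type case. Throughout, recall that $\bc_{p\epsilon^*}$ is one-dimensional because $\epsilon^*\in\frak{z}_{\frak{l}}^*$, so that $\res_{\frak{l}}(V_{(m-p)\epsilon^*})\otimes\bc_{\pm p\epsilon^*}=\bigoplus_{\tau}E_{\tau(1)\pm p\epsilon^*}$, the sum being over the $\frak{l}$-dominant LS-paths $\tau$ of shape $(m-p)\epsilon^*$.

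For (i), fix such a $\tau$ and write $\tau=f_J(\pi_{(m-p)\epsilon^*})$ for a monomial $f_J$ in the root operators. I would consider the concatenation $\sigma:=\pi_{p\epsilon^*}*\tau$, where $\pi_{p\epsilon^*}$ is the straight path $t\mapsto tp\epsilon^*$. The key point is that $p\epsilon^*$ is $\frak{k}$-dominant, so $\pi_{p\epsilon^*}$ is a highest weight path and $f_\alpha(\pi_{p\epsilon^*})=0$ for every simple root $\alpha\in\Psi_{\frak{k}}$. Applying the concatenation rule (4) repeatedly, each root operator occurring in $f_J$ therefore passes to the right-hand factor and the left factor is never altered, whence $\sigma=f_J(\pi_{p\epsilon^*}*\pi_{(m-p)\epsilon^*})=f_J(\pi_{m\epsilon^*})$ lies in the connected component of the dominant path $\pi_{m\epsilon^*}$, which is a model for $V_{m\epsilon^*}$. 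Moreover $\sigma$ is $\frak{l}$-dominant: on its first segment its value is a multiple of $\epsilon^*$ and on its second it is $p\epsilon^*+\tau(s)$, and in both cases $\langle\,\cdot\,,\alpha^\vee\rangle\ge 0$ for $\alpha\in\Psi_{\frak{l}}$ since $\epsilon^*\perp\Delta_0$ and $\tau$ is $\frak{l}$-dominant. As $\sigma(1)=\tau(1)+p\epsilon^*$, the branching rule (5) applied to this model for $V_{m\epsilon^*}$ shows that $E_{\tau(1)+p\epsilon^*}$ occurs in $\res_{\frak{l}}(V_{m\epsilon^*})$. Since $\tau\mapsto\sigma$ is injective on $\frak{l}$-dominant paths, the comparison respects multiplicities, giving $\res_{\frak{l}}(V_{(m-p)\epsilon^*})\otimes\bc_{p\epsilon^*}\subset\res_{\frak{l}}(V_{m\epsilon^*})$.

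For (ii), I would first note that the hypothesis $w^0_{\frak{k}}(\Delta_0)=\Delta_0$ is equivalent to $w^0_{\frak{k}}(\epsilon)=-\epsilon$: the diagram automorphism $-w^0_{\frak{k}}$ fixes the unique non-compact simple root $\epsilon$ exactly when it permutes the compact simple roots among themselves, and this in turn forces $w^0_{\frak{k}}(\epsilon^*)=-\epsilon^*$. Consequently $V_{n\epsilon^*}^{\,*}\cong V_{-w^0_{\frak{k}}(n\epsilon^*)}=V_{n\epsilon^*}$ for every $n$, so each $V_{n\epsilon^*}$ is self-dual as a $\frak{k}$-module and hence $\res_{\frak{l}}(V_{n\epsilon^*})$ is self-dual as an $\frak{l}$-module. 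Since also $\bc_{p\epsilon^*}^{\,*}\cong\bc_{-p\epsilon^*}$, taking the $\frak{l}$-contragredient of the containment from (i) and using complete reducibility (which makes dualizing preserve multiplicities) yields $\res_{\frak{l}}(V_{(m-p)\epsilon^*})\otimes\bc_{-p\epsilon^*}\subset\res_{\frak{l}}(V_{m\epsilon^*})$, which is exactly (ii).

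The hard part is entirely in (i): one must be sure that $\sigma=\pi_{p\epsilon^*}*\tau$ really belongs to a model computing $V_{m\epsilon^*}$, rather than merely to the tensor-product model $\mathcal{P}_{p\epsilon^*}*\mathcal{P}_{(m-p)\epsilon^*}$ for $V_{p\epsilon^*}\otimes V_{(m-p)\epsilon^*}$, and that the branching rule may be applied to this possibly non-standard model; both of these rest on the independence of Littelmann's construction from the chosen model. Once (i) is established, part (ii) is a formal consequence of the self-duality peculiar to the tube type case and needs no further path combinatorics.
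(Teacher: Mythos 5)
Your part (ii) is correct and is essentially the paper's own argument: the hypothesis gives $w^0_{\frak{k}}(\epsilon^*)=-\epsilon^*$, hence each $V_{n\epsilon^*}$ is self-dual, and dualizing the containment from (i) yields (ii). The gap is in part (i), and it is a genuine one: your key combinatorial claim that $f_\alpha(\pi_{p\epsilon^*})=0$ for every simple root $\alpha\in\Psi_{\frak{k}}$ is false. A dominant straight-line path is annihilated by the \emph{raising} operators $e_\alpha$, not by the lowering operators $f_\alpha$; in fact $f_\epsilon(\pi_{p\epsilon^*})\neq 0$ as soon as $p>0$, since $2\langle p\epsilon^*,\epsilon\rangle/\langle\epsilon,\epsilon\rangle=p>0$. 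With your ordering $\sigma=\pi_{p\epsilon^*}*\tau$ this is fatal. Write $\tau=f_{\alpha_q}\cdots f_{\alpha_1}(\pi_{(m-p)\epsilon^*})$ with $\tau\neq\pi_{(m-p)\epsilon^*}$ and all intermediate results nonzero; then necessarily $\alpha_1=\epsilon$, because every compact simple root pairs to zero with $(m-p)\epsilon^*$ and so kills the straight path. At this first step, rule (4) asks whether there is $n\geq 1$ with $f_\epsilon^n(\pi_{p\epsilon^*})\neq 0$ and $e_\epsilon^n(\pi_{(m-p)\epsilon^*})=0$; the value $n=1$ works, so $f_\epsilon$ acts on the \emph{left} factor $\pi_{p\epsilon^*}$, and the identity $f_J(\pi_{p\epsilon^*}*\pi_{(m-p)\epsilon^*})=\pi_{p\epsilon^*}*f_J(\pi_{(m-p)\epsilon^*})$ that your argument needs fails at the very first operator.

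The repair is to reverse the order of concatenation, which is what the paper does: write $\pi_{m\epsilon^*}=\pi_{(m-p)\epsilon^*}*\pi_{p\epsilon^*}$ with the dominant segment on the \emph{right}. Then $e_\alpha^n(\pi_{p\epsilon^*})=0$ for every simple $\alpha$ and every $n\geq 1$, so rule (4) (applied with $n=1$ at each stage) sends every $f_{\alpha_i}$ to the left factor, giving $f_{\alpha_q}\cdots f_{\alpha_1}(\pi_{m\epsilon^*})=\tau*\pi_{p\epsilon^*}$, an $\frak{l}$-dominant LS-path of shape $m\epsilon^*$ with endpoint $\tau(1)+p\epsilon^*$; injectivity of $\tau\mapsto\tau*\pi_{p\epsilon^*}$ then gives the containment with multiplicities, exactly as you intended. (Your worry about applying the branching rule to a ``non-standard model'' is moot: the concatenation of two straight segments in the direction $\epsilon^*$ \emph{is} the straight path $\pi_{m\epsilon^*}$, so everything takes place inside $\mathcal{P}_{m\epsilon^*}$ itself.)
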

\begin{proof} (i)
 Note that $\pi_{m\epsilon^*}$ equals the concatenation $\pi_{(m-p)\epsilon^*}*\pi_{p\epsilon^*}$.

Let $\tau$ be an LS-path of shape $(m-p)\epsilon^*$ which
is $\frak{l}$-dominant. Then $\tau=f_{\alpha_q}\cdots f_{\alpha_1}\pi_{(m-p)\epsilon^*}$ for some sequence 
$\alpha_1,\ldots, \alpha_q$ of simple roots in $\Psi_{\frak{k}}$. 
Then $f_{\alpha_i}\ldots f_{\alpha_1}(\pi_{(m-p)\epsilon^*})\neq 0$ for $1\leq i\leq q$. It follows that 
$f_{\alpha_q}\ldots f_{\alpha_1}(\pi_{m\epsilon^*})
=f_{\alpha_q}\ldots f_{\alpha_1}(\pi_{(m-p)\epsilon^*}*\pi_{p\epsilon^*})=f_{\alpha_q}\ldots f_{\alpha_1}(\pi_{(m-p)\epsilon^*})*\pi_{p\epsilon^*}
=\tau*\pi_{p\epsilon^*}$ since $e_\alpha(\pi_{p\epsilon^*})=0$. Thus we see that if $\tau$ is any 
$\frak{l}$-dominant LS-path of shape $(m-p)\epsilon^*$, then 
$\tau*\pi_{p\epsilon^*}$ is an LS-path of shape $m\epsilon^*$. It is clear that $\tau*\pi_{p\epsilon^*}$ is $\frak{l}$-dominant. 
Since $E_{\tau*\pi_{p\epsilon^*}(1)}=E_{\tau(1)+p\epsilon^*} \cong E_{\tau(1)}\otimes \bc_{p\epsilon^*}$ and since for any path 
$\sigma$,  $\sigma*\pi_{p\epsilon^*}=\tau*\pi_{p\epsilon^*}$ implies $\sigma=\tau$, it follows that  
$\res_{\frak{l}}(V_{m\epsilon^*})$ contains $\res_{\frak{l}}(V_{(m-p)\epsilon^*})\otimes \bc_{p\epsilon^*}$ in view of (5). 

(ii) Suppose that $w^0_{\frak{k}}(\Delta_0)=\Delta_0.$ This is equivalent to the condition that $w^0_{\frak{k}}(\epsilon^*)=-\epsilon^*$, which in turn is equivalent to the requirement 
that $V_{q\epsilon^*}$ is self-dual as a $\frak{k}$-representation for all $q\geq 1$.  
Since $\res_{\frak{l}}(V_{(m-p)\epsilon^*})\otimes \bc_{p\epsilon^*}$ is contained in $V_{m\epsilon^*}$, so is its dual. That is, $\res_{\frak{l}}(V_{(m-p)\epsilon^*})\otimes \bc_{-p\epsilon^*}$ is contained in 
$\res_{\frak{l}}(V_{m\epsilon^*})$.   
\end{proof}

Although the following lemma can be deduced from the explicit branching rule in~\cite{littspherical},  at least in the case
$w^0_{\frak{k}}(\Delta_0)=\Delta_0$,   our proof below is more direct and self-contained.
\begin{lemma}\label{subreps}
 Let $0\leq p_r\leq\cdots\leq p_1\leq p_0\leq m$ be a sequence of integers. Then $\res_{\frak{l}} V_{m\epsilon^*}$ contains $E_\kappa$ where $\kappa =m\epsilon^*+p_1\gamma_1+\cdots p_r\gamma_r$.  Moreover, if 
$w^0_{\frak{k}}(\Delta_0)=\Delta_0$, then $E_\lambda$ 
occurs in $\res_{\frak{l}} V_{m\epsilon^*}$ where $
\lambda=
   (m-2p_0)\epsilon^*-(\sum_{1\leq j\leq r}p_j\gamma_{r+1-j})$.
\end{lemma}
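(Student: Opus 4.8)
The plan is to detect an $L_0$-type $E_\kappa$ inside $\res_{\frak l}V_{m\epsilon^*}$ by means of the Branching Rule (5): $E_\kappa$ occurs precisely when there is an $\frak l$-dominant LS path of shape $m\epsilon^*$ with endpoint $\kappa$. Thus both assertions become statements about producing suitable LS paths. I would first dispose of the ``moreover'' statement by reducing it to the first statement, and then put all the real work into the first statement.

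For the reduction, observe that in the tube case $w^0_{\frak k}(\epsilon^*)=-\epsilon^*$, so each $V_{q\epsilon^*}$ is self-dual as a $\frak k$-module (already noted in the proof of Lemma \ref{branchingrule}); hence $\res_{\frak l}V_{q\epsilon^*}$ is self-dual as an $\frak l$-module, and $E_\kappa\subset\res_{\frak l}V_{q\epsilon^*}$ forces $E_{-w^0_{\frak l}(\kappa)}\subset\res_{\frak l}V_{q\epsilon^*}$. Applying the first statement with $m$ replaced by $p_0$ gives $E_{\kappa_0}\subset\res_{\frak l}V_{p_0\epsilon^*}$ with $\kappa_0=p_0\epsilon^*+\sum_j p_j\gamma_j$; dualizing, and using that $\epsilon^*$ is fixed by the Weyl group of $\frak l$ together with $w^0_{\frak l}(\gamma_j)=\gamma_{r+1-j}$ (Remark \ref{waction}(i)), yields $E_{-p_0\epsilon^*-\sum_j p_j\gamma_{r+1-j}}\subset\res_{\frak l}V_{p_0\epsilon^*}$. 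Finally Lemma \ref{branchingrule}(i), with the positive shift $\bc_{(m-p_0)\epsilon^*}$ (legitimate as $p_0\le m$), embeds this into $\res_{\frak l}V_{m\epsilon^*}$ and produces $E_{(m-2p_0)\epsilon^*-\sum_j p_j\gamma_{r+1-j}}=E_\lambda$, as required.

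For the first statement I would again peel off a central shift: Lemma \ref{branchingrule}(i) gives $\res_{\frak l}V_{m\epsilon^*}\supseteq\res_{\frak l}V_{p_0\epsilon^*}\otimes\bc_{(m-p_0)\epsilon^*}$, so it suffices to treat $m=p_0$, i.e. to assume $m\ge p_1\ge\cdots\ge p_r\ge0$. The key numerical input is $\langle\epsilon^*,\gamma_j^\vee\rangle=-1$ for all $j$: the central element $H_{\epsilon^*}$ acts by the same scalar on every root space in $\frak u_{-2}$, and $\|\gamma_j\|=\|\epsilon\|$ (Remark \ref{waction}(iii)), so $\langle\epsilon^*,\gamma_j^\vee\rangle=\langle\epsilon^*,\gamma_1^\vee\rangle=\langle\epsilon^*,-\epsilon^\vee\rangle=-1$. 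As the $\gamma_j$ are mutually strongly orthogonal the reflections $s_{\gamma_j}$ commute, and writing $w_i=s_{\gamma_i}\cdots s_{\gamma_1}$, $S_i=\gamma_1+\cdots+\gamma_i$, one obtains $w_i(\epsilon^*)=\epsilon^*+S_i$, an extremal weight of $V_{\epsilon^*}$. I would then build a path of shape $m\epsilon^*$ by concatenating the straight Weyl-translated segments $\pi_{w_i(\epsilon^*)}=w_i(\pi_{\epsilon^*})$, taking $(p_i-p_{i+1})$ copies of $\pi_{w_i(\epsilon^*)}$ for $1\le i\le r$ (with $p_{r+1}=0$) and $(m-p_1)$ copies of $\pi_{\epsilon^*}$; there are $m$ segments in all and the endpoint is $m\epsilon^*+\sum_i(p_i-p_{i+1})S_i=m\epsilon^*+\sum_j p_j\gamma_j=\kappa$.

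Two verifications remain, and these carry the difficulty. First, that this concatenation really is an LS path of shape $m\epsilon^*$: already the case where only $\gamma_1=-\epsilon$ intervenes (where $\kappa=m\epsilon^*-p_1\epsilon$ is reached by $f_\epsilon^{p_1}(\pi_{m\epsilon^*})$) shows that the segments must be arranged in the correct, ``most-bent-first'', order, and that the clean way to certify the path is to exhibit it as $f_I(\pi_{m\epsilon^*})$ for an explicit monomial $f_I$ of root operators, invoking the concatenation rule (4) repeatedly as in the proof of Lemma \ref{branchingrule}. Producing the sequence of simple-root operators that realizes the commuting reflections $s_{\gamma_j}$ (whose roots $-\gamma_j\in\Delta_2$ are in general non-simple) is the main obstacle, and is precisely the self-contained substitute for the explicit branching rule of \cite{littspherical} mentioned above. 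Second, that the path is $\frak l$-dominant: here Proposition \ref{sumgamma}(ii) is decisive, since by Abel summation $\sum_j p_j\gamma_j=\sum_i(p_i-p_{i+1})S_i$ with $p_i-p_{i+1}\ge0$, and each $S_i$ is $\frak l$-dominant — if a compact simple root $\alpha$ occurs in $S_i$ then $\langle S_i,\alpha\rangle=0$ by Proposition \ref{sumgamma}(ii), whereas if it does not then $\langle S_i,\alpha\rangle\ge0$ because $S_i$ is a non-positive integral combination of simple roots; as $\langle\epsilon^*,\alpha\rangle=0$ too, every segment direction $w_i(\epsilon^*)=\epsilon^*+S_i$ lies in the closed $\frak l$-dominant cone, which is what controls $\frak l$-dominance of the path.
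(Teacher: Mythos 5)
Your overall strategy coincides with the paper's. The ``moreover'' part is handled exactly as in the paper (self-duality of $V_{p_0\epsilon^*}$, the identity $w^0_{\frak{l}}(\gamma_j)=\gamma_{r+1-j}$ from Remark \ref{waction}(i), and the central shift of Lemma \ref{branchingrule}(i)); your computation $s_{\gamma_i}(\epsilon^*)=\epsilon^*+\gamma_i$ and your target path --- a most-bent-first concatenation of the translated segments $\pi_{w_i(\epsilon^*)}$ --- are precisely the paper's $\tau=\pi_r\ast\cdots\ast\pi_2\ast\pi_{(p_1-p_2)(\epsilon^*-\epsilon)}\ast\pi_{(m-p_1)\epsilon^*}$; and your $\frak{l}$-dominance argument via Proposition \ref{sumgamma}(ii) is a slightly cleaner packaging of the paper's check on break-points. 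But the argument is incomplete at exactly the point you flag yourself: you never establish that this concatenation is an LS path of shape $m\epsilon^*$, i.e.\ that it lies in $\mathcal{P}_{m\epsilon^*}$, which is what the branching rule (5) requires. Declaring this ``the main obstacle'' and stopping leaves the central combinatorial content of the lemma unproved, so as it stands this is a genuine gap rather than a proof.

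You have also slightly misidentified what is needed to close the gap: no explicit sequence of simple-root operators realizing $s_{\gamma_1}\cdots s_{\gamma_j}$ is required. The paper uses two softer facts. First, any extremal-weight path $\pi_{w(\lambda)}=w(\pi_\lambda)$ equals $f_{I}(\pi_\lambda)$ for \emph{some} monomial $f_I$ in root operators (the first bullet preceding Lemma \ref{branchingrule}); this gives the monomials $f_{I_j}$ with $f_{I_j}(\pi_{(p_j-p_{j+1})\epsilon^*})=\pi_j$ for free. Second, since each $f_\alpha$ lowers the endpoint by $\alpha$, the only simple roots occurring in $f_{I_j}$ are those in the support of $-(p_j-p_{j+1})(\gamma_1+\cdots+\gamma_j)$. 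Feeding this into Proposition \ref{sumgamma}(ii) yields $\langle\epsilon^*+\gamma_1+\cdots+\gamma_{j+1},\alpha\rangle=0$ for every compact simple $\alpha$ occurring in $f_{I_j}$, hence $f_\alpha(\pi_{j+1})=0$ (and a direct computation gives $f_\epsilon(\pi_j)=0$ for $j\geq 2$). These vanishing statements are exactly what the concatenation rule (4) needs in order to route each operator of $f_{I_j}$ onto the intended segment and nowhere else, giving $\tau=f_\epsilon^{p_1-p_2}f_{I_2}\cdots f_{I_r}(\pi_{m\epsilon^*})\in\mathcal{P}_{m\epsilon^*}$; your dominance argument then finishes the proof. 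So the missing step is not the construction of a word for the $s_{\gamma_j}$ but the support-plus-orthogonality bookkeeping that makes rule (4) deterministic on your concatenation.
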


\begin{proof}  Recall that $\gamma_1=-\epsilon$. 
Since the $\gamma_i$ are pairwise orthogonal we 
see that $s_{\gamma_i}s_{\gamma_j}=s_{\gamma_j}s_{\gamma_i}$.
Also since $\gamma_j\in \Delta_{-2}$, $\langle\epsilon^*,\gamma_i\rangle=\langle \epsilon^*,-\epsilon\rangle=-||\epsilon||^2/2$.  As noted in Remark \ref{waction}(iii), all the 
$\gamma_i$ have the same length: 
$||\gamma_i||=||\epsilon||$.  Using these facts a straightforward computation yields that 
$s_{\gamma_i}(\epsilon^*)=\epsilon^*+\gamma_i, s_{\gamma_i}(\gamma_j)=\gamma_j$ for 
$1\leq i,j\leq r, i\ne j$. Defining $p_{r+1}=0$,  it follows that 
$s_{\gamma_1}.\ldots.s_{\gamma_j}(\pi_{(p_j-p_{j+1})\epsilon^*})=:\pi_j$ is the straight-line path of weight $(p_j-p_{j+1})(\epsilon^*+\gamma_1+\cdots+\gamma_j)$ and hence we have $f_{I_j}(\pi_{(p_j-p_{j+1})\epsilon^*})=\pi_j$ for a suitable monomial in root operators $f_{I_j}$ of simple roots of $\frak{k}$ for all $2\leq j\leq r$. 
So, writing $\pi_{m\epsilon^*}=\pi_{p_r\epsilon^*}*\pi_{(p_{r-1}-p_r)\epsilon^*}*\cdots *\pi_{(p_2-p_3)\epsilon^*}*\pi_{(m-p_2)\epsilon^*}$ we have $f_{I_r}(\pi_{m\epsilon^*})=\pi_r*\pi_{(p_{r-1}-p_r)\epsilon^*}*\cdots*\pi_{(p_2-p_3)\epsilon^*}*\pi_{(m-p_2)\epsilon^*}$, in view of (4).  Clearly $f_\epsilon(\pi_j)=0$ for all $2\leq j\leq r$.  Also in view of the Proposition \ref{sumgamma}(ii), if the coefficient of a compact simple root $\alpha$ of $\frak{k}$ in the expression of $\sum_{1\leq i\leq j}\gamma_i$ is non zero, then $f_\alpha(\pi_j)=0$.  Now for a simple root $\alpha$ of $\frak{k}$, if $f_\alpha$ is involved in the expression of $f_{I_j}$, then the coefficient of $\alpha$ in the expression of $\sum_{1\leq i\leq (j+1)}\gamma_i$ is non zero. Hence $f_\alpha(\pi_{j+1})=0$ for $2\leq j\leq r-1$. Therefore $f_{I_2}\ldots f_{I_r}(\pi_{m\epsilon^*})=\pi_r*\pi_{r-1}*\cdots*\pi_2*\pi_{(m-p_2)\epsilon^*}$, in view of (4). Since $f_\epsilon(\pi_j)=0$ for all $2\leq j\leq r$ and $f_\epsilon^{p_1-p_2}(\pi_{(m-p_2)\epsilon^*})
=\pi_{(p_1-p_2)(\epsilon^*-\epsilon)}*\pi_{(m-p_1)\epsilon^*}$, we obtain $\tau:=f_\epsilon^{p_1-p_2} f_{I_2}\ldots f_{I_r}(\pi_{m\epsilon^*})=\pi_r*\cdots *\pi_2 *\pi_{(p_1-p_2)(\epsilon^*-\epsilon)}*\pi_{(m-p_1)\epsilon^*}$, again by (4). 
The break-points and the terminal point of $\tau$ are 
$p_r(\epsilon^*+\gamma_1+\cdots+\gamma_r), 
p_{r-1}(\epsilon^*+\gamma_1+\cdots+\gamma_{r-1})+p_r\gamma_r, p_{r-2}(\epsilon^*+\gamma_1+\cdots+\gamma_{r-2})+p_{r-1}\gamma_{r-1}+p_r\gamma_r,\ldots ,p_2(\epsilon^*+\gamma_1+\gamma_2)+p_3\gamma_3+\cdots +p_r\gamma_r, p_1(\epsilon^*+\gamma_1)+p_2\gamma_2+\cdots +p_r\gamma_r$ and $m\epsilon^*+p_1\gamma_1+p_2\gamma_2+\cdots+p_r\gamma_r$. 
All these are $\frak{l}$-dominant weights (since $p_1\geq p_2\geq \cdots \geq p_r\geq 0$) and so we conclude that $\tau$
is an $\frak{l}$-dominant LS-path. Hence by the branching rule, $E_{m\epsilon^*+p_1\gamma_1+p_2\gamma_2+\cdots+p_r\gamma_r}$ occurs in $V_{m\epsilon^*}$. 

Now suppose $w^0_{\frak{k}}(\Delta_0)=\Delta_0$. 
By Lemma \ref{branchingrule}, we have $\res_{\frak{l}} V_{m\epsilon^*}$ contains $\res_{\frak{l}} V_{p_0\epsilon^*}\otimes E_{(m-p_0)\epsilon^*}$.  By what has been proved already $\res_{\frak{l}}V_{p_0\epsilon^*}$ 
contains $E_{p_0\epsilon^*+p_1\gamma_1+p_2\gamma_2+\cdots+p_r\gamma_r}=:E$.  Since $V_{p_0\epsilon^*}$ is 
self-dual, $\hom(E,\mathbb{C})$ is contained in $\res_{\frak{l}}V_{p_0\epsilon^*}$.  The highest weight of $\hom(E,\mathbb{C})$ is $-p_0\epsilon^*-\sum_{1\leq j\leq r}p_jw_{\frak{l}}^0(\gamma_j)=-p_0\epsilon^*-p_1\gamma_r-p_2\gamma_{r-1}+\cdots -p_r\gamma_1$ using Remark \ref{waction}(i).    Tensoring with $E_{(m-p_0)\epsilon^*}$ we conclude that $E_\lambda$ occurs in $\res_{\frak{l}}V_{m\epsilon^*}$ with $\lambda=(m-2p_0)\epsilon^*-p_r\gamma_1 -p_{r-1}\gamma_2 -\cdots -p_2\gamma_{r-1} -p_1\gamma_r$. \end{proof}

Write $\gamma=\varphi+t\epsilon^*$ with $\langle \varphi,\mu\rangle=0$. Then $\varphi$ is $\frak{k}$-integral weight and $t$ is an integer ($\gamma$ being a $\frak{k}$-integral weight).  Also $\gamma$ is $\frak{l}$-dominant implies that $\varphi$ is $\frak{l}$-dominant. Since $\langle \gamma+\rho_{\frak{k}}, \mu \rangle <0$, we have $t<-2 \langle \rho_{\frak{k}}, \mu \rangle / ||\epsilon||^2$. Assuming $w_{\frak{k}}^0(\epsilon)=-\epsilon$, we get $\langle w_Y(\varphi), \alpha\rangle \geq 0$ when $\alpha$ is in $\Delta_0^+$ and $\langle w_Y(\varphi),\epsilon\rangle=0$. So $w_Y(\varphi)$ is $\frak{k}$-dominant integral weight.

\begin{lemma}\label{gammasubreps} With the above notation,  suppose that $w_{\frak{k}}^0(\epsilon)=-\epsilon$ and 
that $E_\tau$ is a subrepresentation of $\res_{\frak{l}}(V_{m\epsilon^*})$. Then $E_{\varphi+w_Y(\tau)}$ is a subrepresentation of 
$\res_{\frak{l}}(V_{w_Y(\varphi)+m\epsilon^*})$.
\end{lemma}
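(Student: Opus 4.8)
The plan is to bypass a direct LS‑path construction for this step and instead realise the desired $\frak{l}$‑type inside the Cartan component of a tensor product. Write $\lambda:=w_Y(\varphi)+m\epsilon^*$. First I would record the bookkeeping forced by $w^0_{\frak k}(\epsilon)=-\epsilon$: since $w_Y(\Delta_0^+)=\Delta_0^+$, the element $w_Y$ permutes the positive roots of $\frak{l}$ and hence carries $\frak{l}$‑dominant weights to $\frak{l}$‑dominant weights; moreover $w_Y(\varphi)$ is $\frak{k}$‑dominant and $w_Y^2=\mathrm{Id}$, and $w_Y(m\epsilon^*)=-m\epsilon^*$. Thus $\varphi$, $w_Y(\tau)$ and $\varphi+w_Y(\tau)$ are $\frak{l}$‑dominant and $\lambda$ is $\frak{k}$‑dominant, so every module below is defined. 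Two inputs are then needed. First, $E_{w_Y(\tau)}$ occurs in $\res_{\frak l}(V_{m\epsilon^*})$: starting from an $\frak{l}$‑dominant LS‑path $\sigma$ of shape $m\epsilon^*$ with $\sigma(1)=\tau$ (which exists by (5), as $E_\tau\subset\res_{\frak l}V_{m\epsilon^*}$), the path $w_Y(\sigma)$ is again an $\frak{l}$‑dominant LS‑path of shape $m\epsilon^*$ ending at $w_Y(\tau)$, so (5) produces an $\frak{l}$‑highest weight vector $u\in V_{m\epsilon^*}$ of weight $w_Y(\tau)$. Second, in $V_{w_Y(\varphi)}$ let $v$ be the extremal weight vector of weight $\varphi=w_Y(w_Y(\varphi))$; being extremal of $\frak{l}$‑dominant weight, $v$ is killed by the root vectors $X_\alpha$ for all $\alpha\in w_Y(\Delta_{\frak k}^+)$, in particular by every $X_\beta$ with $\beta\in\Psi_{\frak l}$.

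Next I would take the $\frak{k}$‑equivariant projection $p\colon V_{w_Y(\varphi)}\otimes V_{m\epsilon^*}\to V_\lambda$ onto the multiplicity‑one Cartan component and set $\xi:=p(v\otimes u)$. Its weight is $\varphi+w_Y(\tau)$, and for each $\beta\in\Psi_{\frak l}$ one has
\[
X_\beta\,\xi=p\bigl(X_\beta v\otimes u+v\otimes X_\beta u\bigr)=0,
\]
since $X_\beta v=0$ by construction and $X_\beta u=0$ because $u$ is $\frak{l}$‑highest. Hence, as soon as $\xi\neq0$, the vector $\xi$ is an $\frak{l}$‑highest weight vector of $\frak{l}$‑dominant weight $\varphi+w_Y(\tau)$ lying in $V_\lambda$, so $U(\frak l)\,\xi\cong E_{\varphi+w_Y(\tau)}$ sits inside $\res_{\frak l}V_\lambda$, which is exactly the claim.

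The one substantial point—and the step I expect to be the main obstacle—is the nonvanishing of $\xi$. I would deduce it from injectivity of the linear map $\Phi\colon V_{m\epsilon^*}\to V_\lambda$, $u'\mapsto p(v\otimes u')$. Because $v$ is annihilated by $X_\alpha$ for all $\alpha\in w_Y(\Delta_{\frak k}^+)$, the map $\Phi$ intertwines this nilpotent twisted family of raising operators, so $\ker\Phi$ is a $w_Y(\frak n)$‑stable subspace, where $\frak n$ is the sum of the positive root spaces of $\frak k$. Were $\ker\Phi$ nonzero it would contain a vector killed by $w_Y(\frak n)$, i.e. the extremal vector $v_{-m\epsilon^*}$ of weight $w_Y(m\epsilon^*)=-m\epsilon^*$ (unique up to scalar in the irreducible $V_{m\epsilon^*}$). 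But $\Phi(v_{-m\epsilon^*})=p(v\otimes v_{-m\epsilon^*})$ is the product of the two extremal vectors for the common Weyl element $w_Y$, whose weight $w_Y(\lambda)$ has norm $\|\lambda\|$, strictly larger than the norm of every weight of every other irreducible constituent of $V_{w_Y(\varphi)}\otimes V_{m\epsilon^*}$; therefore the $w_Y(\lambda)$‑weight space lies entirely in $V_\lambda$ and $p$ fixes it, giving $\Phi(v_{-m\epsilon^*})\neq0$. This contradiction forces $\ker\Phi=0$, whence $\xi=\Phi(u)\neq0$.

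Finally I would note why I avoid the purely path‑theoretic route. The tempting construction—concatenating $\pi_\varphi$ with $w_Y(\sigma)$, equivalently analysing $\pi_{w_Y(\varphi)}*\sigma$—does produce an $\frak{l}$‑dominant path ending at the right weight, but its shape is in general \emph{strictly smaller} than $\lambda$: the positive offsets $\langle w_Y(\varphi),\alpha^\vee\rangle$ at the compact simple roots halt the raising operators too early, so the path lands in a lower component. Forcing this approach to yield shape exactly $\lambda$ would require the delicate rule‑(4) bookkeeping of Lemma \ref{subreps} together with Proposition \ref{sumgamma}(ii). The tensor–projection argument sidesteps the shape issue completely, which is why I would organise the proof around it.
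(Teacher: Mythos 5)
Your argument is genuinely different from the paper's and, after one repair, it works. The paper's proof concatenates LS-paths in the order $\pi_{m\epsilon^*}*\pi_{w_Y(\varphi)}$: because the second factor is a $\frak{k}$-dominant straight-line path, rule (4) forces the root operators realising $\tau$ to act entirely on the first factor, so $\eta*\pi_{w_Y(\varphi)}$ is an $\frak{l}$-dominant path in the model generated by the dominant concatenation with endpoint $\lambda_0=w_Y(\varphi)+m\epsilon^*$; this yields $E_{w_Y(\varphi)+\tau}\subset \res_{\frak{l}}V_{\lambda_0}$, and the desired type $E_{\varphi+w_Y(\tau)}$ is then produced by translating an $\frak{l}$-highest weight vector by a representative $k\in N_{K_0}(T_0)$ of $w_Y$, using $w_Y(\Delta_0^+)=\Delta_0^+$. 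You replace all of this by the Cartan-component projection $p\colon V_{w_Y(\varphi)}\otimes V_{m\epsilon^*}\to V_{\lambda_0}$ applied to the tensor of the extremal vector of weight $\varphi$ with an $\frak{l}$-highest vector of weight $w_Y(\tau)$; your nonvanishing argument (the kernel of $u'\mapsto p(v\otimes u')$ is stable under the twisted nilpotent algebra $\sum_{\alpha\in w_Y(\Delta_{\frak{k}}^+)}\frak{g}_\alpha$, hence would contain the $w_Y$-extremal vector, whose image has the extremal weight $w_Y(\lambda_0)$ and survives the projection by the norm comparison) is correct and self-contained. What your route buys is independence from the concatenation bookkeeping; what it costs is nothing serious, though the paper's rule-(4) mechanism is the one reused in Lemma \ref{subreps}.

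The step you must repair is your first input. You claim that if $\sigma$ is an $\frak{l}$-dominant LS-path of shape $m\epsilon^*$ with $\sigma(1)=\tau$, then the pointwise image $w_Y(\sigma)$ is again an LS-path of shape $m\epsilon^*$. This is false in general: LS-paths are defined by decreasing chains in the Bruhat order on $W/W_{m\epsilon^*}$, and left multiplication by a Weyl group element does not preserve that order (already for $\frak{sl}(2,\bc)$ the reflected path goes up first and then down and does not lie in $\mathcal{P}_{n\epsilon^*}$); only the straight-line extremal paths $\pi_{w(\lambda)}$ are guaranteed to stay in $\mathcal{P}_\lambda$. The fact you actually need, namely that $E_{w_Y(\tau)}$ occurs in $\res_{\frak{l}}V_{m\epsilon^*}$, is nevertheless true and is proved by exactly the conjugation device used at the end of the paper's proof: if $u_0$ is an $\frak{l}$-highest weight vector of weight $\tau$ and $k\in N_{K_0}(T_0)$ represents $w_Y$, then $k\cdot u_0$ has weight $w_Y(\tau)$ and is annihilated by $X_\alpha$ for all $\alpha\in\Delta_0^+$ since $w_Y(\Delta_0^+)=\Delta_0^+$. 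With that substitution your proof is complete. Finally, your closing objection to the path-theoretic route is not an obstruction the paper actually faces: Littelmann's theory only requires one to start from a dominant path with endpoint $\lambda_0$, not an LS-path of shape $\lambda_0$, and the paper's choice of concatenation order (dominant straight path placed second) avoids the difficulty you describe.
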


\begin{proof} Let $\pi$ denote the path $\pi_{m\epsilon^*}*\pi_{w_Y(\varphi)}$. Then $Im(\pi)$  is contained in the dominant Weyl chamber (of $\frak{k}$) and $\pi(1)=w_Y(\varphi)+m\epsilon^*$.  Since $E_\tau$ is contained in $\res_{\frak{l}}(V_{m\epsilon^*})$, there exist a sequence $\alpha_1,\ldots ,\alpha_k$ of simple roots of $\frak{k}$ such that  $f_{\alpha_1}\ldots f_{\alpha_k}(\pi_{m\epsilon^*})=:\eta$ is $\frak{l}$-dominant path with $\eta (1)=\tau$.  Since $\pi_{w_Y(\varphi)}$ is $\frak{k}$-dominant path, $\theta:=f_{\alpha_1}\ldots f_{\alpha_k}(\pi)=\eta*\pi_{w_Y(\varphi)}$, in view of (4).  Clearly $\theta$ is $\frak{l}$-dominant and $\theta(1)=\tau+w_Y(\varphi)$. Hence by the branching rule \cite[p.501]{littelmann95}, $E_{w_Y(\varphi)+\tau}$ occurs in $\res_{\frak{l}}(V_{w_Y(\varphi)+m\epsilon^*})$.

 Let $\Phi :K_0\lr GL(V_{\lambda_0})$ be the representation, where $\lambda_0:=w_Y(\varphi)+m\epsilon^*$.  Then $\phi:=d\Phi :\frak{k}_0\lr End(V_{\lambda_0})$.  For $ k\in K_0$ and $X\in \frak{k}_0$, we have
\[\Phi(k^{-1})\circ \phi(X)\circ \Phi(k)=\phi(\textrm{Ad}(k^{-1})X) \eqno(6)\]
Let $v \in V_{\lambda_0}$ is a weight vector of weight $\lambda:=w_Y(\varphi)+\tau$ such that it is a highest weight vector of $E_{\lambda}$.  Now $w_Y=(\textrm{Ad}(k)|_{i\frak{t}_0})^*$ for some $ k\in N_{K_0}(T_0)$.  Then $\Phi(k)v$ is a weight vector of weight $w_Y(\lambda)$ and it is killed by all root vectors $X_{\alpha}$ ($\alpha \in \Delta_0^+$), in view of (6); since $w_Y(\Delta_0^+)=\Delta_0^+ $.  Hence $\Phi(k)v$ is a highest weight vector of an irreducible $L_0$- submodule of $\res_{\frak{l}}(V_{\lambda_0})$.  Therefore $E_{w_Y(\lambda)}=E_{\varphi+w_Y(\tau)}$ occurs in $\res_{\frak{l}}(V_{\lambda_0})$. \end{proof}

We are now ready to prove 
Theorem \ref{nonquaternionic}.

\noindent
{\it Proof of Theorem \ref{nonquaternionic}.}  
Write $\gamma=\varphi+t\epsilon^*$ where $\langle \varphi, \mu \rangle=0$.
  
  We have 
  \[(\pi_{\gamma+\rho_{\frak{k}}})_{L_0}=E_\gamma\otimes S^*(\frak{u}_{-2})=\bigoplus (E_{\gamma}\otimes E_{a_1\gamma_1 +\cdots+a_r\gamma_r})\]
where the sum is over all integers $a_1\geq \cdots \geq a_r\geq 0$. (In view of Theorem \ref{schmid}).\\  
 So $(\pi_{\gamma+\rho_{\frak{k}}})_{L_0}$ contains $E_{\gamma+a_1\gamma_1+\cdots+a_r\gamma_r}$, for all integers $a_1\geq \cdots \geq a_r\geq 0$.

 Let $k\geq 1$ be the least integer such that $S^k(\frak{u}_{-1})$ has a one-dimensional $L_0$-subrepresentation, which is necessarily of the form $E_{q\epsilon^*}$ for some $q<0$. 
 Now $(\pi_{\gamma+\rho_{\frak{g}}})_{K_0}$ contains $\oplus_{j\geq 0} H^s(Y;\mathbb{E}_{\gamma+jq\epsilon^*})$, by Theorem \ref{bds}. 
By Borel-Weil-Bott theorem, $ H^s(Y; \mathbb{E}_{\gamma+jq\epsilon^*})$ is an 
irreducible finite dimensional $K_0$-representation with highest weight $w_Y(\gamma+jq\epsilon^*+\rho_{\frak{k}})-\rho_{\frak{k}}=w_Y(\varphi)+(-t- jq-c)\epsilon^*$ since $w_{\frak{k}}^0(\epsilon^*)=-\epsilon^*$, where $\sum_{\beta\in \Delta_2}\beta=c\epsilon^*$ for some $c \in \mathbb{N}$. Define $m_j:=-t-jq-c$ for all $j\geq 0$. For $0\leq p_r\leq \cdots \leq p_1\leq m_j$, $E_{m_j\epsilon^*+p_1\gamma_1+\cdots+p_r\gamma_r}$ is a subrepresentation of $\res_{\frak{k}}(V_{m_j\epsilon^*})$, in view of Lemma \ref{subreps}. So by Lemma \ref{gammasubreps}, $E_{\varphi-m_j\epsilon^*-p_1\gamma_r-\cdots-p_r\gamma_1}$ is a subrepresentation of $\res_{\frak{k}}(V_{w_Y(\varphi)+m_j\epsilon^*})$ since $w_Y(\gamma_j)=-\gamma_{r+1-j}$, for all $1\leq j\leq r$ by Remark \ref{waction}(i). Now 
$ H^s(Y; \mathbb{E}_{\gamma+jq\epsilon^*})$ is isomorphic to $V_{w_Y(\varphi)+m_j\epsilon^*}$. So, for $0\leq p_r\leq \cdots \leq p_1\leq m_j$, $E_{\varphi-m_j\epsilon^*-p_1\gamma_r-\cdots-p_r\gamma_1}$ is an $L_0$-submodule of $ H^s(Y; \mathbb{E}_{\gamma+jq\epsilon^*})$.

 Fix $a_1\geq \cdots \geq a_r\geq 0$, where $a_1,\ldots ,a_r\in \mathbb{Z}$. In view of Remarks \ref{spin}(i) and \ref{qodd},  
%and \ref{waction}(iii), 
$q$ is odd when $c$ is odd. Let $\mathbb{N}'=\{j\in \mathbb{N}| (jq+c) \textrm{is even} \}$. There exists $j_0\in \mathbb{N}$ such that for all $j \in \mathbb{N}'$ with $j\geq j_0$, $-(jq+c)/2\geq a_1$. Define $p_{r+1-i}:=-(jq+c)/2-a_i, \ \ 1\leq i\leq r.$ %and inductively define $p_{r-i}:=(a_i-a_{i+1})+p_{r-i+1}$ for $1\leq i\leq {r-1}$. 
Then $0\leq p_r\leq \cdots \leq p_1 <m_j.$
%$p_1= 
% p_r+(p_{r-1}-p_r)+\cdots +(p_1-p_2)=
%p_r+(a_1-a_2)+\cdots+(a_{r-1}-a_r)=
%p_r+(a_1-a_r)\leq p_r+a_1=-(jq+c)/2=(m_j+t)/2<m_j$. 

Now $\sum_{1\leq i\leq r} p_i\gamma_{r+1-i} = \sum_{1\leq i\leq r} p_{r+1-i}\gamma_i = \sum_{1\leq i\leq r} (-a_i -(jq+c)/2)\gamma_i = (jq+c)\epsilon^*-\sum_{1\leq i\leq r} a_i\gamma_i$  
 %(since $p_{r+1-i}=a_{i-1}-a_i+p_{r-i+2}=a_{i-2}-a_i+p_{r-i+3}=\cdots =a_1-a_i+p_r=-a_i-(jq+c)/2$). 
 in view of Proposition \ref{sumgamma}(i), %$\sum_{1\leq i\leq r} \gamma_i =-2\epsilon^*$ 
since $w_{\frak{k}}^0(\epsilon)=-{\epsilon}$ by hypothesis. 
%Therefore $\gamma+\sum_{1\leq i\leq r} a_i\gamma_i =$ 
It follows that $\varphi - m_j\epsilon^* -\sum_{1\leq i \leq r} p_i\gamma_{r+1-i}=
%\varphi+(t+jq+c)\epsilon^*+
%\sum_{1\leq i\leq r}(a_i %+(jq+c)/2)\gamma_i=\varphi+(t+jq+c)\epsilon^*+\sum_{1\leq i\leq r} a_i\gamma_i+({jq+c}/2) \sum_{1\leq i\leq r}\gamma_i=\varphi+(t+jq+c)\epsilon^*+\sum_{1\leq i\leq r} a_i\gamma_i-(jq+c)\epsilon^*=
\gamma+\sum_{1\leq i\leq r} a_i\gamma_i$.
% by the choice of $0\leq p_r\leq \cdots \leq p_1< m_j$. 
So for all $j\in \mathbb{N}'$ with $j\geq j_0$, $E_{\gamma+a_1\gamma_1+\cdots+a_r\gamma_r}$ is an  $L_0$-submodule of $ H^s(Y; \mathbb{E}_{\gamma+jq\epsilon^*})$. That is, for all integers $a_1\geq \cdots \geq a_r\geq 0$, the $L_0$-type $E_{\gamma+a_1\gamma_1+\cdots+a_r\gamma_r}$ occurs in $\pi_{\gamma+\rho_{\frak{g}}}$ with infinite multiplicity.

In particular, if $\gamma=t\nu^*$, each $L_0$-type in $\pi_{\gamma+\rho_{\frak{k}}}$ occurs in $\pi_{\gamma+\rho_{\frak{g}}}$ with infinite multiplicity. This completes the proof. \hfill $\Box$

\section{Appendix~1:    Borel-de Siebenthal root orders.}\label{s:appendix1}
\newcommand\fund\varpi
\noindent
Fix notation as in~\S\ref{ss:bdsrorder}.
As in \cite{ow}, we shall follow Bourbaki's notation 
\cite{bourbaki} in labeling the simple roots of $\frak{g}$. 
%which determines a positive system of $\frak{g}$.   
Let $\Psi$ be the set of simple roots of a Borel-de Siebenthal
positive root system.
We point out the simple root which is non-compact for $\frak{g}_0$ and the  
compact Lie subalgebras $\frak{k}_1, \frak{l}_1, \frak{l}_2=\frak{k}_2\subset \frak{k}_0$.  We also point out, based on Proposition \ref{relativeinvariants},  
whether the algebra $\mathcal{A}:=\mathcal{A}(\frak{u}_1,L)$  of
relative invariants is $\bc$ or $\bc[f]$. In the latter case we
indicate the value of $|f|$, the degree of~$f$.
The reader is referred to \cite{ow} for a more detailed analysis.

We also indicate the non-compact dual Hermitian symmetric space
$X:=Y^*$,  where $Y=K_0/L_0$.
In the non-quaternionic cases we point out 
whether or not $w^0_{\frak{k}}(\Delta_0)=\Delta_0$ 
(equivalently $w_{\frak{k}}^0(\epsilon)=-\epsilon$): for 
a proof see Proposition \ref{sumgamma}.  

\subsection{Table for quarternionic
  type}\label{ss:t:quart}
In all these cases, $\frak{k}_1=\frak{su}(2),
\frak{l}_1=\frak{so}(2)=i\mathbb{R}\nu^*$.  
Also $Y=\mathbb{P}^1$, $X=Y^*=SU(1,1)/U(1),$ the unit disk in $\bc$.  The condition $w^0_{\frak{k}}(\epsilon)=-\epsilon$ is trivially valid. 
%In this case $Y$ admits a spin structure. 
\ \vspace{1mm} \\
%\begin{figure*}[h]%\label{ftableclassicaltype}
\begin{tabular}{ccccc}
\toprule
\addlinespace[5pt]
$\frak{g}_0$ & 
%\multirow{3}{1.5cm}{\begin{tabular}{c}\addlinespace[3pt]Type\\ of\\
%$\frak{g}$\end{tabular}}
%\textrm{type of~$\frak{g}$} 
Type of $\frak{g}$
& $\nu$ & $\frak{l}_2$ & $\mathcal{A}$ \\
\ignore{ %%%% ignoring
   &\multicolumn{3}{c}{Classical type fundamental weights}
                &\multirow{3}{1.7cm}{\begin{tabular}{c}\addlinespace[1pt]
                                Total\\ number\end{tabular}}\\
    \cmidrule(r){2-4}
     &\multicolumn{2}{c}{Quasi-minuscule}
            &\multirow{2}{2cm}{\begin{tabular}{c}\ \ Others\end{tabular}}  & \\
    \cmidrule(r){2-3}
    & Minuscule & Non-minuscule &           & \\
}%%%%%%%%%%% End of \ignore
\midrule
\addlinespace[5pt]
$\frak{g}_0=\frak{so}(4,2l-3), l>2$  &  $B_l$ %$\frak{so}(2l+1)$ 
& $\psi_2$ &
$\frak{sp}(1)\oplus \frak{so}(2l-3)$ & $\bc[f], 
|f|=4$   \\
\addlinespace[5pt]
$\frak{so}(4,1)$ & $B_2$ %$\frak{so}(5)$ 
& $\psi_2$ & $\frak{sp}(1)$ &
$\bc$ \\
\addlinespace[5pt]
$\frak{sp}(1,l-1), l>1$ &  $C_l$ %$\frak{sp}(l)$ 
& $\psi_1$ & $\frak{sp}(l-1)$ &
$\bc$\\
\addlinespace[5pt]
$\frak{so}(4, 2l-4), l>4$ & $D_l$ %$\frak{so}(2l)$ 
& $\psi_2$ & $\frak{sp}(1)\oplus
\frak{so}(2l-4)$ & $\bc[f], |f|=4$ \\
\addlinespace[5pt]
$\frak{so}(4,4)$ & $D_4$ %$\frak{so}(8)$ 
& $\psi_2$ & $\frak{sp}(1)\oplus
\frak{sp}(1)\oplus \frak{sp}(1)$ & $\bc[f], |f|=4$\\
\addlinespace[5pt]
$\frak{g}_{2; A_1,A_1}$  %the split real form of the exceptional Lie algebra of type $G_2$. 
& $G_2$ %$\frak{g}_2$ 
& $\psi_2$ & $\frak{sp}(1)$  & $\bc[f], |f|=4$\\
\addlinespace[5pt]
$\frak{f}_{4;A_1,C_3}$  %the split  real form of the exceptional Lie algebra of type $F_4$.   
& $F_4$ %$\frak{f}_4$ 
& $\psi_1$ &$\frak{sp}(3)$  &  $\bc[f], |f|=4$\\
\addlinespace[5pt]
$\frak{e}_{6;A_1,A_5,2}$ &  $E_6$ %$\frak{e}_6$ 
& $\psi_2$ & $\frak{su}(6)$ &
$\bc[f], |f|=4$ \\
\addlinespace[5pt]
$\frak{e}_{7;A_1, D_6,1}$ & $E_7$ %$\frak{e}_7$ 
&  $\psi_1$ & $\frak{so}(12)$
& $\bc[f], |f|=4$\\
\addlinespace[5pt]
$\frak{e}_{8; A_1, E_7}$ &  $E_8$ %$\frak{e}_8$ 
& $\psi_8$ & the compact form of   %\frak{l}_2^{\mathbb{C}}=\frak{e}_7. \ \ 
$\frak{e}_7$ & $\bc[f], |f|=4$\\
\addlinespace[5pt]
\bottomrule
\end{tabular}
%\caption{Table of Borel de Siebenthal root orders of quarternionic type}\label{ftabquart}
%\end{figure}

\subsection{Table for the non-quarternionic
  type}\label{ss:t:nquart}
The non-quarternionic type Borel-de
Siebenthal root orders are listed in the following table.    The condition that
$w_{\frak{k}}^0(\epsilon)=-\epsilon$ holds precisely in the following
cases (in the others it does not):
In the first case (when  $\frak{g}_0=\frak{so}(2p, 2l-2p+1)$ with
$2<p<l, l> 3$) if and only if $p$ is even;  in the second ($\frak{g}_0=\frak{so}(2l,1), l>2$)
if and only if $l$ is even;  in the third
($\frak{g}_0=\frak{sp}(p, l-p)$, $l> 2, 1<p<l$);  
in the fourth ($\frak{g}_0=\frak{so}(2l-4, 4), l> 4$) if and only if $l$ is even;
in the fifth ($\frak{g}_0=\frak{so}(2p,2l-2p), 2<p<l-2, l>5$)
if and only if $p$ is even;  in the sixth ($\frak{g}_0=\frak{f}_{4;B_4}$);
in the eighth ($\frak{g}_0=\frak{e}_{7;A_1,D_6,2}$);   and in the tenth
($\frak{g}_0=\frak{e}_{8;D_8}$).
\begin{landscape}
%\begin{figure}[ht]%\label{ftableclassicaltype}
\begin{tabular}{ccccccc}
\toprule
\addlinespace[5pt]
$\frak{g}_0$ & 
%\multirow{3}{1.5cm}{\begin{tabular}{c}\addlinespace[3pt]Type\\ of\\
%$\frak{g}$\end{tabular}}
%\textrm{type of~$\frak{g}$} 
%$\frak{g}$& 
$\nu$ & $\frak{k}_1$ & $\frak{l}_1$ & %$\frak{l}_2$ & 
$Y$ & $X$ &
%\begin{tabular}{c} If and when \\
%  $w_{\frak{k}}^0(\epsilon)=-\epsilon$\end{tabular} 
\begin{tabular}{c} $\mathcal{A}=\bc[f]$ as indicated \\
otherwise it is $\bc$\end{tabular}\\
\ignore{ %%%% ignoring
   &\multicolumn{3}{c}{Classical type fundamental weights}
                &\multirow{3}{1.7cm}{\begin{tabular}{c}\addlinespace[1pt]
                                Total\\ number\end{tabular}}\\
    \cmidrule(r){2-4}
     &\multicolumn{2}{c}{Quasi-minuscule}
            &\multirow{2}{2cm}{\begin{tabular}{c}\ \ Others\end{tabular}}  & \\

    \cmidrule(r){2-3}
    & Minuscule & Non-minuscule &           & \\
}%%%%%%%%%%% End of \ignore
\midrule
\addlinespace[5pt]
\begin{tabular}{c}$\frak{so}(2p, 2l-2p+1)$ \\ $2<p<l$, $l>
  3$\end{tabular} & %$\frak{so}(2l+1)$ &  
$\psi_p$ &
$\frak{so}(2p)$ & $\frak{u}(p)$ &  %$\frak{s0}(2l-2p+1)$ & 
$\frac{SO(2p)}{U(p)}$
& $\frac{SO^*(2p)}{U(p)}$ &  %$\Leftrightarrow$ $p$ is even & 
$|f|=2p$ for $3p\leq2l+1$\\
%\begin{tabular}{c}$\bc[f]$ (with $\deg(f)=2p$) \\$\Leftrightarrow 3p\leq 2l+1$\end{tabular} \\
\addlinespace[5pt]
$\frak{so}(2l,1), l>2$ & %$B_l$ 
$\psi_l$ & $\frak{so}(2l)$ &
$\frak{u}(l)$  &  %& 
$\frac{SO(2l)}{U(l)}$ & $\frac{SO^*(2l)}{U(l)}$ &
%if and only if $l$ is even &  
\\ %$\bc$ \\
\addlinespace[5pt] 
\begin{tabular}{c}$\frak{sp}(p, l-p)$\\ $l> 2$, $1<p<l$\end{tabular} &
% $\frak{g}$ is of type $C_l$, 
$\psi_p$ & $\frak{sp}(p)$ &$\frak{u}(p)$ & %$\frak{sp}(l-p)$ &
$\frac{Sp(p)}{U(p)}$ &  $\frac{Sp(p,\mathbb{R})}{U(p)}$ &\ \ 
%w^0_{\frak{k}}(\epsilon)=-\epsilon$. 
\begin{tabular}{c}$|f|=p$ for $p$ even \\ such that $3p\leq 2l$\end{tabular}\\
%\begin{tabular}{c}$\bc[f]$ (with $\deg(f)=p$) \\
%$\Leftrightarrow$ $3p\leq 2l$ and $p$ even.\end{tabular}\\
\addlinespace[5pt]
\begin{tabular}{c}$\frak{so}(2l-4, 4)$ \\ $l> 4$ \end{tabular} & %$\frak{g}$ is of type $D_l$, 
$\psi_{l-2}$ & $\frak{so}(2l-4)$ & $\frak{u}(l-2)$ &
%$\frak{su}(2)\oplus \frak{su}(2)$ & 
$\frac{SO(2l-4)}{U(l-2)}$ &
$\frac{SO^*(2l-4)}{U(l-2)}$ &% \ \ w^0_{\frak{k}}(\epsilon)=-\epsilon$ 
%$\Leftrightarrow$ $l$ is even  & 
\begin{tabular}{c}  %$\bc$ if $l>6$ \\
  %$\bc[f]$, $\deg(f)=8$ if $l=6$ \\
%$\bc[f]$, $\deg(f)=6$ if $l=5$\end{tabular}
$|f|=6$ if $l=5$\\
$|f|=8$ if $l=6$
\end{tabular}\\
\begin{tabular}{c}$\frak{so}(2p,2l-2p)$ \\ $2<p<l-2$ \\
  $l>5$ \end{tabular}
%&  $\frak{so}(2l)$ 
&  $\psi_p$ &  $\frak{so}(2p)$ & 
$\frak{u}(p)$  & %$\frak{so}(2l-2p)$ & 
$\frac{SO(2p)}{U(p)}$ &
$\frac{SO^*(2p)}{U(p)}$  &     
%\ \ w^0_{\frak{k}}(\epsilon)=-\epsilon$ 
%$\Leftrightarrow$ $p$ is even &
% \begin{tabular}{c}$\bc[f]$ (with $\deg(f)=2p$)\\ if and only if
% $3p\leq 2l$\end{tabular}\\
$|f|=2p$ for $3p\leq 2l$\\
\addlinespace[5pt] 
$\frak{f}_{4;B_4}$ & %$\frak{f}_4$ &  
%having $\frak{k}_0\cong
                                %\frak{so}(9)$ as a maximal compactly
                                %embedded subalgebra. 
$\psi_4$  & $\frak{k}_0=\frak{so}(9)$  &  $i\mathbb{R}\nu^*\oplus
\frak{so}(7)$  & % & 
$\frac{SO(9)}{SO(7)\times SO(2)}$ &
$\frac{SO_0(2,7)}{SO(2)\times SO(7)}$ & %Yes & %\ \
                                %w^0_{\frak{k}}(\epsilon)=-\epsilon$.  
%\begin{tabular}{c}$\bc[f]$\\  $\deg(f)=2$ \end{tabular}\\
$|f|=2$ \\
\addlinespace[5pt]
$\frak{e}_{6;A_1,A_5,1}$  & %$\frak{e}_6$  & 
$\psi_3$  & $\frak{su}(6)$ &
$\frak{su}(5)\oplus i\mathbb{R}\nu^*$  &  %$\frak{su}(2)$  &
$\mathbb{P}^5$ &
$\frac{SU(1,5)}{S(U(1)\times U(5)}$ & %Yes & %\ \
                                %w^0_{\frak{k}}(\epsilon)\neq-\epsilon$.  
%$\bc$. 
\\
\addlinespace[5pt]
$\frak{e}_{7;A_1,D_6,2}$   &   %$\frak{e}_7$  & 
$\psi_6$   &
$\frak{so}(12)$   & 
$\frak{so}(10)\oplus i\mathbb{R}\nu^*$   & %$\frak{sp}(1)$   &
$\frac{SO(12)}{SO(2)\times SO(10)}$   &   $\frac{SO_0(2,
  10)}{(SO(2)\times SO(10)}$  & %Yes & 
%\ \ w^0_{\frak{k}}(\epsilon)=-\epsilon$ 
%$\bc$
\\
\addlinespace[5pt]
$\frak{e}_{7;A_7}$   &
%$\frak{e}_7$  & 
$\psi_2$   & $\frak{k}_0=\frak{su}(8)$   &
$\frak{su}(7)\oplus i\mathbb{R}\nu^*$   &   %&  
$\mathbb{P}^7$ & 
$\frac{SU(1,7)}{S(U(1)\times U(7)}$   &  %No &  %\ \
                                %w^0_{\frak{k}}(\epsilon)\neq-\epsilon$.   
%\begin{tabular}{c}$\bc[f]$ \\ $\deg(f)=7$ \end{tabular}\\
$|f|=7$\\
\addlinespace[5pt]
$\frak{e}_{8;D_8}$  &  %$\frak{e}_8$ & %, a real form of $\frak{e}_8$ 
$\psi_1$ & $\frak{k}_0=\frak{so}(16)$   &
$i\mathbb{R}\nu^*\oplus\frak{so}(14)$ &  %&
$\frac{SO(16)}{SO(2)\times SO(14)}$  &  $\frac{SO_0(2,14)}{SO(2)\times
  SO(14)}$  & %Yes    & %\ \ w^0_{\frak{k}}(\epsilon)=-\epsilon$.  
%\begin{tabular}{c}$\bc[f]$\\ $\deg(f)=8$\end{tabular} \\
$|f|=8$\\
%$Y$ admits a spin structure.
\bottomrule
\end{tabular}
%\caption{Table of Borel de Siebenthal root orders of non-quarternionic type}\label{ftabquart}
%\end{figure}
\end{landscape}
\section{Appendix~2:  A description of some results of Parthasarathy} 
\label{s:appendixrp}
We briefly give a description of Parthasarathy's \cite{parthasarathy} 
results on construction of certain unitarizable 
$(\frak{g},K_0)$-modules and explain how to obtain the  
description of Borel-de Siebenthal discrete series 
due to \O rsted and Wolf as Borel-de Siebenthal discrete 
series are not explicitly treated in \cite{parthasarathy}.

Let $G_0$ be a non-compact real semisimple Lie group $G_0$ with finite centre and 
let $K_0$ be a maximal compact subgroup of $G_0.$ Assume that $G_0$ contains a compact Cartan subgroup $T_0\subset K_0$.  
Let $P$ be a positive root system of $(\frak{g},\frak{t})$ and let $\frak{p}_+$ (resp. $\frak{p}_-$) equal $\sum \frak{g}_\alpha$ where the sum is over positive (respectively negative) non-compact roots. Suppose that   
$[\frak{p}_+,[\frak{p}_+,\frak{p}_+]]=0$.  
Let $B$ denote the Borel subgroup of $K=K_0^\mathbb{C}$ such that $Lie(B)=\frak{t}\oplus\sum \frak{g}_\alpha$ where the sum is over positive compact roots.    Let $P_{\frak{k}}$ and $P_n$ denote the set of compact and non-compact roots in $P$ respectively. 

Write $\rho=(1/2)\sum_{\alpha\in P}\alpha$ 
and $w_{\frak{k}}, w_{\frak{g}}$ the longest element 
of the Weyl groups of $\frak{k}$ and $\frak{g}$ with respect to 
the positive systems $P_{\frak{k}}$ and $P$ respectively. 
Let $\lambda$ be the highest weight of an irreducible representation of $K_0$ such that the following ``regularity" conditions hold: (i) $\lambda+\rho$ is dominant for $\frak{g}$, and, (ii) $H^j(K/B;\Lambda^q(\frak{p}_-)\otimes \mathbb{L}_{\lambda+2\rho})=0$ for all $0\leq j<d, 0\leq q\leq \dim\frak{p}_-$ where $d:=\dim_\bc K/B$ and $\mathbb{L}_\varpi$ denotes the holomorphic line bundle over $K/B$ associated 
to a character $\varpi$ of $T$ extended to a character of $B$ in the usual way.  From \cite[Lemma 9.1]{hp} we see that  
condition (ii) holds for $\lambda$ since $[\frak{p}_+,[\frak{p}_+,\frak{p}_+]]=0$. 
Parthasarathy shows that the $\frak{k}$-module structure on 
$\oplus _{m\geq 0}H^d(K/B;
\mathbb{L}_{\lambda+2\rho}\otimes S^m(\frak{p}_+))$ extends to a $\frak{g}$-module structure which is unitarizable. 

Suppose that $\lambda+\rho$ is regular dominant for $\frak{g}$ so that condition (i) holds. 
Then, 
the $\frak{g}$-module $\oplus_{m\geq 0} H^d(K/B;\mathbb{L}_{\lambda+2\rho}\otimes S^m(\frak{p}_+))$ is the $K_0$-finite part of a discrete series representation $\pi$ with Harish-Chandra parameter $\lambda+\rho$ and Harish-Chandra root order $P$.
The Blattner parameter is $\lambda+2\rho_n$. See \cite[p.3-4]{parthasarathy}.

Now start with a Borel-de Siebenthal positive system $\Delta^+$ where $G_0$ is further assumed to be 
simply-connected and simple. Assume also that $G_0/K_0$ is not Hermitian symmetric. The Harish-Chandra root order for the  Borel-de Siebenthal discrete series $\pi_{\gamma+\rho_\frak{g}}$ is 
$\Delta^+_0\cup\Delta_{-1}\cup \Delta_{-2}$. The 
Blattner parameter for $\pi_{\gamma+\rho_\frak{g}}$ is $\gamma+\sum_{\beta\in \Delta_2}\beta$.    
Thus, setting $P:= \Delta_0^+\cup\Delta_{-1}\cup  \Delta_{-2}$, we have $P_n=\Delta_{-1}$, $\frak{p}_+=\frak{u}_{-1}$ and 
$[\frak{p}_+,[\frak{p}_+, \frak{p}_+]]=0$ holds.

 Finally, we have the isomorphism \cite[equation (9.20)]{parthasarathy} \[H^d(K/B;\mathbb{L}_{\lambda+2\rho}\otimes 
\mathbb{S}^m(\frak{p}_+))
\cong H^s(Y;\mathbb{E}_{\lambda+2\rho_n}\otimes \mathbb{E}_{\kappa}\otimes \mathbb{S}^m(\frak{p}_+))\] of $K$-representations where $\kappa=
\sum_{\beta\in \Delta_{-2}}\beta.$ Note that $\mathbb{E}_\kappa$ is the canonical line bundle of $Y$. 
From Parthasarathy's description of the $K_0$-finite part of the discrete series $\pi_{\lambda+\rho}$ and using the above isomorphism we have  
\[\begin{array}{rcl} (\pi_{\lambda+\rho})_{K_0}&=&\bigoplus_{m\geq 0} H^d(K/B;\mathbb{L}_{\lambda+2\rho}\otimes 
\mathbb{S}^m(\frak{p}_+))\\
& \cong & \bigoplus_{m\geq 0}H^s(Y;\mathbb{E}_{\lambda+2\rho_n}\otimes \mathbb{E}_{\kappa}\otimes \mathbb{S}^m(\frak{p}_+))\\
& =& \bigoplus_{m\geq 0} H^s(Y;\mathbb{E}_{\lambda+2\rho_n+\kappa}\otimes \mathbb{S}^m(\frak{p}_+))\\
&= & \bigoplus_{m\geq 0} H^s(Y;\mathbb{E}_{\gamma}\otimes \mathbb{S}^m(\frak{u}_{-1}))\\
\end{array}\]
where $\gamma:= \lambda+2\rho_n+\kappa$. Note that 
$\gamma+\rho_\frak{g}=\lambda+2\rho_n+\kappa+
\rho_\frak{g}=\lambda+\rho$. Therefore, by \cite{ow}, the module in the last line 
is the $K_0$-finite part of $\pi_{\gamma+\rho_\frak{g}}$. Hence we see that 
Parthasarathy's description of $(\pi_{\gamma+\rho_{\frak{g}}})_{K_0}$ agrees with 
that of \O rsted and Wolf.


\begin{thebibliography}{99}
\bibitem{bh} Borel, A.; Hirzebruch, F.
Characteristic classes and homogeneous spaces. I.
Amer. J. Math. {\bf 80} (1958) 458--538. 
\bibitem{bourbaki} Bourbaki, N. {\it Groupes et alg\'ebres de Lie. Chapitre IV-VII.\/} Hermann, Paris 1968.
\bibitem{hc1} Harish-Chandra Representations of semisimple Lie groups. VI. Integrable and square-integrable representations. Amer. J. Math. 78 (1956), 564--628.
\bibitem{hc} Harish-Chandra Discrete series for semisimple Lie groups. II. Explicit determination of the characters. Acta Math. {\bf 116} (1966) 1--111. 
\bibitem{helgason} Helgason, Sigurdur {\it Differential geometry, Lie groups, and symmetric spaces.} Corrected reprint of the 1978 original. Graduate Studies in Mathematics, {\bf 34}. American Mathematical Society, Providence, RI, 2001.
\bibitem{hp} Hotta, R.; Parthasarathy, P. Multiplicity formulae for discrete series, Invent. Math. {\bf 26} (1974) 133--178.
\bibitem{knapp} Knapp, Anthony W. {\it Representation theory of semisimple groups. An overview based on examples.} Reprint of the 1986 original. Princeton Landmarks in Mathematics. Princeton University Press, Princeton, NJ, 2001.
\bibitem{knapp2} Knapp, Anthony W. {\it Lie groups beyond an introduction.} Second edition. Progress in Mathematics, {\bf 140}. Birkh\"auser Boston, Inc., Boston, MA, 2002.
\bibitem{kobayashi94}
Kobayashi, Toshiyuki Discrete decomposability of the restriction of $A_{\frak{q}}(\lambda)$ with respect to reductive subgroups and its applications. Invent. Math. {\bf 117} (1994) 181--205.
\bibitem{kobannals} Kobayashi, Toshiyuki   Discrete decomposability of
  the restriction of~$A_\frak{q}(\lambda)$ with respect to reductive
  subgroups~II: Microlocaly analysis and asymptotic $K$-support.
Annals of Mathematics Vol {\bf 147}, (1998), 709--729.  
\bibitem{kobinv98} Kobayashi, Toshiyuki  Discrete decomposability of the restriction of $A_q(\lambda)$ with respect to reductive subgroups. III. Restriction of Harish-Chandra modules and associated varieties. Invent. Math. {\bf 131} (1998), no. 2, 229--256.
\bibitem{kobayashipm} Kobayashi, Toshiyuki Restrictions of unitary representations of real reductive groups. {\it Lie theory\/},  Progress in Mathematics Vol {\bf 229},  pp.~139--207, Birkh\"auser, Boston.
\bibitem{littspherical} Littelmann, Peter On spherical double cones.  Journal of Algebra {\bf 166} (1994), 142--157.
\bibitem{littelmann} Littelmann, Peter A Littlewood-Richardson rule for symmetrizable Kac-Moody algebras. Invent. Math. {\bf 116} (1994),  329--346. 
\bibitem{littelmann95} Littelmann, Peter Paths and root operators in representation theory. Ann. of Math. (2) {\bf 142} (1995), 499--525.
\bibitem{ow} \O rsted, Bent; Wolf, Joseph A.
Geometry of the Borel-de Siebenthal discrete series. 
J. Lie Theory {\bf 20} (2010), no. 1, 175--212. 
\bibitem{parthasarathy} Parthasarathy, R. An algebraic construction of a class of representations of a semi-simple Lie algebra. Math. Ann. {\bf 226} (1977), 1--52. 
\bibitem{prs:cr} Paul, Pampa; Raghavan.~K.~N.; Sankaran, Parameswaran $L_0$-types common to a Borel-de Siebenthal discrete series and its associated holomorphic discrete series. C. R. Acad. Sci. Pairs. {\bf 350} (2012), 1007--1009.
\bibitem{repka} Repka, Joe Tensor products of holomorphic discrete series representations. Canad. J. Math. {\bf 31} (1979), 836--844.
\bibitem{sk} Sato, M.; Kimura, T.  A classification of irreducible prehomogeneous vector spaces and their relative invariants. Nagoya Math. J. {\bf 65} (1977), 1--155.
\bibitem{schmid} Schmid, Wilfried Die Randwerte holomorpher Funktionen auf hermitesche symmetrischen R\"{a}umen. Invent. Math. {\bf 9} (1969/1970) 61--80.
\end{thebibliography}
\end{document}